\newtheorem{theorem}{Theorem}[section]
\numberwithin{equation}{section}
\newtheorem{definition}[theorem]{Definition}
\newtheorem{corollary}[theorem]{Corollary}
\newtheorem{remark}[theorem]{Remark}
\newtheorem{lemma}[theorem]{Lemma}
\titleformat{\section}{\normalfont\scshape\centering}{\thesection.}{0.5em}{}
\titleformat*{\subsection}{\itshape}
\titleformat*{\subsubsection}{\itshape}
\providecommand{\keywords}[1]
{
	{\small\emph{Keywords:} #1}
}
\providecommand{\MSC}[1]
{
	{\small\emph{AMS MSC (2020):~~} #1}
}
\definecolor{denim}{rgb}{0.08, 0.38, 0.74}
\definecolor{byzantium}{rgb}{0.44, 0.16, 0.39} 
\definecolor{shamrockgreen}{rgb}{0.0, 0.62, 0.38} 
\providecommand{\jumptmp}[2]{#1\llbracket{#2}#1\rrbracket}
\providecommand{\jump}[1]{\jumptmp{}{#1}}
\begin{document}
	\setlength{\abovedisplayskip}{5.5pt}
	\setlength{\belowdisplayskip}{5.5pt}
	\setlength{\abovedisplayshortskip}{5.5pt}
	\setlength{\belowdisplayshortskip}{5.5pt}

	\title{\vspace{-15mm}Convection Effects and Optimal Insulation: \\ Modelling and Analysis\thanks{This work is partially supported by the Office of Naval Research (ONR) under Award NO: N00014-24-1-2147, NSF grant DMS-2408877, the Air Force Office of Scientific Research (AFOSR) under Award NO: FA9550-25-1-0231.}}\vspace{-1.5mm}
	\author[1]{Harbir Antil\thanks{Email: \url{hantil@gmu.edu}}}
	\author[2]{Alex Kaltenbach\thanks{Email: \url{kaltenbach@math.tu-berlin.de}}}
	\author[3]{Keegan L.\ A.\ Kirk\thanks{Email: \url{kkirk6@gmu.edu}}\vspace{-1.5mm}}
	\date{\today\vspace{-2.5mm}} 
	\affil[1,3]{\small{Department of Mathematical Sciences and the Center for Mathematics and Artificial Intelligence (CMAI), George Mason University, Fairfax, VA 22030, USA.}}
	\affil[2]{\small{Institute of Mathematics, Technical University of Berlin, Stra\ss e des 17.\ Juni 136, 10623 Berlin}\vspace{-2.5mm}}
	\maketitle

	\pagestyle{fancy}
	\fancyhf{}
	\fancyheadoffset{0cm}
	\addtolength{\headheight}{-0.25cm}
	\renewcommand{\headrulewidth}{0pt} 
	\renewcommand{\footrulewidth}{0pt}
	\fancyhead[CO]{\textsc{Modelling of an optimal insulation problem}}
	\fancyhead[CE]{\textsc{H. Antil, A. Kaltenbach, and K.  Kirk}}
	\fancyhead[R]{\thepage}
	\fancyfoot[R]{}
	
	\begin{abstract}
		In this paper, we study an insulation problem that seeks to determine the optimal~distri\-bution of a given amount $m>0$ of insulating material coating an insulated boundary part $\Gamma_I\subseteq \partial\Omega$ of a thermally conducting body $\Omega\subseteq \mathbb{R}^d$, $d\in \mathbb{N}$, subject to convective heat transfer.  
        The \textit{`thickness'} of the insulating layer $\Sigma_{I}^{\varepsilon}\subseteq \mathbb{R}^d$ is given locally via $\varepsilon \mathtt{d}$, where $\varepsilon>0$ denotes the (arbitrarily small) conductivity and $\mathtt{d}\colon \Gamma_{I}\to [0,+\infty)$ the (to be determined) distribution of the  insulating material.
        Then, the physical process is modelled by the~stationary heat equation in the insulated thermally conducting body $\Omega_{I}^{\varepsilon}\coloneqq \Omega\cup\Sigma_{I}^{\varepsilon}$ 
        with Robin-type boundary conditions on the interacting insulation boundary $\Gamma_I^{\varepsilon}\subseteq \partial\Omega_{I}^{\varepsilon}$ (reflecting convective heat transfer  between the thermally conducting body $\Omega$ and its surrounding medium)~as~well~as Dirichlet and Neumann boundary conditions at the remaining boundary parts, \textit{i.e.},  $\partial\Omega_{I}^{\varepsilon}\setminus \Gamma_I^{\varepsilon}$.
        
        More \hspace{-0.1mm}precisely, \hspace{-0.1mm}we \hspace{-0.1mm}establish \hspace{-0.1mm}$\Gamma(L^2(\mathbb{R}^d))$-convergence \hspace{-0.1mm}of \hspace{-0.1mm}the \hspace{-0.1mm}heat \hspace{-0.1mm}loss \hspace{-0.1mm}formulation~(as~\hspace{-0.1mm}${\varepsilon\hspace{-0.175em}\to\hspace{-0.175em} 0^+}$), in the case that the thermally conducting body $\Omega$ is a bounded Lipschitz domain having a $C^{1,1}$-regular or piece-wise flat insulated boundary $\Gamma_I$.
	\end{abstract}
	
	\keywords{optimal insulation; Lipschitz domain; transversal vector field;  heat convection; Robin boundary condition; $\Gamma$-convergence}
	
	\MSC{35J25; 35Q93; 49J45; 80A20}
	
	\section{Introduction}\thispagestyle{empty}

    \hspace{5mm}The control of heat exchange between a thermally conducting body and its surrounding medium plays a  critical role in many industrial applications spanning almost all fields~of~engineering. Some examples include the design of energy-efficient buildings,  shielding of sensitive components in electronics and machinery, and  protection of passengers and crew~during~air-~and~spacecraft~travel.
    Often, this control is achieved passively through thermal insulation. If only a limited budget of~\mbox{insulating}~\mbox{material}~is~\mbox{allowable}, as is the case when there are strict size or mass constraints on the design, the problem of its optimal distribution becomes a question of both theoretical and practical significance
    (see \cite{Claesson1980,DellaPietraOliva2025}). In \cite{Buttazzo1988}, such an optimization problem is studied under the assumption that thermal conduction is the only mechanism of heat transfer at the body's surface. 
    However, this precludes many important applications, particularly in aerospace and aeronautic engineering, where the dominant mechanism of heat transfer may be convection, radiation, or some combination thereof. 
    In the case where convection is the dominant heat transfer mechanism, Robin-type boundary conditions provide a natural mathematical model for the underlying physics (\textit{cf}.\ \cite{NASAthermalGuidebook}).

    \if0
    \hspace{5mm}Applications ranging from energy-efficient building design to thermal protection in industrial machinery, air- and spacecrafts, as well as space systems share a common~\mbox{challenge:}~\mbox{optimally}~controlling \hspace{-0.1mm}heat \hspace{-0.1mm}exchange \hspace{-0.1mm}between \hspace{-0.1mm}a \hspace{-0.1mm}thermally \hspace{-0.1mm}conducting \hspace{-0.1mm}body \hspace{-0.1mm}and \hspace{-0.1mm}its \hspace{-0.1mm}surrounding~\hspace{-0.1mm}medium~\hspace{-0.1mm}(\textit{cf}.~\hspace{-0.1mm}\cite{ASHRAE2017,NASAthermalGuidebook}). In climate-sensitive infrastructure, minimizing heat loss is crucial for reducing energy demand and associated emissions (\textit{cf}.\  \cite{EnergyPlusRef}). In aerospace and space engineering, thermal insulation~must~also prevent~excessive heating from external heat sources, such as aerodynamic heating during re-entry or solar heat radiation in orbit (\textit{cf}.\  \cite{Tauber1989,Mars2020TPS}). In many such scenarios, convection dominates the heat transfer mechanism and Robin-type boundary conditions provide a natural mathematical model for the underlying physics (\textit{cf}.\ \cite{NASAthermalGuidebook}). If only a limited amount of~\mbox{insulating}~\mbox{material}~is~\mbox{available}, the problem of its optimal distribution on selected boundary portions becomes a question of both theoretical  and practical significance (\textit{cf}.\ \cite{Claesson1980,DellaPietraOliva2025}). In this paper, inspired~by~(\textit{cf}.~\cite{PietraNitschScalaTrombetti2021,DellaPietraOliva2025,AcamporaCristoforoniNitschTrombetti2024}), we formulate this optimization problem within a PDE-based shape optimization framework,  with a focus on the interplay between the governing Robin boundary condition~and~the~(to~be~determined) distribution function describing the insulating layer; but extend previous contributions to the setting of partial insulation and non-smooth (piece-wise flat) boundary insulated~boundary~parts.\newpage\fi

    \subsection{Related contributions}\enlargethispage{5mm}

    \subsubsection{Optimal insulation of thermally conducting body under \underline{conductive} heat transfer}
    
    \hspace{5mm}The first PDE-based shape optimization framework for the optimal insulation of a thermally conducting body, when heat transfer with the environment is governed by conduction (that is, Dirichlet boundary conditions are imposed on the boundary of the insulated body), was proposed by Buttazzo (\textit{cf.}~\cite{Buttazzo1988,Buttazzo1988c}).   In this setting, one considers a bounded domain $\Omega \subseteq \mathbb{R}^d$, $d \in \mathbb{N}$, representing the \emph{thermally conducting body}, with material-specific \emph{thermal conductivity} $\lambda > 0$ and \emph{heat source density} $f \in L^2(\Omega)$. An \emph{insulating layer} $\Sigma_\varepsilon \subseteq \mathbb{R}^d\setminus \Omega$ is placed around the body, satisfying $\partial \Omega \subseteq \partial \Sigma_\varepsilon$. The layer has local thickness $\varepsilon \mathtt{d}$, where $\varepsilon > 0$ is the thermal conductivity of the insulating material and $\mathtt{d} \colon \partial \Omega \to [0,+\infty)$ is a distribution function to be determined. The resulting insulated body is $\Omega_\varepsilon \coloneqq \Omega \cup \Sigma_\varepsilon$.  Then, one seeks to minimize the \emph{heat loss} functional $E_\varepsilon^{\mathtt{d}} \colon H_0^1(\Omega_\varepsilon) \to \mathbb{R}$, for every $v_\varepsilon \in H_0^1(\Omega_\varepsilon)$ defined by  
%
    \begin{align}\label{intro:E_vareps_h_conductive}
        \smash{E_\varepsilon^{\mathtt{d}}(v_\varepsilon)\coloneqq \tfrac{\lambda}{2}\|\nabla v_\varepsilon\|_{\Omega}^2+\tfrac{\varepsilon}{2}\|\nabla v_\varepsilon\|_{\Sigma_\varepsilon}^2-(f,v_\varepsilon)_{\Omega}}\,.
    \end{align} 
    The \hspace{-0.15mm}direct \hspace{-0.15mm}method \hspace{-0.15mm}in \hspace{-0.15mm}the \hspace{-0.15mm}calculus \hspace{-0.15mm}of \hspace{-0.15mm}variations \hspace{-0.15mm}yields 
   \hspace{-0.15mm}the \hspace{-0.15mm}existence \hspace{-0.15mm}of \hspace{-0.15mm}a \hspace{-0.15mm}unique \hspace{-0.15mm}minimizer~\hspace{-0.15mm}${u_\varepsilon^{\mathtt{d}}\hspace{-0.175em}\in \hspace{-0.175em}H^1_0(\Omega_\varepsilon)}$ to the heat loss functional \eqref{intro:E_vareps_h_conductive}, which formally satisfies the Euler--Lagrange equations
    \begin{subequations}\label{intro:ELE_Eepsh_conductive}
    \begin{align}\label{intro:ELE_Eepsh_conductive.1}
            -\lambda\Delta u_\varepsilon^{\mathtt{d}}&=f&&\quad \text{ a.e.\ in }\Omega\,,\\ 
            -\varepsilon\Delta u_\varepsilon^{\mathtt{d}}&=0&&\quad \text{ a.e.\ in }\Sigma_\varepsilon\,,\label{intro:ELE_Eepsh_conductive.2}\\ 
            u_\varepsilon^{\mathtt{d}}&=0&&\quad \text{ a.e.\ on }\partial\Omega_{\varepsilon}\,,\label{intro:ELE_Eepsh_conductive.3}\\ 
            \lambda\nabla (u_\varepsilon^{\mathtt{d}}|_{\Sigma_{\varepsilon}})\cdot n&=\varepsilon\nabla (u_\varepsilon^{\mathtt{d}}|_{\Omega})\cdot n&&\quad \text{ a.e.\ on } \partial\Omega\,,\label{intro:ELE_Eepsh_conductive.4} 
    \end{align}
    \end{subequations}
    where $n\colon \partial\Omega\to \mathbb{S}^{d-1}$ denotes the outward unit normal vector field to $\Omega$. 
    

    From the rich literature on asymptotic analysis (as $\varepsilon\to 0^+$) for the heat loss functional \eqref{intro:E_vareps_h_conductive} 
    (\textit{cf}.\  \cite{BrezisCaffarelliFriedman1980,CaffarelliFriedman1980,AcerbiButtazzo1986,AcerbiButtazzo1986b,ButtazzoKohn1987,ButtazzoDalMasoMosco1989,BoutkridaMossinoMoussa1999,BoutkridaGrenonMossinoMoussa2002,MossinoVanninathan2002,PietraNitschScalaTrombetti2021,AcamporaCristoforoniNitschTrombetti2024,AKK2025_modelling}), 
    we want point out the following two contributions:
    \begin{itemize}[noitemsep,topsep=2pt,leftmargin=!,labelwidth=\widthof{$\bullet$}]
        \item[$\bullet$] If $\partial\Omega\in C^{1,1}$, which is equivalent to 
        $n\in \smash{(C^{0,1}(\partial\Omega))^d}$ (\textit{cf}.\ Remark \ref{rem:examples}(\hyperlink{rem:examples.i}{i})),~given $\mathtt{d}\in  C^{0,1}(\partial\Omega)$ with $\mathtt{d}\ge \smash{\mathtt{d}_{\textup{min}}}$ a.e.\ on $\partial\Omega$, for some~$\smash{\mathtt{d}_{\textup{min}}}>0$, defining the  insulating~layer~via
    \begin{align}\label{intro:sigma_eps_conductive_smooth}
        \Sigma_\varepsilon\coloneqq \smash{\big\{s+ tn(s)\mid s\in \partial\Omega\,,\; t\in [0,\varepsilon \mathtt{d}(s))\big\}}\,,
    \end{align}
    {Acerbi} and {Buttazzo} (\textit{cf}.\  \cite[Thm. II.2]{AcerbiButtazzo1986}) proved that the limit functional (as $\varepsilon\to 0^{+}$)~of~\eqref{intro:E_vareps_h_conductive} (in the sense of $\Gamma(L^2(\mathbb{R}^d))$-convergence) is given via 
    $\smash{E}^\mathtt{d}\colon \hspace{-0.05em}H^1(\Omega)\hspace{-0.05em}\to\hspace{-0.05em} \mathbb{R}$, for every $v\hspace{-0.05em} \in \hspace{-0.05em}H^1(\Omega)$~\mbox{defined}~by
    \begin{align}\label{intro:E_h_smooth}
        \smash{E^{\mathtt{d}}(v)\coloneqq \tfrac{\lambda}{2}\|\nabla v\|_{\Omega}^2+\tfrac{1}{2}\|\smash{\mathtt{d}^{-\smash{\frac{1}{2}}}}v\|_{\partial\Omega}^2-(f,v)_{\Omega}}\,.
    \end{align} 
    \if0
        \todo{KK: This sentence is a bit hard to parse as written, so I tried to simplify it a bit below}
    The \hspace{-0.1mm}assumption \hspace{-0.1mm}$n\hspace{-0.15em}\in\hspace{-0.15em} \smash{(C^{0,1}(\partial\Omega))^d}$ \hspace{-0.1mm}ensures \hspace{-0.1mm}the \hspace{-0.1mm}existence \hspace{-0.1mm}of \hspace{-0.1mm}some \hspace{-0.1mm}$\varepsilon_0\hspace{-0.15em}>\hspace{-0.15em}0$ \hspace{-0.1mm}such \hspace{-0.1mm}that~\hspace{-0.1mm}for~\hspace{-0.1mm}\mbox{every}~\hspace{-0.1mm}${\varepsilon\hspace{-0.15em}\in\hspace{-0.15em} (0,\varepsilon_0)}$, 
    $\Phi_\varepsilon \colon D_\varepsilon\coloneqq\bigcup_{s\in \partial\Omega}{\{s\}\times [0,\varepsilon \mathtt{d}(s))}\to \Sigma_\varepsilon$, defined by $\Phi_\varepsilon(s,t)\coloneqq s+ tn(s)$ for all $(s,t)^\top\in D_\varepsilon$~,
    is bi-Lipschitz continuous. \todo{KK: by the latter, do you mean the bi-Lipschitz continuity?} The latter avoids gaps (\textit{i.e.}, no insulating material is attached)~or~self-intersections (\textit{i.e.}, insulating material is attached twice) in the insulating layer $\Sigma_\varepsilon$ (\textit{cf}.\ Figure~\ref{fig:transversality}).\fi 
    The \hspace{-0.1mm}assumption \hspace{-0.1mm}$n\hspace{-0.15em}\in\hspace{-0.15em} \smash{(C^{0,1}(\partial\Omega))^d}$ \hspace{-0.1mm}ensures for sufficiently small $\varepsilon > 0$, the mapping 
    $\Phi_\varepsilon \colon D_\varepsilon\coloneqq\bigcup_{s\in \partial\Omega}{\{s\}\times [0,\varepsilon \mathtt{d}(s))}\to \Sigma_\varepsilon$, defined by $\Phi_\varepsilon(s,t)\coloneqq s+ tn(s)$ for all $(s,t)^\top\in D_\varepsilon$, 
    is bi-Lipschitz continuous. \ As a consequence, there are no gaps 
    (\textit{i.e.}, insulation is applied everywhere)
    ~or~self-intersections  
    (\textit{i.e.}, insulation is applied only once)  
    in the insulating layer $\Sigma_\varepsilon$ (\textit{cf}.\ Figure~\ref{fig:transversality}). 
    %
     \item[$\bullet$] 
     
     \if0
     \HA{Assume that $\partial\Omega\in C^{0,1}$ is piecewise flat and that
$\mathtt{d}\in C^{0,1}(\partial\Omega)$ satisfies
$\mathtt{d}\ge \smash{\mathtt{d}_{\textup{min}}}$ a.e.\ on $\partial\Omega$
for some constant $\smash{\mathtt{d}_{\textup{min}}}>0$.  

The insulating layer is then defined by
\begin{align}\label{intro:sigma_eps2_conductive}
\Sigma_\varepsilon
\coloneqq
\smash{\big\{\, s + t k(s)
\;\big|\;
s\in \partial\Omega,\; t\in [0,\varepsilon \mathtt{d}(s)) \big\}},
\end{align}
where $k\in (C^{0,1}(\partial\Omega))^d$ is a unit-length, globally transversal vector field.  

Under these assumptions, the authors (\textit{cf.}\ \cite[Thm.\ 5.1]{AKK2025_modelling}) proved that the limit functional of
\eqref{intro:E_vareps_h_conductive}, as $\varepsilon\to 0^+$, in the sense of
$\Gamma(L^2(\mathbb{R}^d))$-convergence, is given by
$\smash{E}^\mathtt{d}\colon H^1(\Omega)\to \mathbb{R}$.  
For every $v \in H^1(\Omega)$, it is defined by
\begin{align}\label{intro:E_h}
\smash{E}^{\mathtt{d}}(v)
\coloneqq
\tfrac{\lambda}{2}\|\nabla v\|_{\Omega}^2
+\tfrac{1}{2}\|((k\cdot n)\smash{\mathtt{d})^{-\smash{\frac{1}{2}}}}v\|_{\partial\Omega}^2
-(f,v)_{\Omega}.
\end{align}
}\fi
     
      If $\partial\Omega\in C^{0,1}$ is piece-wise flat, given $\mathtt{d}\in  C^{0,1}(\partial\Omega)$ with $\mathtt{d}\ge \smash{\mathtt{d}_{\textup{min}}}$ a.e.\ on $\partial\Omega$,~for~some~$\smash{\mathtt{d}_{\textup{min}}}>0$, and a unit-length (globally) transversal 
      $k\in (C^{0,1}(\partial\Omega))^d$, defining the  insulating~layer~via
     \begin{align}\label{intro:sigma_eps2_conductive}
         \Sigma_\varepsilon\coloneqq \smash{\big\{s+ tk(s)\mid s\in \partial\Omega\,,\; t\in [0,\varepsilon \mathtt{d}(s))\big\}}\,,
     \end{align}
     the~authors~(\textit{cf}.\ \cite[Thm.\ 5.1]{AKK2025_modelling}) proved that the limit functional (as $\varepsilon\to 0^{+}$)~of~\eqref{intro:E_vareps_h_conductive} (in the sense of $\Gamma(L^2(\mathbb{R}^d))$-convergence) is given via 
     $\smash{E}^\mathtt{d}\colon H^1(\Omega)\to \mathbb{R}$, for every $v \in H^1(\Omega)$~defined~by
     \begin{align}\label{intro:E_h}
         \smash{E}^{\mathtt{d}}(v)\coloneqq \tfrac{\lambda}{2}\|\nabla v\|_{\Omega}^2+\tfrac{1}{2}\|((k\cdot n)\smash{\mathtt{d})^{-\smash{\frac{1}{2}}}}v\|_{\partial\Omega}^2-(f,v)_{\Omega}\,.
     \end{align}
    \end{itemize}
    Both if $\partial\Omega\hspace{-0.05em}\in\hspace{-0.05em} C^{1,1}$ (in which case, we set $k\hspace{-0.05em}=\hspace{-0.05em}n\hspace{-0.05em}\in\hspace{-0.05em} (C^{0,1}(\partial\Omega))^d$) and if $\partial\Omega\hspace{-0.05em}\in\hspace{-0.05em} C^{0,1}$~is~\mbox{piece-wise}~flat,
    a unique minimizer $u^{\mathtt{d}}\in H^1(\Omega)$ to the $\Gamma$-limit functional  \eqref{intro:E_h} (which, in the case $\partial\Omega\in C^{1,1}$ and $k=n\in (C^{0,1}(\partial\Omega))^d$, reduces to  \eqref{intro:E_h_smooth}) exists and 
    formally 
    satisfies the Euler--Lagrange~equations 
    \begin{align}\label{intro:ELE_Eh}
        \begin{aligned}
            -\lambda \Delta u^{\mathtt{d}}&=f&&\quad \text{ a.e.\ in }\Omega\,,\\[-0.5mm]
          \lambda (k\cdot n) \mathtt{d}\nabla u^{\mathtt{d}}\cdot n+u^{\mathtt{d}}&=0&&\quad \text{ a.e.\ on }\partial\Omega\,.
        \end{aligned}
    \end{align} \newpage

    \subsubsection{Optimal insulation of thermally conducting body under \underline{convective} heat transfer}\enlargethispage{7mm}\vspace{-0.5mm}
    
    \hspace{5mm}The first contribution proposing a PDE-based shape optimization framework for optimal~insula\-tion of a thermally conducting body, when heat transfer with the environment is dominated by convection (\textit{i.e.}, Robin boundary conditions are imposed at  boundary~of~the~\mbox{insulated}~body), was proposed by Della Pietra \textit{et al}.\ (\textit{cf}.\ \cite{PietraNitschScalaTrombetti2021}). Therein, given the setup~of~the~previous subsection and, in addition,  
    a system-specific \textit{heat transfer coefficient} $\beta>0$,  one seeks to minimize  the   \textit{heat loss} functional $\smash{E}_\varepsilon^\mathtt{d}\colon H^1(\Omega_\varepsilon)\to \mathbb{R}$, for every $v_\varepsilon\in H^1(\Omega_\varepsilon)$ defined by
     \if0
     \hspace{5mm}The first contribution proposing a PDE-based shape optimization framework for optimal~insula\-tion of a thermally conducting body, when heat transfer with the environment is conveyed~by convection (\textit{i.e.}, Robin boundary conditions are imposed at  boundary~of~the~\mbox{insulated}~body), was proposed by Della Pietra \textit{et al}.\ (\textit{cf}.\ \cite{PietraNitschScalaTrombetti2021}): in the latter, given the same setup of~the~\mbox{previous}~\mbox{subsection}\linebreak and, in addition,  an \textit{ambient temperature} $u_{\infty}\in H^1(\mathbb{R}^d\setminus \overline{\Omega})$ (of the surrounding~medium~in~$\mathbb{R}^d\setminus\overline{\Omega}$) and a system-specific \textit{heat transfer coefficient} $\beta>0$,  one seeks to minimize  the   \textit{heat loss} functional $\smash{E}_\varepsilon^\mathtt{d}\colon H^1(\Omega_\varepsilon)\to \mathbb{R}$, for every $v_\varepsilon\in H^1(\Omega_\varepsilon)$ defined by\fi
    \begin{align}\label{intro:E_vareps_h_convective}
        \smash{\smash{E}_\varepsilon^{\mathtt{d}}(v_\varepsilon)\coloneqq \tfrac{\lambda}{2}\|\nabla v_\varepsilon\|_{\Omega}^2+\tfrac{\varepsilon}{2}\|\nabla v_\varepsilon\|_{\Sigma_\varepsilon}^2+\tfrac{\beta}{2}\|v_\varepsilon
        \|_{\partial\Omega_{\varepsilon}}^2-(f,v_\varepsilon)_{\Omega}\,.}
    \end{align} 
    Since the heat loss functional \eqref{intro:E_vareps_h_convective}  is proper, strictly convex, weakly coercive, and lower semi-continuous,  
    the direct method in the calculus of variations yields the existence of a unique minimizer ${u_\varepsilon^{\mathtt{d}}\in H^1_0(\Omega_\varepsilon)}$, which formally satisfies the Euler--Lagrange equations\vspace{-0.5mm}
    \begin{align}\label{intro:ELE_Eepsh_convective}
        \begin{aligned}
            -\lambda\Delta u_\varepsilon^{\mathtt{d}}&=f&&\quad \text{ a.e.\ in }\Omega\,,\\[-0.5mm]
            -\varepsilon\Delta u_\varepsilon^{\mathtt{d}}&=0&&\quad \text{ a.e.\ in }\Sigma_\varepsilon\,,\\[-0.5mm]
            \varepsilon \nabla u_\varepsilon^{\mathtt{d}}\cdot n_\varepsilon^{\mathtt{d}}+\beta
            u_\varepsilon^{\mathtt{d}}
            &=0&&\quad \text{ a.e.\ on }\partial\Omega_\varepsilon\,,\\[-0.5mm]
            \lambda\nabla (u_\varepsilon^{\mathtt{d}}|_{\Sigma_{\varepsilon}})\cdot n&=\varepsilon\nabla (u_\varepsilon^{\mathtt{d}}|_{\Omega})\cdot n&&\quad \text{ a.e.\ on } \partial\Omega\,,
        \end{aligned}
    \end{align}
    where $\smash{n_{\varepsilon}^{\mathtt{d}}\colon \partial\Omega_{\varepsilon}\to \mathbb{S}^{d-1}}$ denotes the outward unit normal vector field to $\Omega_{\varepsilon}$.

    The literature on asymptotic analysis (as $\varepsilon\to 0^+$) for the heat loss functional~\eqref{intro:E_vareps_h_convective}~(or~for~the Euler--Lagrange \hspace{-0.1mm}equations \hspace{-0.1mm}\eqref{intro:ELE_Eepsh_convective}) \hspace{-0.1mm}is \hspace{-0.1mm}less \hspace{-0.1mm}rich; \hspace{-0.1mm}in \hspace{-0.1mm}fact, \hspace{-0.1mm}we \hspace{-0.1mm}are \hspace{-0.1mm}only \hspace{-0.1mm}aware \hspace{-0.1mm}of \hspace{-0.1mm}the \hspace{-0.1mm}following~\hspace{-0.1mm}\mbox{contribution}:  
\begin{itemize}[noitemsep,topsep=2pt,leftmargin=!,labelwidth=\widthof{$\bullet$}]
        \item[$\bullet$] In the case $\partial\Omega\in C^{1,1}$ 
        and given $\mathtt{d}\in  C^{0,1}(\partial\Omega)$ with $\mathtt{d}\ge \smash{\mathtt{d}_{\textup{min}}}$ a.e.\ on $\partial\Omega$, for some~$\smash{\mathtt{d}_{\textup{min}}}>0$, defining the  insulating layer via \eqref{intro:sigma_eps_conductive_smooth}, 
    Della Pietra \textit{et al}.\ (\textit{cf}.\ \cite[Thm.\ 3.1]{Buttazzo1988}) proved 
    that the limit functional (as $\varepsilon\to 0^{+}$)~of~\eqref{intro:E_vareps_h_convective} (in the sense of $\Gamma(L^2(\mathbb{R}^d))$-convergence) is given via 
    $\smash{E}^\mathtt{d}\colon \hspace{-0.05em}H^1(\Omega)\hspace{-0.05em}\to\hspace{-0.05em} \mathbb{R}$, for every $v\hspace{-0.05em} \in \hspace{-0.05em}H^1(\Omega)$~\mbox{defined}~by
    \begin{align}\label{intro:E_h_smooth_convective}
        \smash{\smash{E}^{\mathtt{d}}(v)\coloneqq \tfrac{\lambda}{2}\|\nabla v\|_{\Omega}^2+\tfrac{\beta}{2}\|(1+\beta\mathtt{d})^{-\smash{\frac{1}{2}}}
        v
        \|_{\partial\Omega}^2-(f,v)_{\Omega}\,.}
    \end{align}  
    A unique minimizer $u^{\mathtt{d}}\in H^1(\Omega)$ to the $\Gamma$-limit functional  \eqref{intro:E_h_smooth} exists and 
    formally 
    satisfies~the Euler--Lagrange equations\vspace{-0.5mm}
    \begin{align}\label{intro:ELE_Eh}
        \begin{aligned}
            -\lambda \Delta u^{\mathtt{d}}&=f&&\quad \text{ a.e.\ in }\Omega\,,\\[-0.5mm]
          \lambda (1+\mathtt{d})\nabla u^{\mathtt{d}}\cdot n+\beta
          u^{\mathtt{d}}
          &=0&&\quad \text{ a.e.\ on }\partial\Omega\,.
        \end{aligned}
    \end{align}\vspace{-7mm}
    \end{itemize}

    \subsection{New contributions}\vspace{-0.5mm}

    \hspace{5mm}The contributions of the paper are two-fold:
    \begin{enumerate}[noitemsep,topsep=2pt,leftmargin=!,labelwidth=\widthof{2.},font=\itshape]
    \item \textit{Generalization to partial insulation.} We extend the results of Della Pietra \textit{et al}.~\mbox{\cite[Thm.~3.1]{PietraNitschScalaTrombetti2021}} to the setting, where the insulating material is attached to only a boundary portion $\Gamma_I\subseteq \partial\Omega$. On the remaining boundary parts $\partial\Omega\setminus\Gamma_I$, Dirichlet and Neumann boundary conditions~are~imposed. Moreover, we also allow for a non-trivial ambient temperature (\textit{i.e.}, $u_{\infty}\not\equiv 0$).
    \item \textit{Generalization to piece-wise flat insulated boundaries.} We  extend the results of Della Pietra \textit{et al}.~\mbox{\cite[Thm.~3.1]{PietraNitschScalaTrombetti2021}} to Lipschitz domains with piece-wise flat insulated boundary parts $\Gamma_I\subseteq \partial\Omega$. This is achieved using the authors' techniques (\textit{cf}.\ \cite{AKK2025_modelling}) for 
    non-smooth~geometries. However, beyond the techniques developed in \cite{AKK2025_modelling}, the proof of the existence of a recovery~sequence, in the case of piece-wise flat insulated boundary $\Gamma_I$, requires an elaborate smoothing of the outward unit normal vector field $n\colon \Gamma_I\to \mathbb{S}^{d-1}$ to enable the construction of suitable~cut-off~functions.
    \end{enumerate} 
    
    \textit{This paper is organized as follows:} In Sec.\ \ref{sec:preliminaries}, we introduce the relevant notation. In addition,
    we briefly recall the most important definitions and results about the closest point projection,  the (un-)signed distance function and transversal vector field needed for the
    forthcoming analysis.\linebreak In Sec.\ \ref{sec:modelling}, resorting to the $\Gamma$-convergence results proved in Sec.\ \ref{sec:gamma_convergence}, we
    perform a model reduction (for $\varepsilon\to 0^+$) leading to a non-local and non-smooth convex minimization problem,
    whose minimization enables to compute (via an implicit formula) the optimal distribution~of~the~\mbox{insulating}~material. In Sec.\ \ref{sec:tools}, we prove several auxiliary technical tools needed to establish the
    main result~of~the~paper, \textit{i.e.}, the $\Gamma$-convergence result, in Sec.\ \ref{sec:gamma_convergence}.
    \newpage
	\section{Preliminaries}\label{sec:preliminaries}\vspace{-0.5mm} 

    \hspace{5mm}In \hspace{-0.1mm}this \hspace{-0.1mm}section, \hspace{-0.1mm}we \hspace{-0.1mm}collect \hspace{-0.1mm}basic 
    \hspace{-0.1mm}definitions \hspace{-0.1mm}and 
    \hspace{-0.1mm}results \hspace{-0.1mm}needed \hspace{-0.1mm}for \hspace{-0.1mm}the \hspace{-0.1mm}later \hspace{-0.1mm}$\Gamma$-convergence~\hspace{-0.1mm}\mbox{analysis}.\vspace{-1.5mm}

    \subsection{Assumptions on the thermally conducting body and boundary parts}\vspace{-0.5mm}

    \hspace{5mm}Throughout the paper, if not otherwise specified, we assume that
     the \emph{thermally~conducting~body} $\Omega\subseteq \mathbb{R}^d$, $d\in \mathbb{N}$, is a bounded Lipschitz domain with (topological) boundary $\partial\Omega$ and outward unit normal vector field $n\colon \partial\Omega\to \mathbb{S}^{d-1}\coloneqq\{x\in \mathbb{R}^d\mid \vert x\vert=1\}$. Moreover, we assume that $\partial\Omega$ is disjointly split into three (relatively) open boundary parts: an \emph{insulated boundary part} $\Gamma_I\subseteq \partial\Omega$, a \emph{Dirichlet boundary part} ${\Gamma_D\subseteq \partial\Omega}$, and a \emph{Neumann boundary part} $\Gamma_N\subseteq \partial\Omega$; more precisely, we have that $\partial\Omega=\overline{\Gamma}_I\cup \overline{\Gamma}_D\cup \overline{\Gamma}_N$ (\textit{cf}.\ Figure~\ref{fig:domain}).  In~this~connection, we always assume that $\Gamma_I\neq \emptyset$.\vspace{-2mm}

     \begin{figure}[H]
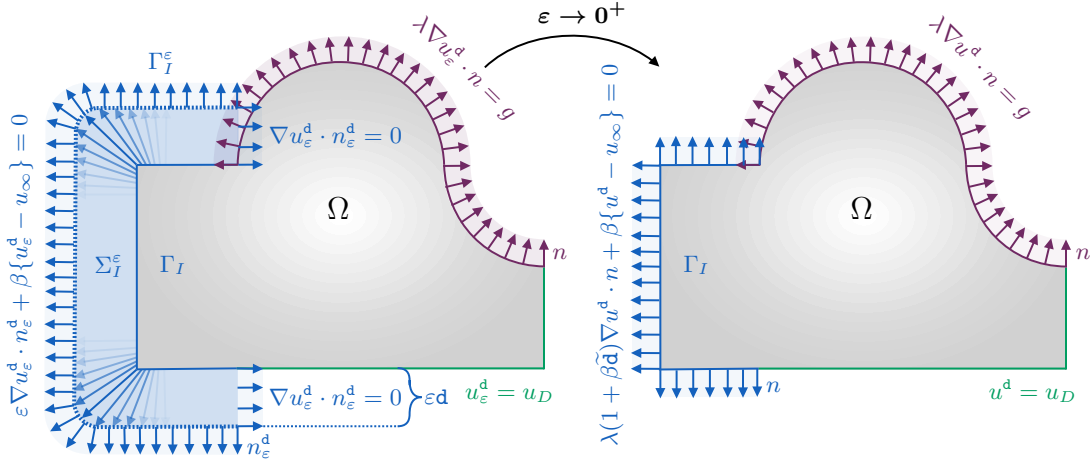

         \centering

  
\tikzset {_ww5bhfpt8/.code = {\pgfsetadditionalshadetransform{ \pgftransformshift{\pgfpoint{0 bp } { 0 bp }  }  \pgftransformscale{1 }  }}}
\pgfdeclareradialshading{_ac4q6gd96}{\pgfpoint{0bp}{0bp}}{rgb(0bp)=(1,1,1);
rgb(0bp)=(1,1,1);
rgb(12.857142857142856bp)=(0.89,0.89,0.89);
rgb(15.982142857142856bp)=(0.86,0.86,0.86);
rgb(25bp)=(0.82,0.82,0.82);
rgb(400bp)=(0.82,0.82,0.82)}

  
\tikzset {_iwybss6hl/.code = {\pgfsetadditionalshadetransform{ \pgftransformshift{\pgfpoint{0 bp } { 0 bp }  }  \pgftransformscale{1 }  }}}
\pgfdeclareradialshading{_hj1qd08up}{\pgfpoint{0bp}{0bp}}{rgb(0bp)=(1,1,1);
rgb(0bp)=(1,1,1);
rgb(12.857142857142856bp)=(0.89,0.89,0.89);
rgb(15.982142857142856bp)=(0.86,0.86,0.86);
rgb(25bp)=(0.82,0.82,0.82);
rgb(400bp)=(0.82,0.82,0.82)}
\tikzset{every picture/.style={line width=0.75pt}} 

\vspace{-1mm}

         \caption{A thermally conducting body $\Omega$ (\textcolor{gray}{gray}) with piece-wise flat insulated boundary $\Gamma_I$ (\textcolor{denim}{blue}) and Lipschitz continuous Dirichlet  $\Gamma_D$ (\textcolor{shamrockgreen}{green}) and Neumann  $\Gamma_N$ (\textcolor{byzantium}{purple})~boundary~part. \textit{Left:} before the  model reduction (as $\varepsilon\to 0^+$), where a Robin boundary condition is imposed at the interacting insulation boundary $\Gamma_{I}^{\varepsilon}$ (\textit{cf}.\ \eqref{def:some_notation.3}); \textit{Right:} after~the~model~reduction~(as~${\varepsilon\to 0^+}$),\linebreak where \hspace{-0.15mm}a \hspace{-0.15mm}Robin  \hspace{-0.15mm}boundary \hspace{-0.15mm}condition \hspace{-0.15mm}with \hspace{-0.15mm}variable \hspace{-0.15mm}coefficient \hspace{-0.15mm}is \hspace{-0.15mm}imposed~\hspace{-0.15mm}at~\hspace{-0.15mm}the~\hspace{-0.15mm}\mbox{insulated}~\hspace{-0.15mm}\mbox{boundary}~\hspace{-0.15mm}$\Gamma_{I}$.}\vspace{-1.5mm}
         \label{fig:domain}
     \end{figure}
    \subsection{Closest point projection and (un-)signed distance function}\vspace{-0.5mm}\enlargethispage{5mm}
    
    \hspace{5mm}The \textit{closest point projection} $\smash{\pi_{\partial\Omega}\colon \mathbb{R}^d\to 2^{\mathbb{R}^d}}$, where $\smash{2^{\mathbb{R}^d}}$ is the power set of $\smash{\mathbb{R}^d}$, for every~$x\in \smash{\mathbb{R}^d}$, is defined by\vspace{-0.5mm}
    \begin{align}\label{def:pi}
        \pi_{\partial\Omega}(x)\coloneqq \underset{y\in \partial\Omega}{\textup{arg\,min}}{\{\vert x-y\vert\}}\,.
    \end{align}
    Denote by $\smash{\mathrm{Med}(\partial\Omega)\hspace{-0.1em}\coloneqq\hspace{-0.1em} \{x\hspace{-0.1em}\in\hspace{-0.1em} \mathbb{R}^d\mid \textrm{card}(\pi_{\partial\Omega}(x))\hspace{-0.1em}>\hspace{-0.1em}1\}}$ the \textit{medial axis} --or \textit{skeleton}-- \textit{of $\Omega$}, \textit{i.e.},~the~set of points in which the closest point projection \eqref{def:pi} is not a singleton;~which~is~closed,~\mbox{$C^2$-rectifiable} (thus, a Lebesgue null set) (\textit{cf}.\ \cite{Alberti1994}), and has the same homotopy type~as~$\Omega$~(\textit{cf}.~\cite[Thm.~4.19]{Lieutier2004}).

 If $\partial\Omega\in C^{1,1}$, there exists $\delta>0$ such that in the \textit{tubular~\mbox{neighborhood}} 
    $\mathcal{N}(\partial\Omega)\coloneqq\partial\Omega+B_\delta^d(0)$, the closest point projection \eqref{def:pi} is single-valued, \textit{i.e.},~${\mathcal{N}(\partial\Omega)\cap \mathrm{Med}(\partial\Omega)=\emptyset}$. For a proof, see \cite[Lem.\ 14.17]{GilbargTrudinger1983} in the~case~${\partial\Omega\in C^2}$, which readily generalizes to the case $\partial\Omega\in  C^{1,1}$.
    

    If only $\Gamma_I\in C^{1,1}$, one can~find  $\delta\in C^{0,1}(\Gamma_I)$~with~${\delta>0}$~on~$\Gamma_I$~and~${\delta=0}$~on~$\partial\Gamma_I$ such~that~in~the \textit{insulated tubular neighborhood} $\mathcal{N}_{\delta}(\Gamma_I)\coloneqq\{ s+t n(s)\mid s\in \Gamma_I\,,\; t\in (-\delta(s),\delta(s))\}$,
    the~closest~point projection \eqref{def:pi} is single-valued, \textit{i.e.},  $\mathcal{N}_\delta(\Gamma_I)\cap \mathrm{Med}(\partial\Omega)=\emptyset$.
    
    If  $\Gamma_I$ is piece-wise flat (\textit{i.e.},  there  exist $L\hspace{-0.15em}\in\hspace{-0.15em}  \mathbb{N}$  boundary parts  $\smash{\Gamma_{I}^{\ell}}\hspace{-0.15em}\subseteq\hspace{-0.15em}\Gamma_I$, $\ell\hspace{-0.15em}=\hspace{-0.15em}1,\ldots,L$,~with~\mbox{constant} outward normal vectors $n_{\ell}\in \mathbb{S}^{d-1}$ such that $\smash{\bigcup_{\ell=1}^L{\Gamma_{I}^{\ell}}}= \Gamma_I$), one can~find $\delta\in C^{0,1}(\Gamma_{I})$ with $\delta\hspace{-0.1em}>\hspace{-0.1em}0$ in $\Gamma_{I}^{\ell} $ and $\delta\hspace{-0.1em}=\hspace{-0.1em}0$ on $\partial\Gamma_{I}^{\ell} $ for all $\ell\hspace{-0.1em}=\hspace{-0.1em}1,\ldots,L$ such that in the \textit{local insulated tubular~neighbor\-hoods} $\mathcal{N}_\delta(\Gamma_I^{\ell})\hspace{-0.1em}\coloneqq\hspace{-0.1em}\{s+t n_{\ell}\mid s\hspace{-0.1em}\in\hspace{-0.1em} \Gamma_I^{\ell}\,,\; t\hspace{-0.1em}\in\hspace{-0.1em} (-\delta(s),\delta(s))\}$,~${\ell\hspace{-0.1em}=\hspace{-0.1em}1,\ldots,L}$,~the~\mbox{closest}~point~\mbox{projection}~\eqref{def:pi} is single-valued, \textit{i.e.},  $\mathcal{N}_\delta(\Gamma_I^{\ell})\cap \mathrm{Med}(\partial\Omega)=\emptyset$ for all $\ell=1,\ldots,L$, as~well~as $\mathcal{N}_\delta(\Gamma_I^{\ell})\cap \mathcal{N}_\delta(\Gamma_I^{\ell'})=\empty$~if~${\ell\neq \ell'}$.\newpage


    In the later $\Gamma$-convergence analyses (especially the proof of the $\limsup$-estimate, \textit{cf}.\ Lemma~\ref{lem:limsup_case2}), it \hspace{-0.1mm}is \hspace{-0.1mm}central \hspace{-0.1mm}to \hspace{-0.1mm}measure \hspace{-0.1mm}distances \hspace{-0.1mm}of \hspace{-0.1mm}exterior \hspace{-0.1mm}(\textit{i.e.}, \hspace{-0.1mm}outside \hspace{-0.1mm}$\Omega$) \hspace{-0.1mm}and \hspace{-0.1mm}interior \hspace{-0.1mm}(\textit{i.e.},~\hspace{-0.1mm}inside~\hspace{-0.1mm}$\Omega$)~\hspace{-0.1mm}points~\hspace{-0.1mm}to~\hspace{-0.1mm}$\partial\Omega$, which is provided by 
    the \textit{unsigned distance function} $\textup{dist}(\cdot,\partial\Omega)\colon \mathbb{R}^d\to [0,+\infty)$, for every $x\in \mathbb{R}^d$ defined by  
     \begin{align}\label{def:dist}
         \textup{dist}(x,\partial\Omega)\coloneqq \min_{y\in\partial\Omega}{\{\vert x-y\vert\}}=\vert x-\pi(x)\vert\,,
     \end{align}
     where the second equality sign exploits that $\vert x-x'\vert\hspace{-0.1em}=\hspace{-0.1em}\vert x-x''\vert$ for all $x',x''\hspace{-0.1em}\in\hspace{-0.1em} \pi_{\partial\Omega}(x)$~and~${x\hspace{-0.1em}\in \hspace{-0.1em}\partial\Omega}$.~By construction, \hspace{-0.1mm}the \hspace{-0.1mm}unsigned \hspace{-0.1mm}distance \hspace{-0.1mm}function \hspace{-0.1mm}\eqref{def:dist} 
   \hspace{-0.1mm}is \hspace{-0.1mm}Lipschitz~\hspace{-0.1mm}\mbox{continuous}~\hspace{-0.1mm}with~\hspace{-0.1mm}\mbox{constant}~\hspace{-0.1mm}$1$~\hspace{-0.1mm}and,~\hspace{-0.1mm}thus,
    by  Rademacher's theorem (\textit{cf}.\ \cite[Thm.\ 2.14]{AFP2000}),  a.e.\ 
    differentiable with $\vert \nabla \textup{dist}(\cdot,\partial\Omega)\vert\leq 1$~a.e.~in~$\mathbb{R}^d$. Beyond that, according to \cite[Cor.\ 3.4.5]{CannarsaSinestrari2004}, 
     it is precisely differentiable in $\mathbb{R}^d\setminus(\textrm{Med}(\partial\Omega)\cup\partial\Omega)$~with
     \begin{align}\label{eq:grad_dist}
         \nabla \textup{dist}(\cdot,\partial\Omega)=\begin{cases}
             n\circ\pi_{\partial\Omega}&\text{ in }\mathbb{R}^d\setminus (\textrm{Med}(\partial\Omega)\cup\overline{\Omega})\,,\\
             -n\circ\pi_{\partial\Omega}&\text{ in }\Omega\setminus \textrm{Med}(\partial\Omega)\,.
         \end{cases}
     \end{align} 

    The change of sign in \eqref{eq:grad_dist} is due to the fact that the unsigned distance function \eqref{def:dist}~does~not take into account whether points lie inside or outside $\Omega$. This additional information is included in the \textit{signed distance function} $\smash{\widehat{\mathrm{dist}}}(\cdot,\partial\Omega)\colon \mathbb{R}^d\to \mathbb{R}$, for every $x\in \mathbb{R}^d$ defined by
     \begin{align}\label{def:dist_hat}
         \smash{\widehat{\mathrm{dist}}}(x,\partial\Omega)\coloneqq\begin{cases}
             \textup{dist}(x,\partial\Omega)&\text{ if }x\in \mathbb{R}^d\setminus\Omega\,,\\
             -\textup{dist}(x,\partial\Omega)&\text{ else}\,.
         \end{cases} 
     \end{align}
     Inherited from the unsigned distance function \eqref{def:dist}, the signed distance function \eqref{def:dist_hat}
     is equally Lipschitz continuous with constant $1$ and, thus, 
    a.e.\ 
    differentiable with $\vert \nabla \smash{\widehat{\mathrm{dist}}}(\cdot,\partial\Omega)\vert\leq 1$~a.e.~in~$\mathbb{R}^d$. Since the signed distance function \eqref{def:dist_hat} takes into account whether points lie inside or outside $\Omega$, it is not only 
    differentiable in $\mathbb{R}^d\setminus(\textrm{Med}(\partial\Omega)\cup\partial\Omega)$, but --instead of \eqref{eq:grad_dist}--~additionally~satisfies
     \begin{align}\label{eq:grad_dist_hat}
         \nabla \smash{\widehat{\mathrm{dist}}}(\cdot,\partial\Omega)= 
             n\circ\pi_{\partial\Omega}\quad\text{ in }\mathbb{R}^d\setminus (\textrm{Med}(\partial\Omega)\cup\partial\Omega)\,.
     \end{align}  
     Thanks to \eqref{eq:grad_dist_hat}, if the insulated boundary $\Gamma_I$ is piece-wise flat,  close to the flat~\mbox{boundary~parts} $\Gamma_{I}^{\ell}$, $\ell=1,\ldots,L$, but away from their boundaries $\partial\Gamma_{I}^{\ell}$, $\ell=1,\ldots,L$, (\textit{cf}.\ Figure \ref{fig:limsup_case2}), %
     the signed distance function \eqref{def:dist_hat} is piece-wise affine and, thus, locally invariant under 
     mollification~across~$\Gamma_I$, which~is~the~striking ingredient in the proof of the $\limsup$-estimate~in~the~case~of~a~\mbox{piece-wise}~flat insulated boundary $\Gamma_I$ (\textit{cf}.\ Lemma~\ref{lem:limsup_case2}).\enlargethispage{3mm}\vspace{-1mm}
     
    \subsection{Function spaces}\vspace{-0.5mm}
    
    \hspace{5mm}Let $\omega \hspace{-0.15em}\subseteq \hspace{-0.15em}\smash{\mathbb{R}^d}$, $d\hspace{-0.15em}\in\hspace{-0.15em} \mathbb{N}$, be a Lebesgue measurable set with Lebesgue measure ${\vert \omega\vert\hspace{-0.15em}\coloneqq\hspace{-0.15em} \int_{\omega}{1\,\mathrm{d}x}\hspace{-0.15em}\in\hspace{-0.15em} [0,+\infty]}$.
    Then, for Lebesgue measurable functions or vector fields $v,w\colon \omega\to \mathbb{R}^{\ell}$, $\ell\in\{1,d\}$, respectively, we employ the inner product $(v,w)_{\omega}\coloneqq \int_{\omega}{v\odot w\,\mathrm{d}x}$,
	whenever the right-hand side is well-defined, where $\smash{\odot\colon \mathbb{R}^{\ell}\times \mathbb{R}^{\ell}\to \mathbb{R}}$ either denotes~scalar~multiplication or the Euclidean inner product. 
    
    For $p\in [1,+\infty]$, we employ standard notation for Lebesgue $\smash{L^p(\omega)}$ and Sobolev $\smash{H^{1,p}(\omega)}$~spaces, where $\omega$ shall be open for  Sobolev spaces. 
    The $L^p(\omega)$- and $H^{1,p}(\omega)$-norm, respectively,~is~defined~by\vspace{-0.5mm}
    \begin{align*}
        \|\cdot\|_{p,\omega}&\coloneqq\begin{cases}
             (\int_\omega{\vert \cdot\vert^p\,\mathrm{d}x})^{\smash{\frac{1}{p}}}&\text{ if }p\in [1,+\infty)\,,\\
    \textup{ess\,sup}_{x\in \omega}{\vert (\cdot)(x)\vert}&\text{ if }p=+\infty\,,
        \end{cases} \\
        \|\cdot\|_{1,p,\omega}&\coloneqq \|\cdot\|_{p,\omega}+\|\nabla\cdot\|_{p,\omega}\,.
    \end{align*}
    The completion of the linear space of smooth and compactly supported functions $C_c^{\infty}(\omega)$~in~$H^{1,p}(\omega)$ is denoted by $H^{1,p}_0(\omega)$.
    We abbreviate $\smash{H^1(\omega)}\coloneqq \smash{H^{1,2}(\omega)}$, $\smash{H^1_0(\omega)}\coloneqq \smash{H^{1,2}_0(\omega)}$, and $\|\cdot\|_{\omega}\coloneqq \|\cdot\|_{2,\omega}$.\linebreak
    Moreover, we employ the same notation in the case that $\omega$ is replaced by a (relatively) open boundary part $\gamma\subseteq \partial\Omega$, in which case the Lebesgue measure $\mathrm{d}x$ is replaced by the~surface~\mbox{measure}~$\mathrm{d}s$. 

    The assumption $\Gamma_I\neq\emptyset$ guarantees the validity of 
    Friedrich's inequality (\textit{cf}.\ \cite[Ex.\ II.5.13]{Galdi}), which states that there exists a constant $c_{\mathrm{F}}>0$ such that for every ${v\in H^1(\Omega)}$,~there~holds
        \begin{align}\label{lem:poin_cont}
            \|v\|_{\Omega}^2\leq \smash{c_{\mathrm{F}}\,\{\|\nabla v\|_{\Omega}^2+\|v\|_{\Gamma_I}^2\}}\,.\\[-6mm]\notag
        \end{align} 

    \subsection{Transversal vector fields}\vspace{-0.5mm}

   \hspace{5mm}The key idea in the generalization of the $\Gamma$-convergence analysis for the case $\Gamma_{I}\in C^{1,1}$~in~\cite{PietraNitschScalaTrombetti2021}~to bounded Lipschitz domains with piece-wise flat $\Gamma_{I}\in C^{0,1}$ is to relax the orthogonality~condition on the outward unit normal field $n\colon \Gamma_{I}\to \mathbb{S}^{d-1}$, preventing the latter to be regular (\textit{cf}.\ Figure~\ref{fig:transversality}\textit{(top)}). More precisely, we replace the outward unit normal field $n\colon \hspace{-0.1em}\Gamma_{I}\hspace{-0.1em}\to\hspace{-0.1em} \mathbb{S}^{d-1}$~by~a~\mbox{unit-length}~vector~field  $k\colon \Gamma_{I}\to \mathbb{S}^{d-1}$ with comparable  properties, but which is allowed to violate the orthogonality condition (on $n$) to a certain extent (\textit{i.e.}, depending on the Lipschitz regularity of $\Gamma_{I}$),~as~a~consequence, is more flexible and can be chosen to be arbitrarily smooth --even if only $\Gamma_{I}\in C^{0,1}$.


   A class of vector fields that precisely meets these requirements are transversal vector fields, for which we employ the following standard definition in this paper (see \cite{HMT07},~for~a~detailed~discussion):\enlargethispage{2.5mm}

	\begin{definition}\label{def:transversal}
		An open set $\Omega\subseteq \mathbb{R}^d$, $d\in  \mathbb{N}$,  of locally finite perimeter, with outward unit normal vector field $  n\colon\partial \Omega\to\mathbb{S}^{d-1}$, has a \emph{continuous (globally) transversal~vector~field} if there exists a  vector field $k \in (C^0(\partial \Omega))^d$ and a constant $\kappa>0$,~the~\emph{transversality constant}~of~$k $,~such~that\vspace{-0.5mm}
			\begin{align}
				k \cdot  n\ge \kappa \quad\text{ a.e.\ on } \partial \Omega\,.\label{eq:transversal} 
			\end{align} 
	\end{definition}

    \begin{remark}[interpretation of transversality]\label{rem:transversality} The condition \eqref{eq:transversal} can be seen as an \emph{`normal angle condition'} as it is equivalent to  
        \begin{align*}
            \smash{\sphericalangle(k ,  n)=\arccos(k \cdot  n)\leq \arccos(\kappa)\quad\text{ a.e.\ on }\partial\Omega\,,}
        \end{align*}
        and, thus,  expresses that 
        the continuous (globally) transversal vector field 
        $k \in (C^0(\partial \Omega))^d$~varies~from the outward unit normal vector field $  n\colon \partial\Omega\to \mathbb{S}^{d-1}$ up to the~maximal~angle~$\arccos(\kappa)$~(\textit{cf}.~\mbox{Figure}~\ref{fig:transversality}). 
    \end{remark}

    \begin{remark}[simple examples for transversal vector fields]\label{rem:examples}
         \begin{itemize}[noitemsep,topsep=2pt,leftmargin=!,labelwidth=\widthof{(ii)}]
            \item[(i)] \hypertarget{rem:examples.i}{} According to \cite[Thm.\ 2.19, (2.74), (2.75)]{HMT07}, if $\Omega\subseteq \mathbb{R}^d$, $d\in \mathbb{N}$,  is a non-empty, bounded open set of locally~finite~perimeter,  
            then, for every $\alpha\in [0,1]$, there holds $n\in (C^{0,\alpha}(\partial\Omega))^d$ if and only if $\Omega$ is a $C^{1,\alpha}$-domain, so that  if $\Omega$ is  a $C^{1,\alpha}$-domain for some $\alpha\in [0,1]$,  a continuous (globally) transversal vector field (with transversality constant $\kappa=1$) is given via $k\coloneqq n\in (C^{0,\alpha}(\partial\Omega))^d$~(\textit{cf}.~Figure~\ref{fig:placeholder});

            \item[(ii)] \hypertarget{rem:examples.ii}{} According to \cite[Cor.~4.21]{HMT07},
            if $\Omega\hspace{-0.05em}\subseteq\hspace{-0.05em} \mathbb{R}^d$, $d\hspace{-0.05em}\in \hspace{-0.05em}\mathbb{N}$, is star-shaped with respect~to~a~ball~${B_r^d(x_0)\hspace{-0.05em}\subseteq \hspace{-0.05em}\Omega}$, where $r>0$ and $x_0\in \Omega$,  
            a smooth (globally) transversal vector field of unit-length is given~via $k\coloneqq \tfrac{\mathrm{id}_{\smash{\mathbb{R}^d}}-x_0}{\vert \mathrm{id}_{\smash{\mathbb{R}^d}}-x_0\vert}\in(C^{\infty}(\partial\Omega))^d$ (\textit{cf}.\ Figure \ref{fig:transversality2D_3D}).\vspace{-1mm} 

            \begin{figure}[H]\vspace{-1.5mm}
                \centering
                \includegraphics[width=0.35\linewidth]{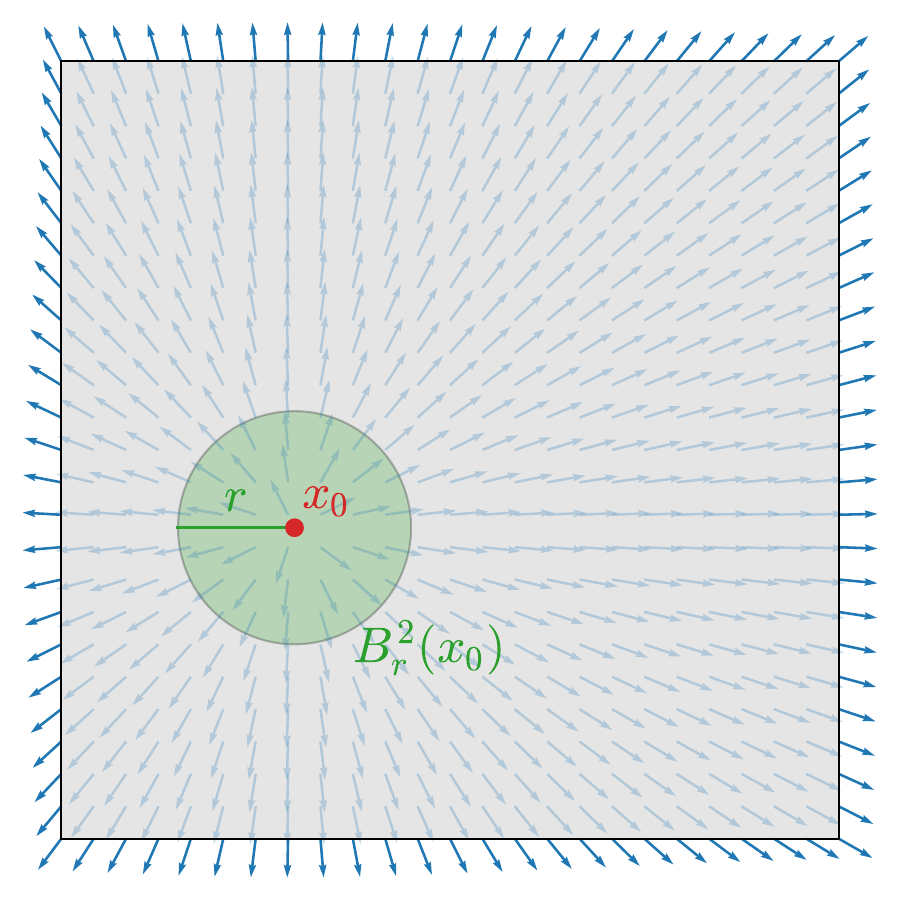}\qquad\includegraphics[width=0.375\linewidth]{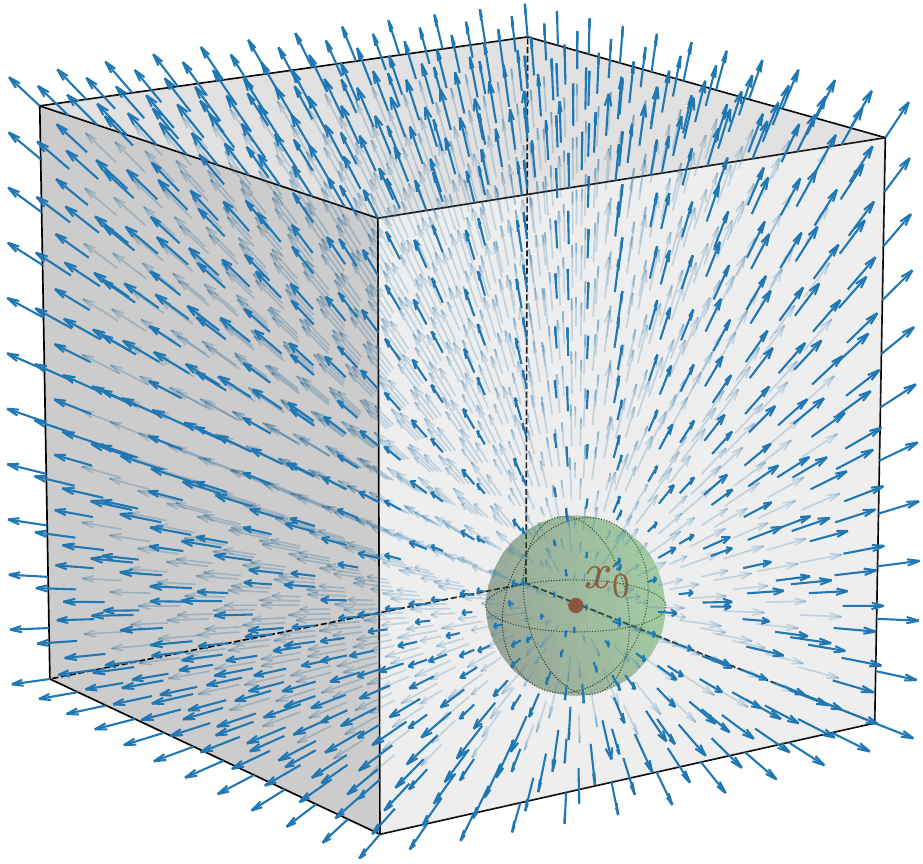}\vspace{-1mm}
                \caption{A domain $\Omega\subseteq \mathbb{R}^d$, $d\in \{2,3\}$, (\textcolor{gray}{gray}) star-shaped with respect to a ball $B_r^d(x_0)$ (\textcolor{shamrockgreen}{green}) and (globally) transversal vector field $k\coloneqq \tfrac{\mathrm{id}_{\smash{\mathbb{R}^d}}-x_0}{\vert \mathrm{id}_{\smash{\mathbb{R}^d}}-x_0\vert}\in(C^{\infty}(\partial\Omega))^d$ (\textcolor{denim}{blue})~\mbox{centred}~at~${x_0\in\Omega}$~(\textcolor{red}{red}).\vspace{-1.5mm}}
                \label{fig:transversality2D_3D}
            \end{figure}
        \end{itemize}
    \end{remark}

    The existence of a continuous (globally) transversal vector field is always ensured~in~this~paper.\vspace{-0.5mm}

    \begin{theorem}\label{thm:ex_transversal}
        Let $\Omega\subseteq \mathbb{R}^d$, $d\in \mathbb{N}$, be a non-empty, bounded Lipschitz domain. Then, there exists a vector field $k\in (C^\infty(\mathbb{R}^d))^d$ whose restriction to $\partial\Omega$ is (globally) transversal for $\Omega$.\vspace{-0.5mm}
    \end{theorem}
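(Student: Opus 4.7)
The plan is to construct $k$ via a partition-of-unity assembled from locally constant directions adapted to Lipschitz graph charts, and then to mollify/extend. Since $\Omega$ is a bounded Lipschitz domain, for every boundary point $x\in \partial\Omega$ there is an open neighborhood $U_x\subseteq \mathbb{R}^d$, a rotation $R_x\in SO(d)$, and a Lipschitz function $\gamma_x$ such that, in the rotated coordinates, $\partial\Omega\cap U_x$ is the graph $\{(y',\gamma_x(y'))\}$, with $\Omega\cap U_x$ lying below the graph. A standard computation in these coordinates shows that the outward unit normal $n$ satisfies
\begin{align*}
n(y)\cdot e_x \ge \tfrac{1}{\sqrt{1+\mathrm{Lip}(\gamma_x)^2}} \quad\text{for a.e.\ } y\in \partial\Omega\cap U_x\,,
\end{align*}
where $e_x\coloneqq R_x^\top e_d$ is the constant (unit) vertical direction of the chart. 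In particular, $e_x$ is a constant (and hence smooth) transversal direction over $U_x$ with explicit transversality constant $\kappa_x>0$.

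By compactness of $\partial\Omega$, finitely many such neighborhoods $U_1,\dots,U_N$ cover $\partial\Omega$, with associated constant directions $e_1,\dots,e_N\in \mathbb{S}^{d-1}$ and constants $\kappa_1,\dots,\kappa_N>0$. Choose an open set $U_0\Subset \Omega$ so that $\{U_0,U_1,\ldots,U_N\}$ covers an open neighborhood of $\overline{\Omega}$, and fix a smooth partition of unity $\{\varphi_i\}_{i=0}^{N}\subseteq C_c^\infty(\mathbb{R}^d)$ subordinate to this cover, with $\sum_{i=0}^N \varphi_i\equiv 1$ on a neighborhood of $\overline{\Omega}$. Define
\begin{align*}
k\coloneqq \sum_{i=1}^{N} \varphi_i\, e_i \in \bigl(C^\infty(\mathbb{R}^d)\bigr)^d\,.
\end{align*}
Since $\varphi_0$ vanishes on $\partial\Omega$, we have $\sum_{i=1}^N \varphi_i(x)=1$ for every $x\in \partial\Omega$. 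Therefore, at a.e.\ $x\in\partial\Omega$,
\begin{align*}
k(x)\cdot n(x) \;=\; \sum_{i=1}^{N} \varphi_i(x)\,\bigl(e_i\cdot n(x)\bigr) \;\ge\; \min_{i\in\{1,\ldots,N\}} \kappa_i \;\eqqcolon\; \kappa>0\,,
\end{align*}
where we used that $\varphi_i(x)>0$ forces $x\in U_i$, in which case $e_i\cdot n(x)\ge \kappa_i$. This proves transversality in the sense of Definition~\ref{def:transversal}, with an explicit transversality constant $\kappa>0$. If unit length is desired, one may further replace $k$ by $k/|k|$ on a neighborhood of $\partial\Omega$ (this is well-defined and smooth since $k\cdot n\ge \kappa$ forces $|k|\ge \kappa$ on $\partial\Omega$, hence in a tubular neighborhood) and then glue smoothly to any bounded smooth extension outside.

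The only step requiring any care is extracting the lower bound $e_x\cdot n \ge \kappa_x$ on each chart; this is a routine consequence of the fact that, in graph coordinates, the outward normal is $(-\nabla \gamma_x,1)/\sqrt{1+|\nabla \gamma_x|^2}$ a.e., together with Rademacher's theorem giving $|\nabla \gamma_x|\le \mathrm{Lip}(\gamma_x)$ a.e. The partition-of-unity assembly is standard and preserves transversality because a convex combination of vectors, each having inner product at least $\kappa_i>0$ with the same unit normal, still has inner product at least $\min_i \kappa_i$ with it. Smoothness on all of $\mathbb{R}^d$ is built-in by construction, so no separate extension theorem is required.
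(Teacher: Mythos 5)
Your construction is correct. The paper itself does not prove Theorem~\ref{thm:ex_transversal}; it simply cites \cite[Cor.\ 2.13]{HMT07}. What you have written out is, in essence, the classical argument that underlies that citation: cover $\partial\Omega$ by finitely many Lipschitz graph charts, observe that in each chart the constant vertical direction $e_i=R_i^\top e_d$ satisfies $e_i\cdot n\ge (1+\mathrm{Lip}(\gamma_i)^2)^{-1/2}$ a.e.\ (via Rademacher and the a.e.\ formula $n=(-\nabla\gamma_i,1)/\sqrt{1+|\nabla\gamma_i|^2}$ in rotated coordinates), and glue these constant fields by a smooth partition of unity, using that a convex combination of vectors each making inner product at least $\kappa_i$ with the same unit normal still makes inner product at least $\min_i\kappa_i$ with it. All the steps check out: the interior patch $U_0\Subset\Omega$ guarantees $\sum_{i=1}^N\varphi_i\equiv 1$ on $\partial\Omega$, smoothness of $k$ on all of $\mathbb{R}^d$ is automatic, and your remark on normalizing to unit length (which the theorem does not actually require, but which the paper uses later) is sound since $|k|\ge k\cdot n\ge\kappa$ on $\partial\Omega$ and hence $|k|$ is bounded below on a neighborhood. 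The only thing your route "costs" relative to the citation is that \cite{HMT07} proves the statement in the greater generality of sets of locally finite perimeter with suitable normal-vector hypotheses, whereas your argument is tailored to (and fully sufficient for) bounded Lipschitz domains; what it buys is a self-contained, elementary proof with an explicit transversality constant $\kappa=\min_i(1+\mathrm{Lip}(\gamma_i)^2)^{-1/2}$.
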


    \begin{proof}
        See \cite[Cor.\ 2.13]{HMT07}.
    \end{proof} 

    \begin{figure}[H]
        \centering
        
        \tikzset {_icitj8vmp/.code = {\pgfsetadditionalshadetransform{ \pgftransformshift{\pgfpoint{0 bp } { 0 bp }  }  \pgftransformrotate{-90 }  \pgftransformscale{2 }  }}}
        \pgfdeclarehorizontalshading{_6wypyjxno}{150bp}{rgb(0bp)=(0.89,0.89,0.89);
        	rgb(37.5bp)=(0.89,0.89,0.89);
        	rgb(37.5bp)=(0.86,0.86,0.86);
        	rgb(37.5bp)=(0.82,0.82,0.82);
        	rgb(62.5bp)=(1,1,1);
        	rgb(100bp)=(1,1,1)}
        \tikzset{every picture/.style={line width=0.75pt}} 
	
  
\tikzset{every picture/.style={line width=0.75pt}} 

\begin{tikzpicture}[x=1.34pt,y=1.34pt,yscale=-1,xscale=1]

\draw [color={white}  ,draw opacity=1 ] [dash pattern={on 0.75pt off 0.75pt on 0.75pt off 0.75pt}]  (317.83,77.11) -- (317.83,96.58) ;
\draw  [draw opacity=0][fill={denim}  ,fill opacity=0.2 ] (10.33,27.28) .. controls (10.53,27.28) and (10.73,27.28) .. (10.94,27.28) .. controls (43.25,27.52) and (85.64,90.42) .. (117.72,90.48) .. controls (149.8,90.54) and (191.65,27.34) .. (224.23,27.28) .. controls (255.74,27.22) and (295.74,83.62) .. (319.83,87.55) -- (319.83,139.58) -- (10.33,139.58) -- (10.33,27.28) -- cycle ;
\draw [color={denim}  ,draw opacity=1 ]   (117.72,123.03) -- (117.95,107.25) ;
\draw [shift={(118,104.25)}, rotate = 90.86] [fill={denim}  ,fill opacity=1 ][line width=0.08]  [draw opacity=0] (3.57,-1.72) -- (0,0) -- (3.57,1.72) -- cycle    ;
\draw [color={denim}  ,draw opacity=0.9 ]   (121.75,122.75) -- (119.86,106.98) ;
\draw [shift={(119.5,104)}, rotate = 83.16] [fill={denim}  ,fill opacity=0.9 ][line width=0.08]  [draw opacity=0] (3.57,-1.72) -- (0,0) -- (3.57,1.72) -- cycle    ;
\draw [color={denim}  ,draw opacity=0.8 ]   (125.3,121.78) -- (121.91,106.68) ;
\draw [shift={(121.25,103.75)}, rotate = 77.34] [fill={denim}  ,fill opacity=0.8 ][line width=0.08]  [draw opacity=0] (3.57,-1.72) -- (0,0) -- (3.57,1.72) -- cycle    ;
\draw [color={denim}  ,draw opacity=0.7 ]   (128.8,120.78) -- (123.73,106.09) ;
\draw [shift={(122.75,103.25)}, rotate = 70.96] [fill={denim}  ,fill opacity=0.7 ][line width=0.08]  [draw opacity=0] (3.57,-1.72) -- (0,0) -- (3.57,1.72) -- cycle    ;
\draw [color={denim}  ,draw opacity=0.6 ]   (132,119.75) -- (125.71,105.49) ;
\draw [shift={(124.5,102.75)}, rotate = 66.19] [fill={denim}  ,fill opacity=0.6 ][line width=0.08]  [draw opacity=0] (3.57,-1.72) -- (0,0) -- (3.57,1.72) -- cycle    ;
\draw [color={denim}  ,draw opacity=0.5 ]   (135.25,118.5) -- (128.11,104.43) ;
\draw [shift={(126.75,101.75)}, rotate = 63.09] [fill={denim}  ,fill opacity=0.5 ][line width=0.08]  [draw opacity=0] (3.57,-1.72) -- (0,0) -- (3.57,1.72) -- cycle    ;
\draw [color={denim}  ,draw opacity=0.4 ]   (137.75,116.75) -- (130.42,103.14) ;
\draw [shift={(129,100.5)}, rotate = 61.7] [fill={denim}  ,fill opacity=0.4 ][line width=0.08]  [draw opacity=0] (3.57,-1.72) -- (0,0) -- (3.57,1.72) -- cycle    ;
\draw [color={denim}  ,draw opacity=0.3 ]   (140.25,115) -- (132.74,101.85) ;
\draw [shift={(131.25,99.25)}, rotate = 60.26] [fill={denim}  ,fill opacity=0.3 ][line width=0.08]  [draw opacity=0] (3.57,-1.72) -- (0,0) -- (3.57,1.72) -- cycle    ;
\draw [color={denim}  ,draw opacity=0.2 ]   (142.75,113.5) -- (135.04,100.58) ;
\draw [shift={(133.5,98)}, rotate = 59.17] [fill={denim}  ,fill opacity=0.2 ][line width=0.08]  [draw opacity=0] (3.57,-1.72) -- (0,0) -- (3.57,1.72) -- cycle    ;
\draw [color={denim}  ,draw opacity=0.1 ]   (145.25,111.75) -- (137.36,99.28) ;
\draw [shift={(135.75,96.75)}, rotate = 57.65] [fill={denim}  ,fill opacity=0.1 ][line width=0.08]  [draw opacity=0] (3.57,-1.72) -- (0,0) -- (3.57,1.72) -- cycle    ;
\draw [color={denim}  ,draw opacity=0.9 ]   (113.91,122.75) -- (115.81,106.98) ;
\draw [shift={(116.17,104)}, rotate = 96.89] [fill={denim}  ,fill opacity=0.9 ][line width=0.08]  [draw opacity=0] (3.57,-1.72) -- (0,0) -- (3.57,1.72) -- cycle    ;
\draw [color={denim}  ,draw opacity=0.8 ]   (110.33,121.78) -- (113.75,106.68) ;
\draw [shift={(114.41,103.75)}, rotate = 102.74] [fill={denim}  ,fill opacity=0.8 ][line width=0.08]  [draw opacity=0] (3.57,-1.72) -- (0,0) -- (3.57,1.72) -- cycle    ;
\draw [color={denim}  ,draw opacity=0.7 ]   (106.81,120.78) -- (111.92,106.08) ;
\draw [shift={(112.9,103.25)}, rotate = 109.16] [fill={denim}  ,fill opacity=0.7 ][line width=0.08]  [draw opacity=0] (3.57,-1.72) -- (0,0) -- (3.57,1.72) -- cycle    ;
\draw [color={denim}  ,draw opacity=0.6 ]   (103.59,119.75) -- (109.92,105.49) ;
\draw [shift={(111.14,102.75)}, rotate = 113.95] [fill={denim}  ,fill opacity=0.6 ][line width=0.08]  [draw opacity=0] (3.57,-1.72) -- (0,0) -- (3.57,1.72) -- cycle    ;
\draw [color={denim}  ,draw opacity=0.5 ]   (100.32,118.5) -- (107.51,104.42) ;
\draw [shift={(108.87,101.75)}, rotate = 117.06] [fill={denim}  ,fill opacity=0.5 ][line width=0.08]  [draw opacity=0] (3.57,-1.72) -- (0,0) -- (3.57,1.72) -- cycle    ;
\draw [color={denim}  ,draw opacity=0.4 ]   (97.8,116.75) -- (105.18,103.14) ;
\draw [shift={(106.61,100.5)}, rotate = 118.46] [fill={denim}  ,fill opacity=0.4 ][line width=0.08]  [draw opacity=0] (3.57,-1.72) -- (0,0) -- (3.57,1.72) -- cycle    ;
\draw [color={denim}  ,draw opacity=0.3 ]   (95.28,115) -- (102.85,101.85) ;
\draw [shift={(104.34,99.25)}, rotate = 119.91] [fill={denim}  ,fill opacity=0.3 ][line width=0.08]  [draw opacity=0] (3.57,-1.72) -- (0,0) -- (3.57,1.72) -- cycle    ;
\draw [color={denim}  ,draw opacity=0.2 ]   (92.77,113.5) -- (100.53,100.57) ;
\draw [shift={(102.08,98)}, rotate = 121] [fill={denim}  ,fill opacity=0.2 ][line width=0.08]  [draw opacity=0] (3.57,-1.72) -- (0,0) -- (3.57,1.72) -- cycle    ;
\draw [color={denim}  ,draw opacity=0.1 ]   (90.25,111.75) -- (98.2,99.28) ;
\draw [shift={(99.81,96.75)}, rotate = 122.52] [fill={denim}  ,fill opacity=0.1 ][line width=0.08]  [draw opacity=0] (3.57,-1.72) -- (0,0) -- (3.57,1.72) -- cycle    ;
\draw [color={denim}  ,draw opacity=1 ]   (224.23,59.84) -- (224.47,44.06) ;
\draw [shift={(224.52,41.06)}, rotate = 90.86] [fill={denim}  ,fill opacity=1 ][line width=0.08]  [draw opacity=0] (3.57,-1.72) -- (0,0) -- (3.57,1.72) -- cycle    ;
\draw [color={denim}  ,draw opacity=0.9 ]   (226.41,60) -- (227.74,44.49) ;
\draw [shift={(228,41.5)}, rotate = 94.92] [fill={denim}  ,fill opacity=0.9 ][line width=0.08]  [draw opacity=0] (3.57,-1.72) -- (0,0) -- (3.57,1.72) -- cycle    ;
\draw [color={denim}  ,draw opacity=0.8 ]   (228.33,60.03) -- (231.19,44.95) ;
\draw [shift={(231.75,42)}, rotate = 100.73] [fill={denim}  ,fill opacity=0.8 ][line width=0.08]  [draw opacity=0] (3.57,-1.72) -- (0,0) -- (3.57,1.72) -- cycle    ;
\draw [color={denim}  ,draw opacity=0.7 ]   (230.31,60.53) -- (234.65,45.88) ;
\draw [shift={(235.5,43)}, rotate = 106.49] [fill={denim}  ,fill opacity=0.7 ][line width=0.08]  [draw opacity=0] (3.57,-1.72) -- (0,0) -- (3.57,1.72) -- cycle    ;
\draw [color={denim}  ,draw opacity=0.6 ]   (232.31,61.28) -- (237.48,47.07) ;
\draw [shift={(238.5,44.25)}, rotate = 109.97] [fill={denim}  ,fill opacity=0.6 ][line width=0.08]  [draw opacity=0] (3.57,-1.72) -- (0,0) -- (3.57,1.72) -- cycle    ;
\draw [color={denim}  ,draw opacity=0.5 ]   (234.81,62.03) -- (240.37,48.28) ;
\draw [shift={(241.5,45.5)}, rotate = 112.03] [fill={denim}  ,fill opacity=0.5 ][line width=0.08]  [draw opacity=0] (3.57,-1.72) -- (0,0) -- (3.57,1.72) -- cycle    ;
\draw [color={denim}  ,draw opacity=0.4 ]   (237.06,63.03) -- (243.04,49.49) ;
\draw [shift={(244.25,46.75)}, rotate = 113.83] [fill={denim}  ,fill opacity=0.4 ][line width=0.08]  [draw opacity=0] (3.57,-1.72) -- (0,0) -- (3.57,1.72) -- cycle    ;
\draw [color={denim}  ,draw opacity=0.3 ]   (239.56,63.78) -- (245.72,50.71) ;
\draw [shift={(247,48)}, rotate = 115.24] [fill={denim}  ,fill opacity=0.3 ][line width=0.08]  [draw opacity=0] (3.57,-1.72) -- (0,0) -- (3.57,1.72) -- cycle    ;
\draw [color={denim}  ,draw opacity=0.2 ]   (241.56,64.78) -- (248.12,52.16) ;
\draw [shift={(249.5,49.5)}, rotate = 117.45] [fill={denim}  ,fill opacity=0.2 ][line width=0.08]  [draw opacity=0] (3.57,-1.72) -- (0,0) -- (3.57,1.72) -- cycle    ;
\draw [color={denim}  ,draw opacity=0.1 ]   (243.81,65.78) -- (250.37,53.16) ;
\draw [shift={(251.75,50.5)}, rotate = 117.45] [fill={denim}  ,fill opacity=0.1 ][line width=0.08]  [draw opacity=0] (3.57,-1.72) -- (0,0) -- (3.57,1.72) -- cycle    ;
\draw [color={denim}  ,draw opacity=0.9 ]   (226.41,60.25) -- (227.74,44.74) ;
\draw [shift={(228,41.75)}, rotate = 94.92] [fill={denim}  ,fill opacity=0.9 ][line width=0.08]  [draw opacity=0] (3.57,-1.72) -- (0,0) -- (3.57,1.72) -- cycle    ;
\draw [color={denim}  ,draw opacity=0.8 ]   (228.33,60.28) -- (231.19,45.2) ;
\draw [shift={(231.75,42.25)}, rotate = 100.73] [fill={denim}  ,fill opacity=0.8 ][line width=0.08]  [draw opacity=0] (3.57,-1.72) -- (0,0) -- (3.57,1.72) -- cycle    ;
\draw [color={denim}  ,draw opacity=0.7 ]   (230.31,60.78) -- (234.65,46.13) ;
\draw [shift={(235.5,43.25)}, rotate = 106.49] [fill={denim}  ,fill opacity=0.7 ][line width=0.08]  [draw opacity=0] (3.57,-1.72) -- (0,0) -- (3.57,1.72) -- cycle    ;
\draw [color={denim}  ,draw opacity=0.6 ]   (232.31,61.53) -- (237.48,47.32) ;
\draw [shift={(238.5,44.5)}, rotate = 109.97] [fill={denim}  ,fill opacity=0.6 ][line width=0.08]  [draw opacity=0] (3.57,-1.72) -- (0,0) -- (3.57,1.72) -- cycle    ;
\draw [color={denim}  ,draw opacity=0.5 ]   (234.81,62.28) -- (240.37,48.53) ;
\draw [shift={(241.5,45.75)}, rotate = 112.03] [fill={denim}  ,fill opacity=0.5 ][line width=0.08]  [draw opacity=0] (3.57,-1.72) -- (0,0) -- (3.57,1.72) -- cycle    ;
\draw [color={denim}  ,draw opacity=0.4 ]   (237.06,63.28) -- (243.04,49.74) ;
\draw [shift={(244.25,47)}, rotate = 113.83] [fill={denim}  ,fill opacity=0.4 ][line width=0.08]  [draw opacity=0] (3.57,-1.72) -- (0,0) -- (3.57,1.72) -- cycle    ;
\draw [color={denim}  ,draw opacity=0.3 ]   (239.56,64.03) -- (245.72,50.96) ;
\draw [shift={(247,48.25)}, rotate = 115.24] [fill={denim}  ,fill opacity=0.3 ][line width=0.08]  [draw opacity=0] (3.57,-1.72) -- (0,0) -- (3.57,1.72) -- cycle    ;
\draw [color={denim}  ,draw opacity=0.2 ]   (241.56,65.03) -- (248.12,52.41) ;
\draw [shift={(249.5,49.75)}, rotate = 117.45] [fill={denim}  ,fill opacity=0.2 ][line width=0.08]  [draw opacity=0] (3.57,-1.72) -- (0,0) -- (3.57,1.72) -- cycle    ;
\draw [color={denim}  ,draw opacity=0.1 ]   (243.81,66.03) -- (250.37,53.41) ;
\draw [shift={(251.75,50.75)}, rotate = 117.45] [fill={denim}  ,fill opacity=0.1 ][line width=0.08]  [draw opacity=0] (3.57,-1.72) -- (0,0) -- (3.57,1.72) -- cycle    ;
\draw [color={denim}  ,draw opacity=0.9 ]   (222.33,60) -- (221.01,44.49) ;
\draw [shift={(220.75,41.5)}, rotate = 85.13] [fill={denim}  ,fill opacity=0.9 ][line width=0.08]  [draw opacity=0] (3.57,-1.72) -- (0,0) -- (3.57,1.72) -- cycle    ;
\draw [color={denim}  ,draw opacity=0.8 ]   (220.42,60.03) -- (217.6,44.95) ;
\draw [shift={(217.04,42)}, rotate = 79.38] [fill={denim}  ,fill opacity=0.8 ][line width=0.08]  [draw opacity=0] (3.57,-1.72) -- (0,0) -- (3.57,1.72) -- cycle    ;
\draw [color={denim}  ,draw opacity=0.7 ]   (218.47,60.53) -- (214.18,45.88) ;
\draw [shift={(213.33,43)}, rotate = 73.67] [fill={denim}  ,fill opacity=0.7 ][line width=0.08]  [draw opacity=0] (3.57,-1.72) -- (0,0) -- (3.57,1.72) -- cycle    ;
\draw [color={denim}  ,draw opacity=0.6 ]   (216.49,61.28) -- (211.38,47.07) ;
\draw [shift={(210.36,44.25)}, rotate = 70.22] [fill={denim}  ,fill opacity=0.6 ][line width=0.08]  [draw opacity=0] (3.57,-1.72) -- (0,0) -- (3.57,1.72) -- cycle    ;
\draw [color={denim}  ,draw opacity=0.5 ]   (214.02,62.03) -- (208.51,48.28) ;
\draw [shift={(207.39,45.5)}, rotate = 68.17] [fill={denim}  ,fill opacity=0.5 ][line width=0.08]  [draw opacity=0] (3.57,-1.72) -- (0,0) -- (3.57,1.72) -- cycle    ;
\draw [color={denim}  ,draw opacity=0.4 ]   (211.79,63.03) -- (205.87,49.5) ;
\draw [shift={(204.67,46.75)}, rotate = 66.39] [fill={denim}  ,fill opacity=0.4 ][line width=0.08]  [draw opacity=0] (3.57,-1.72) -- (0,0) -- (3.57,1.72) -- cycle    ;
\draw [color={denim}  ,draw opacity=0.3 ]   (209.31,63.78) -- (203.22,50.72) ;
\draw [shift={(201.95,48)}, rotate = 64.99] [fill={denim}  ,fill opacity=0.3 ][line width=0.08]  [draw opacity=0] (3.57,-1.72) -- (0,0) -- (3.57,1.72) -- cycle    ;
\draw [color={denim}  ,draw opacity=0.2 ]   (207.33,64.78) -- (200.85,52.17) ;
\draw [shift={(199.48,49.5)}, rotate = 62.79] [fill={denim}  ,fill opacity=0.2 ][line width=0.08]  [draw opacity=0] (3.57,-1.72) -- (0,0) -- (3.57,1.72) -- cycle    ;
\draw [color={denim}  ,draw opacity=0.1 ]   (204.86,65.78) -- (198.37,53.17) ;
\draw [shift={(197,50.5)}, rotate = 62.79] [fill={denim}  ,fill opacity=0.1 ][line width=0.08]  [draw opacity=0] (3.57,-1.72) -- (0,0) -- (3.57,1.72) -- cycle    ;
\draw [color={denim}  ,draw opacity=0.9 ]   (222.33,60.25) -- (221.01,44.74) ;
\draw [shift={(220.75,41.75)}, rotate = 85.13] [fill={denim}  ,fill opacity=0.9 ][line width=0.08]  [draw opacity=0] (3.57,-1.72) -- (0,0) -- (3.57,1.72) -- cycle    ;
\draw [color={denim}  ,draw opacity=0.8 ]   (220.42,60.28) -- (217.6,45.2) ;
\draw [shift={(217.04,42.25)}, rotate = 79.38] [fill={denim}  ,fill opacity=0.8 ][line width=0.08]  [draw opacity=0] (3.57,-1.72) -- (0,0) -- (3.57,1.72) -- cycle    ;
\draw [color={denim}  ,draw opacity=0.7 ]   (218.47,60.78) -- (214.18,46.13) ;
\draw [shift={(213.33,43.25)}, rotate = 73.67] [fill={denim}  ,fill opacity=0.7 ][line width=0.08]  [draw opacity=0] (3.57,-1.72) -- (0,0) -- (3.57,1.72) -- cycle    ;
\draw [color={denim}  ,draw opacity=0.6 ]   (216.49,61.53) -- (211.38,47.32) ;
\draw [shift={(210.36,44.5)}, rotate = 70.22] [fill={denim}  ,fill opacity=0.6 ][line width=0.08]  [draw opacity=0] (3.57,-1.72) -- (0,0) -- (3.57,1.72) -- cycle    ;
\draw [color={denim}  ,draw opacity=0.5 ]   (214.02,62.28) -- (208.51,48.53) ;
\draw [shift={(207.39,45.75)}, rotate = 68.17] [fill={denim}  ,fill opacity=0.5 ][line width=0.08]  [draw opacity=0] (3.57,-1.72) -- (0,0) -- (3.57,1.72) -- cycle    ;
\draw [color={denim}  ,draw opacity=0.4 ]   (211.79,63.28) -- (205.87,49.75) ;
\draw [shift={(204.67,47)}, rotate = 66.39] [fill={denim}  ,fill opacity=0.4 ][line width=0.08]  [draw opacity=0] (3.57,-1.72) -- (0,0) -- (3.57,1.72) -- cycle    ;
\draw [color={denim}  ,draw opacity=0.3 ]   (209.31,64.03) -- (203.22,50.97) ;
\draw [shift={(201.95,48.25)}, rotate = 64.99] [fill={denim}  ,fill opacity=0.3 ][line width=0.08]  [draw opacity=0] (3.57,-1.72) -- (0,0) -- (3.57,1.72) -- cycle    ;
\draw [color={denim}  ,draw opacity=0.2 ]   (207.33,65.03) -- (200.85,52.42) ;
\draw [shift={(199.48,49.75)}, rotate = 62.79] [fill={denim}  ,fill opacity=0.2 ][line width=0.08]  [draw opacity=0] (3.57,-1.72) -- (0,0) -- (3.57,1.72) -- cycle    ;
\draw [color={denim}  ,draw opacity=0.1 ]   (204.86,66.03) -- (198.37,53.42) ;
\draw [shift={(197,50.75)}, rotate = 62.79] [fill={denim}  ,fill opacity=0.1 ][line width=0.08]  [draw opacity=0] (3.57,-1.72) -- (0,0) -- (3.57,1.72) -- cycle    ;
\draw  [draw opacity=0][shading=_6wypyjxno,_icitj8vmp] (10.33,139.58) -- (319.83,139.58) -- (319.83,120.11) .. controls (295.74,116.18) and (255.74,59.78) .. (224.23,59.84) .. controls (191.65,59.9) and (149.8,123.09) .. (117.72,123.03) .. controls (85.64,122.97) and (43.25,60.08) .. (10.94,59.84) .. controls (10.73,59.84) and (10.53,59.84) .. (10.33,59.83) -- (10.33,139.58) -- cycle ;

\draw [color={shamrockgreen}  ,draw opacity=1 ]   (117.72,123.03) .. controls (149.87,123.19) and (192.08,59.67) .. (224.23,59.84) ;
\draw [color={shamrockgreen}  ,draw opacity=1 ]   (224.23,59.84) .. controls (255.75,60) and (294.42,115) .. (319.83,120.11) ;
\draw [color={shamrockgreen}  ,draw opacity=1 ]   (10.33,59.83) .. controls (42.48,59.99) and (86.08,123) .. (117.72,123.03) ;
\draw [color={white}  ,draw opacity=1 ] [dash pattern={on 0.75pt off 0.75pt on 0.75pt off 0.75pt}]  (10.33,59.83) -- (10.33,139.58) ;
\draw [color={white}  ,draw opacity=1 ] [dash pattern={on 0.75pt off 0.75pt on 0.75pt off 0.75pt}]  (319.83,120.11) -- (319.83,139.58) ;
\draw [color={white}  ,draw opacity=1 ]   (10.33,139.58) -- (319.83,139.58) ;

\draw (220,99.4) node [anchor=north west][inner sep=0.75pt]   [font=\Large] {$\Omega $};
\draw (41.08,88.07) node [anchor=north west][inner sep=0.75pt]  [color={shamrockgreen}  ,opacity=1 ] [font=\large] {$\Gamma _{I}$};
\draw (17.5,42.5) node [anchor=north west][inner sep=0.75pt]  [color={denim}  ,opacity=1 ] [font=\large] {$\Sigma _{I}^{\varepsilon }$};
\draw (115,97.5) node [anchor=north west][inner sep=0.75pt]  [color={denim}  ,opacity=1 ]  {$n$};
\draw (221.5,35) node [anchor=north west][inner sep=0.75pt]  [color={denim}  ,opacity=1 ]  {$n$};
\draw  [fill={white}  ,fill opacity=1 ]  (138.5,24.5) -- (173,24.5) -- (173,35) -- (138.5,35) -- cycle  ;
\draw (140,25) node [anchor=north west][inner sep=0.75pt]  [font=\normalsize]  {$\Gamma _{I} \in C^{1,1}$};

\end{tikzpicture}\vspace{-1mm}

        \caption{An insulating layer in the case of a $C^{1,1}$-regular insulated boundary $\Gamma_I$~is~depicted. Gaps and self-intersections in $\Sigma_{I}^{\varepsilon}\coloneqq \{s+tn(s)\mid s\in  \Gamma_I\,,\;t\in (0,\varepsilon \mathtt{d}(s)]\}$ are precluded due to the Lipschitz regularity of the outward unit normal vector field $n\colon \Gamma_I\to \mathbb{S}^{d-1}$ (\textit{cf}.\ Remark \ref{rem:examples}(\protect\hyperlink{rem:examples.i}{i})).}
        \label{fig:placeholder}
    \end{figure}
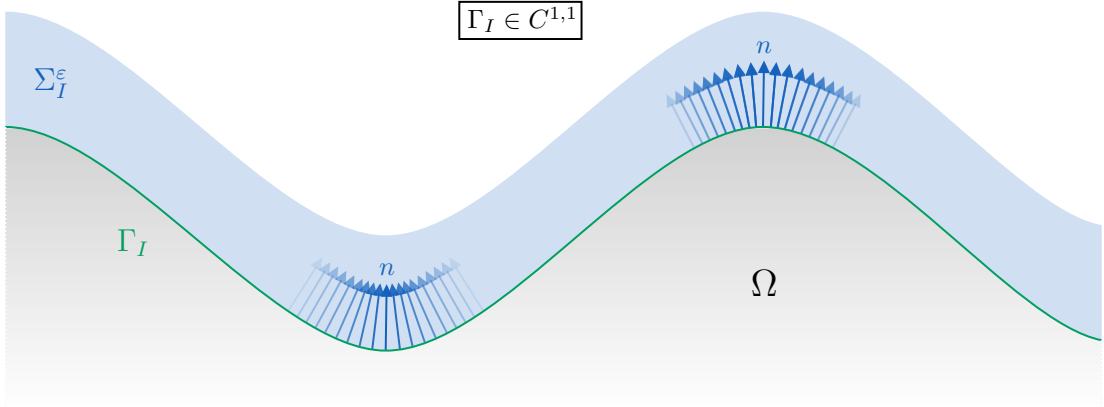\vspace{-5mm}
    
    \begin{figure}[H]
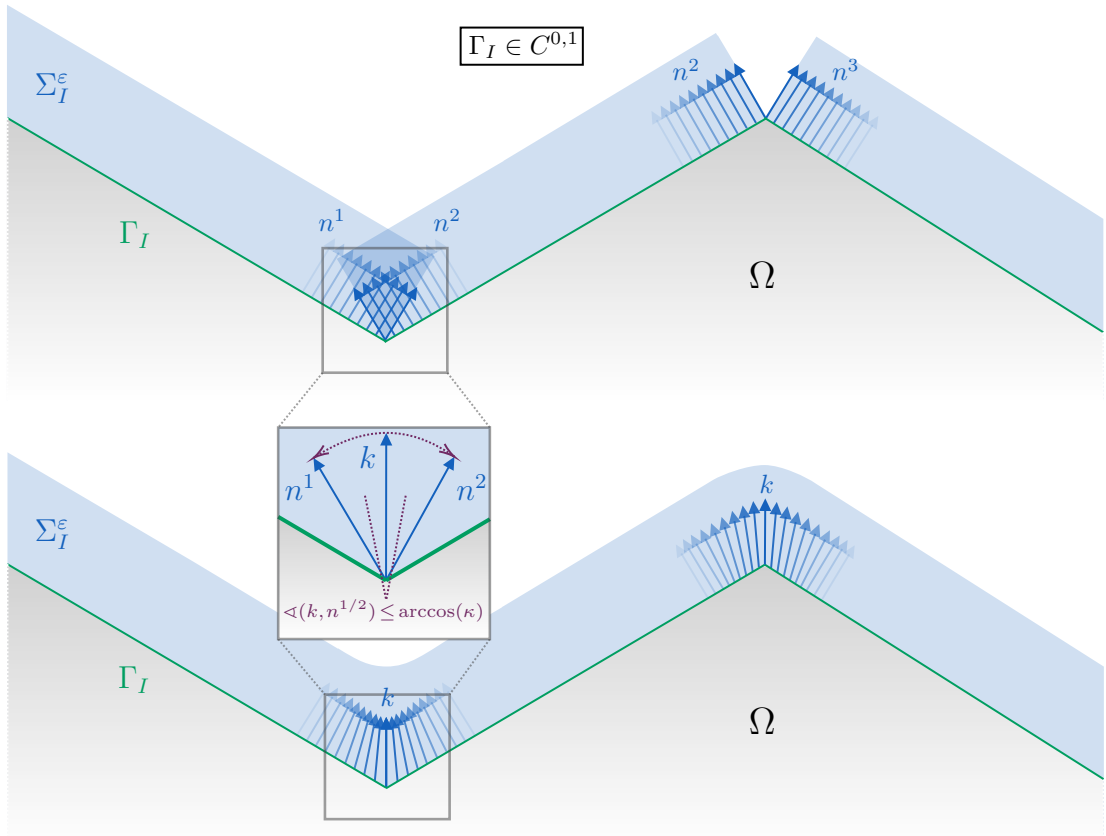

        \centering
        
        \tikzset{every picture/.style={line width=0.75pt}}

         \tikzset {_cbfb50p4s/.code = {\pgfsetadditionalshadetransform{ \pgftransformshift{\pgfpoint{0 bp } { 0 bp }  }  \pgftransformrotate{-90 }  \pgftransformscale{2 }  }}}
        \pgfdeclarehorizontalshading{_00xpoda8h}{150bp}{rgb(0bp)=(0.89,0.89,0.89);
        	rgb(37.5bp)=(0.89,0.89,0.89);
        	rgb(37.5bp)=(0.86,0.86,0.86);
        	rgb(37.5bp)=(0.82,0.82,0.82);
        	rgb(62.5bp)=(1,1,1);
        	rgb(100bp)=(1,1,1)}
        \tikzset{every picture/.style={line width=0.75pt}}



 \tikzset {_x35ti359s/.code = {\pgfsetadditionalshadetransform{ \pgftransformshift{\pgfpoint{0 bp } { 0 bp }  }  \pgftransformrotate{-90 }  \pgftransformscale{2 }  }}}
\pgfdeclarehorizontalshading{_k6x3mygon}{150bp}{rgb(0bp)=(0.89,0.89,0.89);
	rgb(37.5bp)=(0.89,0.89,0.89);
	rgb(37.5bp)=(0.86,0.86,0.86);
	rgb(37.5bp)=(0.82,0.82,0.82);
	rgb(62.5bp)=(1,1,1);
	rgb(100bp)=(1,1,1)}
  


 \tikzset {_4hujh30ho/.code = {\pgfsetadditionalshadetransform{ \pgftransformshift{\pgfpoint{0 bp } { 0 bp }  }  \pgftransformrotate{-90 }  \pgftransformscale{2 }  }}}
\pgfdeclarehorizontalshading{_68sxcjr5p}{150bp}{rgb(0bp)=(0.89,0.89,0.89);
	rgb(37.5bp)=(0.89,0.89,0.89);
	rgb(37.5bp)=(0.86,0.86,0.86);
	rgb(37.5bp)=(0.82,0.82,0.82);
	rgb(62.5bp)=(1,1,1);
	rgb(100bp)=(1,1,1)}
\tikzset{every picture/.style={line width=0.75pt}}


\vspace{-1mm}

        \caption{Two insulating layers   in the case of a piece-wise flat insulated boundary~$\Gamma_I$~are~\mbox{depicted}: 
        \textit{top:} discontinuities of $n\colon \Gamma_{I}\to  \mathbb{S}^{d-1}$ lead to gaps (\textit{i.e.}, no insulating material is applied)~or~self-intersections (\textit{i.e.}, insulating material is applied twice) in  $\widetilde{\Sigma}_{I}^{\varepsilon} \coloneqq \{s+t n(s)\mid s\in \Gamma_I\,,\;t\in (0,\varepsilon \mathtt{d}(s)]\}$; \textit{bottom:} gaps and self-intersections in $\Sigma_{I}^{\varepsilon}\coloneqq \{s+tk(s)\mid s\in  \Gamma_I\,,\;t\in (0,\varepsilon \mathtt{d}(s)]\}$ are precluded by replacing $n\colon\hspace{-0.1em} \Gamma_{I}\hspace{-0.1em}\to\hspace{-0.1em} \mathbb{S}^{d-1}$ by a unit-length continuous (globally) transversal~vector~field~${k\colon \hspace{-0.1em}\Gamma_{I}\hspace{-0.1em}\to\hspace{-0.1em}\mathbb{S}^{d-1}}$, which varies to $n\colon \Gamma_{I}\to \mathbb{S}^{d-1}$~up~to~a~maximal~angle~of~$\arccos(\kappa)$ (\textit{cf}.\ \eqref{eq:transversal}).}
        \label{fig:transversality}
    \end{figure}\newpage

      \section{Model reduction for the thickness of the insulating layer}\label{sec:modelling}

    \hspace{5mm}Let $k\in \smash{(C^0(\Gamma_I))^d}$ be a continuous (globally) transversal vector field of $\Omega$ with transversality constant $\kappa\hspace{-0.175em}\in\hspace{-0.175em} (0,1]$, the existence of which is ensured by Theorem \ref{thm:ex_transversal} and which,~in~the~case~${\Gamma_I\hspace{-0.175em}\in \hspace{-0.175em} C^{1,1}}$, is always fixed as $k\hspace{-0.175em}=\hspace{-0.175em}n\hspace{-0.175em}\in\hspace{-0.175em} \smash{(C^0(\Gamma_I))^d}$ (\textit{cf}.\ Remark \ref{rem:examples}(\hyperlink{rem:examples.i}{i})) (so that the~\mbox{transversality}~\mbox{constant}~is~${\kappa\hspace{-0.15em}=\hspace{-0.15em}1}$).
    Denote by $\mathtt{d}\hspace{-0.05em}\in\hspace{-0.05em} L^\infty(\Gamma_I)$ the (to be determined) non-negative  distribution function~(in~\mbox{direction}~of~$k$).\medskip
    
    Then, for a fixed, but arbitrarily small parameter
     $\varepsilon\hspace{-0.1em}\in\hspace{-0.1em} (0,\varepsilon_0)$, we define the \textit{\mbox{insulating}~layer} (in direction of $k$ and of local \textit{`thickness'} $\varepsilon \mathtt{d}$) $\Sigma_{I}^{\varepsilon}\subseteq \mathbb{R}^d$, the \textit{interacting insulation boundary} $\Gamma_{I}^{\varepsilon}\subseteq \partial\Sigma_{I}^{\varepsilon}$,  and the \textit{insulated conducting body} $\Omega_{I}^{\varepsilon}\subseteq \mathbb{R}^d$, respectively, via
     \begin{subequations}\label{def:some_notation}
     \begin{align}\label{def:some_notation.1}
        \Sigma^{\varepsilon}_{I}&\coloneqq\Sigma^{\varepsilon}_{I}(\mathtt{d})\coloneqq \big\{s+tk(s)\mid s\in \Gamma_I\,,\;t\in [0,\varepsilon \mathtt{d}(s))\big\}\,,\\[-0.25mm]\label{def:some_notation.2}
        \Gamma^{\varepsilon}_{I}&\coloneqq \Gamma^{\varepsilon}_{I}(\mathtt{d})\hspace{0.5mm}\coloneqq \big\{s+\varepsilon \mathtt{d}(s)k(s)\mid s\in \Gamma_{I}\big\}\,,\\\label{def:some_notation.3}
        \Omega^{\varepsilon}_{I}&\coloneqq \Omega^{\varepsilon}_{I}(\mathtt{d})\coloneqq \overline{\Omega}\cup\Sigma^{\varepsilon}_{I}\,.
    \end{align}
    \end{subequations} 

    Furthermore, let $f\in L^2(\Omega)$ be a given \emph{heat source density} (located in the thermally conducting body $\Omega$), $g\in H^{-\smash{\frac{1}{2}}}(\Gamma_N)$ a given \emph{heat flux} (across the Neumann boundary $\Gamma_N$), 
$u_D\in H^{\smash{\frac{1}{2}}}(\Gamma_D)$ a given \emph{temperature distribution} (at the Dirichlet boundary $\Gamma_D$), $u_{\infty}\in H^1(\mathbb{R}^d\setminus \overline{\Omega})$ a given \emph{ambient temperature} (of the surrounding medium in $\mathbb{R}^d\setminus \overline{\Omega}$), $\lambda>0$ the material-specific \emph{thermal conductivity} of the conducting body, and $\beta>0$ a given system-specific \emph{heat transfer coefficient}.\medskip

    Then, we consider the \emph{heat loss} functional $\smash{E}_\varepsilon^{\mathtt{d}}\colon 
    H^1(\Omega^{\varepsilon}_{I})\to \mathbb{R}\cup\{+\infty\}$, for every $v_\varepsilon\in H^1(\Omega^{\varepsilon}_{I})$ defined by 
    \begin{align}\label{eq:Evarh}
\smash{E}_\varepsilon^{\mathtt{d}}(v_\varepsilon)\coloneqq 
\left\{\begin{aligned}
    &\tfrac{\lambda}{2}\|\nabla v_\varepsilon\|_{\Omega}^2+\tfrac{\varepsilon}{2}\|\nabla v_\varepsilon\|_{\smash{\Sigma^{\varepsilon}_{I}}}^2+\tfrac{\beta}{2}\|v_\varepsilon-u_{\infty}\|_{\Gamma_{I}^{\varepsilon}}^2\\&-(f,v_\varepsilon)_{\Omega}-\langle g,v_{\varepsilon}\rangle_{\smash{H^{\smash{\frac{1}{2}}}(\Gamma_N)}}+I_{\{u_D\}}^{\Gamma_D}(v_{\varepsilon})\,,
\end{aligned}\right.
    \end{align} 
    where the indicator functional $I_{\{u_D\}}^{\Gamma_D}\colon H^{\frac{1}{2}}(\partial\Omega)\to \mathbb{R}\cup\{+\infty\}$, for every $\widehat{v}\in H^{\frac{1}{2}}(\partial\Omega)$, is defined~by
    \begin{align*}
        I_{\{u_D\}}^{\Gamma_D}(\widehat{v})\coloneqq\begin{cases}
            0&\text{ if }\widehat{v}=u_D\text{ a.e.\ on }\Gamma_D\,,\\
            +\infty&\text{ else}\,.
        \end{cases}
    \end{align*}
    
    The fixed, but arbitrarily small parameter $\varepsilon\in (0,\varepsilon_0)$ in  \eqref{def:some_notation} and  \eqref{eq:Evarh} plays two~\mbox{different}~roles: 
    \begin{itemize}[noitemsep,topsep=2pt,leftmargin=!,labelwidth=\widthof{(a)},font=\itshape]
        \item[(a)] In the definition of the insulating layer $\Sigma_{I}^{\varepsilon}$ (\textit{cf}.\ \eqref{def:some_notation}) together with the (to be determined) distribution function $\mathtt{d}\in L^\infty(\Gamma_I)$, it influences the local \textit{`thickness'} $\varepsilon\mathtt{d}$ of the~insulating~layer~$\Sigma_{I}^{\varepsilon}$;
        \item[(b)] In the heat loss functional \eqref{eq:Evarh}, it represents the thermal conductivity of the insulating material, which~--in this idealised situation-- is assumed to be arbitrarily small (\textit{i.e.}, ${\varepsilon\ll 1}$).\medskip
    \end{itemize}
    
    Since the heat loss functional \eqref{eq:Evarh} is proper, strictly convex, weakly coercive, and lower~semi-continuous, for given parameter $\varepsilon\hspace{-0.1em}\in\hspace{-0.1em} (0,\varepsilon_0)$ and distribution function $\mathtt{d}\hspace{-0.1em}\in \hspace{-0.1em}L^\infty(\Gamma_I)$,~the~\mbox{direct}~method in the calculus of variations 
    yields the existence of a unique temperature distribution~$u_\varepsilon^{\mathtt{d}}\in H^1(\Omega^{\varepsilon}_{I})$ minimizing \eqref{eq:Evarh}, 
    which formally satisfies the  Euler--Lagrange~equations~(\textit{cf}.~Figure~\ref{fig:domain}(\textit{left})) 
\begin{subequations}\label{eq:ELE_Eepsh}
        \begin{alignat}{2}
            -\lambda \Delta u_\varepsilon^{\mathtt{d}}&=f&&\quad \text{ a.e.\ in }\Omega\,,\label{eq:ELE_Eepsh.a}\\
             u_\varepsilon^{\mathtt{d}}&=u_D&&\quad \text{ a.e.\ on }\Gamma_D\,,\label{eq:ELE_Eepsh.b}\\
           \lambda \nabla u_\varepsilon^{\mathtt{d}}\cdot n&=g&&\quad \text{ a.e.\ on }\Gamma_N\,,\label{eq:ELE_Eepsh.c}\\
             -\varepsilon \Delta u_\varepsilon^{\mathtt{d}}&=0&&\quad \text{ a.e.\ in }\Sigma^{\varepsilon}_{I}\,,\label{eq:ELE_Eepsh.d}\\
            \varepsilon \nabla u_\varepsilon^{\mathtt{d}}\cdot n_\varepsilon^{\mathtt{d}}+\beta\{u_\varepsilon^{\mathtt{d}}-u_{\infty}\}  &= 0&&\quad \text{ a.e.\ on }\Gamma^{\varepsilon}_{I}\,,\label{eq:ELE_Eepsh.e}\\
           \nabla u_{\varepsilon}^{\mathtt{d}}\cdot n_{\varepsilon}^{\mathtt{d}}&=0&&\quad \text{ a.e.\ on } \partial\Sigma_{I}^{\varepsilon}\setminus(\Gamma_I\cup \Gamma^{\varepsilon}_{I})\,,\label{eq:ELE_Eepsh.f}\\ 
           \lambda \nabla (u^{\mathtt{d}}_\varepsilon|_{\Omega})\cdot n&=-\varepsilon \nabla (u^{\mathtt{d}}_\varepsilon|_{\Sigma_{I}^{\varepsilon}})\cdot  n_{\varepsilon}^{\mathtt{d}}&&\quad \text{ a.e.\ on } \Gamma_I\,,\label{eq:ELE_Eepsh.g}
        \end{alignat}
    \end{subequations}
    where $n_{\varepsilon}^{\mathtt{d}}\colon \partial\Sigma_{I}^{\varepsilon}\to \mathbb{S}^{d-1}$ denotes the outward unit normal vector field of the insulating layer $\Sigma_{I}^{\varepsilon}$.\pagebreak  

    From a physical perspective, the Euler--Lagrange equations \eqref{eq:ELE_Eepsh}  have the following interpretation (\textit{cf}.\ Figure~\ref{fig:domain}(\textit{left})):
    \begin{itemize}[noitemsep,topsep=2pt,leftmargin=!,labelwidth=\widthof{$\bullet$}]
        \item[$\bullet$] The steady-state heat conduction equations
        \eqref{eq:ELE_Eepsh.a} and \eqref{eq:ELE_Eepsh.d} express that the thermally~conducting body $\Omega$ and the insulating layer $\Sigma_{I}^{\varepsilon}$ have different conductivities as well as heat sources: in the thermally conducting body $\Omega$, the material-dependent conductivity $\lambda>0$ is fixed;  in the insulating layer $\Sigma_{I}^{\varepsilon}$, the conductivity $\varepsilon>0$ --in this idealised situation-- is~arbitrarily~small. 
        Moreover, there is no heat source present in the insulating layer $\Sigma_{I}^{\varepsilon}$;

        \item[$\bullet$] On the (possibly empty) (non-)insulated boundary parts $\Gamma_D$ and $\Gamma_N$, we impose the Dirichlet boundary condition  \eqref{eq:ELE_Eepsh.b} (\textit{i.e.}, the temperature distribution $u_{\varepsilon}^{\mathtt{d}}$ at $\Gamma_D$ is fixed to  $u_D$)~or~the  Neumann boundary condition \eqref{eq:ELE_Eepsh.c} (\textit{i.e.}, the heat flux $\lambda\nabla u_{\varepsilon}^{\mathtt{d}}\cdot n$ across $\Gamma_N$~is~fixed~to~$g$),~\mbox{respectively};

        \item[$\bullet$] On the (non-empty) insulated boundary part $\Gamma_I$, we impose the Robin boundary~\mbox{condition}~\eqref{eq:ELE_Eepsh.e}, which states that conductive heat flux $\varepsilon \nabla u_{\varepsilon}^{\mathtt{d}}\cdot n_{\varepsilon}^{\mathtt{d}}$ across the interacting insulation boundary $\Gamma_{I}^{\varepsilon}$ (from $\Sigma_{I}^{\varepsilon}$ to $\mathbb{R}^d\setminus \Omega_{I}^{\varepsilon}$)
        is proportional to the difference between the temperature distribution $u_{\varepsilon}^{\mathtt{d}}$ and the ambient temperature $u_\infty$ at the interacting insulation boundary $\Gamma_{I}^{\varepsilon}$. The proportionality constant is given via the system-specific heat transfer coefficient $\beta>0$;

        \item[$\bullet$] On the remaining boundary parts of the insulating layer $\partial\Sigma_{I}^{\varepsilon}\setminus(\Gamma_I\cup \Gamma^{\varepsilon}_{I})$ --interpreted as \textit{`artificial'} boundary parts--  we impose the homogeneous Neumann boundary condition \eqref{eq:ELE_Eepsh.f},  which models that heat flux $\lambda\nabla u^{\mathtt{d}}_\varepsilon\cdot n$ across these boundary parts is zero and, as a consequence, that the  heat flow can only 
       be transverse to these boundary parts; modelling \textit{`perfect insulation'}.

        \item[$\bullet$] The transmission condition \eqref{eq:ELE_Eepsh.g} imposes that the heat flux $\lambda\nabla (u_\varepsilon^{\mathtt{d}}|_{\Omega})\cdot n$ out of the thermally conducting body $\Omega$ has to be the same  as the heat flux $-\varepsilon \nabla (u^{\mathtt{d}}_\varepsilon|_{\smash{\Sigma_{I}^{\varepsilon}}})\cdot n^{\mathtt{d}}_\varepsilon$
        into the insulating~layer $\Sigma_{I}^{\varepsilon}$. Since
        the conductivity of the insulating layer $\Sigma_{I}^{\varepsilon}$ is arbitrarily small (\textit{i.e.}, $\varepsilon\ll 1$), 
        the temperature gradient  $\nabla u^{\mathtt{d}}_\varepsilon\cdot n^{\mathtt{d}}_\varepsilon$ must be proportionally larger than $\nabla u^{\mathtt{d}}_\varepsilon\cdot n$ to carry~the~same~flux.\medskip
    \end{itemize}

    In the case 
    $k\in \smash{(C^{0,1}(\Gamma_I))^d}$ and $\mathtt{d}\in\smash{C^{0,1}(\Gamma_I)}$ with $\mathtt{d}\ge \mathtt{d}_{\textup{min}}$ a.e.\ in $\Gamma_I$, for a constant $\mathtt{d}_{\textup{min}}>0$, if either $\Gamma_I\in C^{1,1}$ with $k=n$ or $\Gamma_I$ is piece-wise flat with $d\leq 4$,
    passing~to~the~limit~(as~${\varepsilon\to 0^{+}}$) with a family of trivial extensions to $L^2(\mathbb{R}^d)$ of the heat loss functionals ${\smash{E}_{\varepsilon}^{\mathtt{d}}\colon H^1(\Omega_{I}^{\varepsilon})\to \mathbb{R}\cup\{+\infty\}}$, $\varepsilon>0$, 
    in the sense of $\Gamma(L^2(\mathbb{R}^d))$-convergence (\textit{cf}.\ Theorem \ref{thm:main}), we arrive at  the $\Gamma$-limit functional $\smash{E}^{\mathtt{d}}\colon \hspace{-0.1em}H^1(\Omega)\to \mathbb{R}\cup\{+\infty\}$, for every $v\in H^1(\Omega)$ defined by
    \begin{align}\label{eq:Eh}
       \smash{E}^{\mathtt{d}}(v)\coloneqq\left\{\begin{aligned}
        &\tfrac{\lambda}{2}\|\nabla v\|_{\Omega}^2+\tfrac{\beta}{2}\|(1+\beta (k\cdot n)\mathtt{d})^{-\smash{\frac{1}{2}}}\{v-u_{\infty}\}\|_{\Gamma_I}^2\\&-(f,v)_{\Omega}-\langle g,v\rangle_{\smash{H^{\smash{\frac{1}{2}}}(\Gamma_N)}}+I_{\{u_D\}}^{\Gamma_D}(v)\,.
       \end{aligned}\right.
    \end{align}
    In the $\Gamma$-limit functional \eqref{eq:Eh}, the second term is the~\textit{`interface'} heat loss,  accounting~for~the~inter-\-action of the system  with the exterior at the insulated boundary $\Gamma_I$,  mediated by the scaled distribution function $(k\cdot n)\mathtt{d}$ (\textit{i.e.}, large local temperature differences between the system~and~the exterior at insulated boundary need to be compensated with a large locally~scaled~\mbox{distribution}~\mbox{function}).\medskip\enlargethispage{4mm} 
   
    Since the $\Gamma$-limit functional \eqref{eq:Eh} is proper, strictly convex, weakly coercive, and lower~semi-continuous, for given distribution function $\mathtt{d}\in L^\infty(\Gamma_I)$, the
    direct method in the calculus of variations yields the existence of a unique temperature distribution $u^{\mathtt{d}}\in H^1(\Omega)$, which  formally satisfies the Euler--Lagrange equations (\textit{cf}.~Figure~\ref{fig:domain}(\textit{right}))
    \begin{subequations}  \label{eq:ELE_Eh}
        \begin{alignat}{2}
            -\kappa \Delta u^{\mathtt{d}}&=f&&\quad \text{ a.e.\ in }\Omega\,, \label{eq:ELE_Eh.a}\\
            \smash{u^{\mathtt{d}}} &=u_D&&\quad \text{ a.e.\ on }\Gamma_D\,, \label{eq:ELE_Eh.b}\\
            \nabla u^{\mathtt{d}}\cdot  n&=g&&\quad \text{ a.e.\ on }\Gamma_N\,, \label{eq:ELE_Eh.c}\\
              \lambda(1+\beta (k\cdot n)\mathtt{d})\nabla u^{\mathtt{d}}\cdot n+\beta \{u^{\mathtt{d}}-u_{\infty}\}&=0&&\quad \text{ a.e.\ on }\Gamma_I\,, \label{eq:ELE_Eh.d}
        \end{alignat}
    \end{subequations}
    where the boundary condition \eqref{eq:ELE_Eh.d} is still of Robin type, but with (distribution-dependent) variable coefficient $\lambda(1+\beta (k\cdot n)\mathtt{d})$.\medskip\enlargethispage{0.5mm}
     
    We are interested in determining the non-negative distribution function $\mathtt{d}\in  L^\infty(\Gamma_I)$ that provides the best insulating performance, once
the total amount of insulating material~is~fixed. Note that  $\mathtt{d}\in L^\infty(\Gamma_I)$ specifies the distribution of the insulating material in~direction~of~${k\in  (C^0(\partial\Omega))^d}$.\newpage \noindent In practice, however, it is often more convenient to describe the distribution of the insulating material in  direction of  $n\in (L^\infty(\partial\Omega))^d$. The distribution of the insulating material in the direction of $n\in (L^\infty(\partial\Omega))^d$, denoted by $\widetilde{\mathtt{d}}\in L^\infty(\Gamma_I)$, can be computed from $\mathtt{d}\in  L^\infty(\Gamma_I)$~via~(\textit{cf}.~Figure~\ref{fig:thickness}) 
\begin{align}\label{eq:relation_h_h_tilde}
    \smash{\widetilde{\mathtt{d}}=(k\cdot n)\mathtt{d}\quad \text{ a.e.\ on }\Gamma_I\,.}
\end{align}
For this reason, the used total amount of the insulating material should be measured~in~the~weighted norm $\|(k\cdot n)(\cdot)\|_{1,\Gamma_I}$ instead of $\|\cdot\|_{1,\Gamma_I}$, that is in terms of $\widetilde{\mathtt{d}}\in L^\infty(\Gamma_I)$ rather than $\mathtt{d}\in  L^\infty(\Gamma_I)$.\vspace{-2mm}
\begin{figure}[H]
        \centering

  
\tikzset {_ilnl6shss/.code = {\pgfsetadditionalshadetransform{ \pgftransformshift{\pgfpoint{0 bp } { 0 bp }  }  \pgftransformrotate{-270 }  \pgftransformscale{2 }  }}}
\pgfdeclarehorizontalshading{_jxwf7ldi5}{150bp}{rgb(0bp)=(0.89,0.89,0.89);
rgb(37.5bp)=(0.89,0.89,0.89);
rgb(37.5bp)=(1,1,1);
rgb(50bp)=(0.86,0.86,0.86);
rgb(62.5bp)=(0.82,0.82,0.82);
rgb(100bp)=(0.82,0.82,0.82)}

  
\tikzset {_clz09natx/.code = {\pgfsetadditionalshadetransform{ \pgftransformshift{\pgfpoint{0 bp } { 0 bp }  }  \pgftransformrotate{-270 }  \pgftransformscale{2 }  }}}
\pgfdeclarehorizontalshading{_tlqcj11p0}{150bp}{rgb(0bp)=(0.89,0.89,0.89);
rgb(37.5bp)=(0.89,0.89,0.89);
rgb(37.5bp)=(1,1,1);
rgb(50bp)=(0.86,0.86,0.86);
rgb(62.5bp)=(0.82,0.82,0.82);
rgb(100bp)=(0.82,0.82,0.82)}

  
\tikzset {_f5mtjkovw/.code = {\pgfsetadditionalshadetransform{ \pgftransformshift{\pgfpoint{0 bp } { 0 bp }  }  \pgftransformrotate{-270 }  \pgftransformscale{2 }  }}}
\pgfdeclarehorizontalshading{_mch5ik1ol}{150bp}{rgb(0bp)=(0.89,0.89,0.89);
rgb(37.5bp)=(0.89,0.89,0.89);
rgb(37.5bp)=(1,1,1);
rgb(50bp)=(0.86,0.86,0.86);
rgb(62.5bp)=(0.82,0.82,0.82);
rgb(100bp)=(0.82,0.82,0.82)}
\tikzset{every picture/.style={line width=0.75pt}} 

\vspace{-1mm}
        \caption{Sketch of relation between a distribution function $\mathtt{d}\colon \Gamma_I\to [0,+\infty)$ (in direction~of~$k$) and the associated distribution function $\widetilde{\mathtt{d}}\coloneqq (k\cdot n)\mathtt{d}\colon \Gamma_I\to [0,+\infty)$ (in direction of $n$).}
        \label{fig:thickness}
    \end{figure}\vspace{-2.5mm}
    
In light of these considerations, for a fixed amount of  the insulating  material $m>0$, we seek a distribution function $\mathtt{d}\in L^\infty(\Gamma_I)$ (in direction~of~$k$) in the class\vspace{-0.25mm} 
    \begin{align*}
        \mathcal{H}_{I}^m\coloneqq \big\{\overline{\mathtt{d}}\in L^1(\Gamma_I) \mid \overline{\mathtt{d}}\ge 0\text{ a.e.\ on }\Gamma_I\,,\; \|(k\cdot n) \overline{\mathtt{d}}\|_{1,\Gamma_I}=m\big\}\,,
    \end{align*}
    or equivalently (since $\smash{\widetilde{(\cdot)}\coloneqq(\mathtt{d}\mapsto \widetilde{\mathtt{d}})\colon \mathcal{H}_{I}^m\to \widetilde{\mathcal{H}}_{I}^m}$ is a bijection),  a distribution function $\smash{\widetilde{\mathtt{d}}\in L^\infty(\Gamma_I)}$ (in direction~of~$n$) in the class\vspace{-0.25mm}
    \begin{align*}
        \widetilde{\mathcal{H}}_{I}^m\coloneqq \big\{\overline{\mathtt{d}}\in L^1(\Gamma_I)\mid \overline{\mathtt{d}}\ge 0\text{ a.e.\ on }\Gamma_I\,,\; \|\overline{\mathtt{d}}\|_{1,\Gamma_I}=m\big\}\,,
    \end{align*} 
    along with a temperature distribution $\smash{u^{\mathtt{d}}\in H^1(\Omega)}$ that jointly minimize the heat loss, \textit{i.e.}, abbreviating $\widetilde{E}^{\overline{d}}\coloneqq \smash{E}^{\smash{\overline{\mathtt{d}}/(k\cdot n)}}$, one has that\vspace{-0.25mm}
    \begin{align}\label{eq:double_min}
        (u^{\mathtt{d}},\mathtt{d})^\top=\big(u^{\mathtt{d}},\tfrac{\widetilde{\mathtt{d}}}{k\cdot n}\big)^\top\in \underset{(v,\overline{\mathtt{d}})^\top\in H^1(\Omega)\times \mathcal{H}_m}{\textup{arg\,min}}
        {\big\{\smash{E}^{\overline{\mathtt{d}}}(v)\big\}}=\underset{(v,\overline{\mathtt{d}})^\top\in H^1(\Omega)\times \smash{\widetilde{\mathcal{H}}_m}}{\textup{arg\,min}}
        {\big\{\widetilde{E}^{\overline{d}}(v)\big\}}\,.\\[-6.5mm]\notag
    \end{align} 
    In the case $\Gamma_D=\emptyset$, for instance, if
     a \textit{non-trivial net heat input}~\mbox{\textit{condition}}~is~met, \textit{i.e.},~we~have~that\enlargethispage{6.5mm}\vspace{-0.25mm}
    \begin{align}\label{noncompatibility}
        \smash{Q_{\texttt{tot}}\coloneqq(f,1)_{\Omega}+\langle g,1\rangle_{H^{\smash{\frac{1}{2}}}(\Gamma_N)}\neq 0\,,}
    \end{align}
    according to \cite{AKKInsulationNumerics2} (or \cite[Thm.\ 4.1]{PietraNitschScalaTrombetti2021}, in the case of pure insulation (\textit{i.e.}, $\Gamma_I=\partial\Omega$) and trivial ambient temperature (\textit{i.e.}, $u_{\infty}=0$)),   
    then a minimizing pair in \eqref{eq:double_min} exists and~meets~the~relation\vspace{-0.25mm}
    \begin{align*}
        \smash{\widetilde{\mathtt{d}}=(k\cdot n)\mathtt{d}=\tfrac{1}{\beta c_{u^{\mathtt{d}}}}\max\{0,\vert u^{\mathtt{d}}-u^{\infty}\vert -c_{u^{\mathtt{d}}}\}\quad\text{ a.e.\ on }\Gamma_I\,,}
    \end{align*}
    where $c_{u^{\mathtt{d}}}>0$ is a constant, which is implicitly, but uniquely determined via (\textit{cf}.\ \cite[Lem.\ 4.1]{PietraNitschScalaTrombetti2021})\vspace{-0.25mm}
    \begin{align*}
        \smash{c_{u^{\mathtt{d}}}=\tfrac{1}{m\beta}\|\max\{0,\vert u^{\mathtt{d}}-u^{\infty}\vert -c_{u^{\mathtt{d}}}\}\|_{1,\Gamma_I}\,.}
    \end{align*}
    In the non-trivial net heat input condition \eqref{noncompatibility}, the volume integral $(f,1)_{\Omega}$ represents~the~\textit{total volumetric heat generation} inside the thermally conducting body $\Omega$ and the (generalized)~surface integral  $\langle g,1\rangle_{H^{\smash{\frac{1}{2}}}(\Gamma_N)}$ the \textit{total prescribed boundary heat flux} through the Neumann~\mbox{boundary}~part~$\Gamma_N$.
    Therefore, the non-trivial net heat input condition \eqref{noncompatibility}~has~the~\mbox{following}~physical~\mbox{implication}:
    By Gauss' theorem and \eqref{eq:ELE_Eh.d}, the net heat input $Q_{\texttt{tot}}$ equals to the \textit{net convective~heat~loss~through}~$\Gamma_I$, \textit{i.e.},  $Q_{\texttt{conv}}\coloneqq(\beta(1+\beta\widetilde{d})^{-1}\{u-u_\infty\},1)_{\Gamma_I}$ and, by the non-trivial net heat input condition~\eqref{noncompatibility},~is non-trivial. On the contrary, if $Q_{\texttt{conv}}=0$, there would be no heat loss for~the~\mbox{insulation}~to~\mbox{reduce}.
    
    \section{Auxiliary technical tools}\label{sec:tools} 

    \hspace{5mm}In this section, we prove auxiliary technical tools that are needed for the $\Gamma$-convergence analysis in Section \ref{sec:gamma_convergence}. To this end, for the remainder of the paper, we assume that $k\in (C^{0,1}(\Gamma_I))^d$ is a Lipschitz continuous (globally) transversal vector field of $\Omega$ with transversality~constant~$k\in(0,1]$, the existence of which is ensured by Theorem \ref{thm:ex_transversal}. Moreover, if not otherwise specified,~let~${\mathtt{d}\in L^\infty(\Gamma_I)}$  be a given distribution function in transversal direction (\textit{i.e.}, in direction of $k$) and $\widetilde{\mathtt{d}}\in L^\infty(\Gamma_I)$ the associated distribution function in normal direction (\textit{i.e.}, in direction of $n$); related via \eqref{eq:relation_h_h_tilde}. Then, for these two distribution functions, we employ the notation~introduced~in~\eqref{def:some_notation}. 

    \subsection{Approximative transformation formula} 
  
    \hspace{5mm}The assumption $k\in (C^{0,1}(\Gamma_I))^d$ along with its (global) transversality property \eqref{eq:transversal} ensures the existence of a constant $\varepsilon_0>0$ such that for every $\varepsilon\in (0,\varepsilon_0)$, the (global) parametrization  $\Phi_\varepsilon \colon  \hspace{-0.05em}D_{I}^{\varepsilon} \hspace{-0.05em}\coloneqq  \hspace{-0.05em} \bigcup_{s\in \partial\Omega}{\{s\}\times   [0,\varepsilon \mathtt{d}(s))} \hspace{-0.05em}\to\hspace{-0.05em} \Sigma_{I}^{\varepsilon}$ of the insulating layer $\Sigma_{I}^{\varepsilon}$, for~every~${(s,t)^\top \hspace{-0.05em}\in \hspace{-0.05em} D_{I}^{\varepsilon}}$~\mbox{defined}~by
    \begin{align}\label{def:parametrization}
        \Phi_\varepsilon (s,t)\coloneqq s+tk(s)\,,
    \end{align}
    is bi-Lipschitz continuous (see \cite[p.\ 633, 634]{HMT07}, for a detailed discussion), \textit{i.e.}, Lipschitz continuous and bijective with Lipschitz continuous inverse. 
    By means of the global~parametrization~\eqref{def:parametrization}, one~can~prove the following \textit{`approximative'} transformation formula; relating volume~integrals~with respect \hspace{-0.1mm}to \hspace{-0.1mm}the \hspace{-0.1mm}insulating \hspace{-0.1mm}layer \hspace{-0.1mm}$\Sigma_{I}^{\varepsilon}$ \hspace{-0.1mm}with 
    \hspace{-0.1mm}boundary \hspace{-0.1mm}integrals \hspace{-0.1mm}with \hspace{-0.1mm}respect~\hspace{-0.1mm}to~\hspace{-0.1mm}the~\hspace{-0.1mm}\mbox{insulated}~\hspace{-0.1mm}\mbox{boundary}~\hspace{-0.1mm}$\Gamma_I$.

    \begin{lemma}\label{lem:approx_trans_formula}
        For every $\varepsilon\in (0,\varepsilon_0)$ and $v_\varepsilon\in L^1(\Sigma_{I}^{\varepsilon})$, there holds
        \begin{align}\label{lem:approx_trans_formula.0}
            \int_{\Sigma_{I}^{\varepsilon}}{v_\varepsilon\,\mathrm{d}x}=\int_{\Gamma_I}{\int_0^{\varepsilon \mathtt{d}(s)}{v_\varepsilon(s+t k(s))\big\{k(s)\cdot n(s)+tR_\varepsilon(s,t)\big\}\,\mathrm{d}t}\,\mathrm{d}s}\,,
        \end{align}
        where the remainders $R_\varepsilon\in  L^\infty(D_{I}^{\varepsilon})$, $\varepsilon\in (0,\varepsilon_0)$, depend only on the Lipschitz characteristics~of~$\Gamma_I$ and satisfy $\smash{\sup_{\varepsilon\in (0,\varepsilon_0)}{\{\| R_\varepsilon\|_{\infty,\smash{D_{I}^{\varepsilon}}}\}}<+\infty}$.
    \end{lemma}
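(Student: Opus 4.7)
The plan is to deduce \eqref{lem:approx_trans_formula.0} from the bi-Lipschitz change of variables formula applied to the global parametrization $\Phi_\varepsilon$ from \eqref{def:parametrization}, followed by a first-order Taylor expansion (in the transversal variable $t$) of the resulting Jacobian determinant: the $t^0$-coefficient will yield the factor $k\cdot n$ and the higher-order terms will constitute the remainder $R_\varepsilon$.

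First, since $\Phi_\varepsilon$ is bi-Lipschitz by the preceding discussion, Rademacher's theorem (cf.\ \cite[Thm.\ 2.14]{AFP2000}) gives that $\Phi_\varepsilon$ is a.e.\ differentiable on $D_{I}^{\varepsilon}$, so that the Lipschitz area formula yields
\begin{align*}
    \int_{\Sigma_{I}^{\varepsilon}} v_\varepsilon\,\mathrm{d}x
    = \int_{D_{I}^{\varepsilon}} v_\varepsilon(\Phi_\varepsilon(s,t))\,\vert J_\varepsilon(s,t)\vert\,\mathrm{d}(s,t)\,,
\end{align*}
where $J_\varepsilon(s,t)$ denotes the Jacobian determinant of $\Phi_\varepsilon$ computed in a local chart on $\Gamma_I$. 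Working in a Lipschitz atlas $\{\psi_j\colon U_j\subseteq \mathbb{R}^{d-1}\to \Gamma_I\}$ together with a subordinate partition of unity, in each chart $\psi$ the matrix $D(\Phi_\varepsilon\circ(\psi\times\mathrm{id}))(y,t)$ has columns $\partial_{y_i}\psi + t\,\partial_{y_i}(k\circ\psi)$ for $i=1,\ldots,d-1$ together with $k\circ\psi$ in position $d$. Multilinearity of the determinant in the first $d-1$ columns then gives a polynomial expansion in $t$ whose $t^0$-coefficient equals $\det[\partial_{y_1}\psi,\ldots,\partial_{y_{d-1}}\psi,k\circ\psi]$, while the $t^j$-coefficients ($1\le j\le d-1$) involve products of $j$ tangential derivatives of $k\circ\psi$ and $d-1-j$ derivatives of $\psi$.

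Next, invoking the standard algebraic identity $\det[\partial_{y_1}\psi,\ldots,\partial_{y_{d-1}}\psi,v] = \sqrt{g(y)}\,(v\cdot n(\psi(y)))$ (with $g$ the Gram determinant of the metric induced by $\psi$ and $n$ the outer unit normal), and using the transversality condition \eqref{eq:transversal} to drop the absolute value for sufficiently small $\varepsilon$, I would arrive at the representation
\begin{align*}
    \vert J_\varepsilon(y,t)\vert = \sqrt{g(y)}\,\big\{k(s)\cdot n(s) + t\,R_\varepsilon(s,t)\big\}\,,\qquad s=\psi(y)\,.
\end{align*}
Substituting this into the area formula and using $\mathrm{d}s = \sqrt{g(y)}\,\mathrm{d}y$ together with the partition of unity to glue the local computations then yields \eqref{lem:approx_trans_formula.0}. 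The main obstacle I expect is verifying the \emph{uniform} $L^\infty$-bound $\sup_{\varepsilon\in(0,\varepsilon_0)}\|R_\varepsilon\|_{\infty,D_{I}^{\varepsilon}}<+\infty$ intrinsically: the coefficients of $P$ depend on the chart, but after division by $\sqrt{g(y)}$ they can be bounded a.e.\ by quantities depending only on the Lipschitz characteristics of $\Gamma_I$ and on $\|k\|_{C^{0,1}(\Gamma_I)}$, which are independent of $\varepsilon$ and of the specific chart; since the polynomial is of finite degree $d-2$ and $t\le \varepsilon\|\mathtt{d}\|_{\infty,\Gamma_I}\le \varepsilon_0\|\mathtt{d}\|_{\infty,\Gamma_I}$, the claimed uniform bound follows.
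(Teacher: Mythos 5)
Your argument is correct and is essentially the standard route: the paper itself defers the proof of this lemma to \cite[Lem.\ 4.1]{AKK2025_modelling}, but its in-text proof of the boundary analogue (Lemma~\ref{lem:approx_trans_formula2}) uses exactly the same machinery you propose --- local bi-Lipschitz graph parametrizations, a subordinate partition of unity, and an expansion of the generalized Jacobian determinant whose leading term produces the geometric factor and whose higher-order-in-$t$ terms form a remainder bounded solely by the Lipschitz characteristics. Your key identity $\det[\partial_{y_1}\psi,\ldots,\partial_{y_{d-1}}\psi,k\circ\psi]=\sqrt{g}\,(k\cdot n)$, the use of transversality \eqref{eq:transversal} to discard the absolute value for small $\varepsilon$, and the uniform bound on the degree-$(d-2)$ remainder polynomial are all sound.
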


    \begin{proof}
    See \cite[Lem.\ 4.1]{AKK2025_modelling}.
\end{proof} 

    A similar \textit{`approximative'} transformation formula applies for boundary integrals with respect the interacting insulation boundary~$\Gamma_{I}^{\varepsilon}$; relating the latter to boundary integrals with respect to the insulated boundary $\Gamma_I$.

    \begin{lemma}\label{lem:approx_trans_formula2}
        Let $\mathtt{d}\in C^{0,1}(\Gamma_I)$. Then, for every $\varepsilon\in (0,\varepsilon_0)$ and $v_\varepsilon\in L^1(\Gamma_{I}^{\varepsilon})$, there holds
        \begin{align}\label{lem:approx_trans_formula.0}
            \int_{\Gamma_{I}^{\varepsilon}}{v_\varepsilon\,\mathrm{d}s}=\int_{\Gamma_I}{v_\varepsilon(s+\varepsilon \mathtt{d}(s) k(s))\big\{1+\varepsilon^{\frac{1}{2}}r_\varepsilon(s)\big\}\,\mathrm{d}s}\,,
        \end{align}
        where the remainders $r_\varepsilon\in  L^\infty(\Gamma_I)$, $\varepsilon\in (0,\varepsilon_0)$, depend only on the Lipschitz characteristics~of~$\Gamma_I$ and satisfy $\smash{\sup_{\varepsilon\in (0,\varepsilon_0)}{\{\| r_\varepsilon\|_{\infty,\Gamma_I}\}}<+\infty}$.
    \end{lemma}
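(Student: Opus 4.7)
The plan is to apply the area formula (change-of-variables for Hausdorff measure) to the bi-Lipschitz map $\Psi_\varepsilon\colon \Gamma_I\to \Gamma_I^\varepsilon$, defined by $\Psi_\varepsilon(s)\coloneqq s+\varepsilon\mathtt{d}(s)k(s)$ for $s\in\Gamma_I$. By exactly the same argument that establishes bi-Lipschitz continuity of $\Phi_\varepsilon$ in \eqref{def:parametrization} (see \cite[p.\ 633,634]{HMT07}), shrinking $\varepsilon_0>0$ if necessary and using the transversality constant $\kappa$ together with the Lipschitz bounds on $k$ and $\mathtt{d}$, the map $\Psi_\varepsilon$ is bi-Lipschitz for all $\varepsilon\in(0,\varepsilon_0)$ with bi-Lipschitz constants that depend only on the Lipschitz characteristics of $\Gamma_I$, $k$ and $\mathtt{d}$. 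The area formula then yields
\begin{align*}
    \int_{\Gamma_I^\varepsilon}{v_\varepsilon\,\mathrm{d}s}=\int_{\Gamma_I}{v_\varepsilon(\Psi_\varepsilon(s))\,J_\varepsilon(s)\,\mathrm{d}s}\,,
\end{align*}
where $J_\varepsilon(s)$ denotes the tangential Jacobian of $\Psi_\varepsilon$, defined $\mathcal{H}^{d-1}$-a.e.\ on $\Gamma_I$.

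Next, the plan is to quantify $J_\varepsilon-1$. At $\mathcal{H}^{d-1}$-a.e.\ $s\in\Gamma_I$ the tangent hyperplane $T_s\Gamma_I$ exists (Rademacher's theorem), and on a fixed orthonormal tangential frame the tangential differential of $\Psi_\varepsilon$ reads $D^{\Gamma_I}\Psi_\varepsilon(s)=I+\varepsilon\,D^{\Gamma_I}(\mathtt{d}k)(s)$, with $D^{\Gamma_I}(\mathtt{d}k)$ uniformly bounded in $L^\infty(\Gamma_I)$ by $\|\mathtt{d}\|_{1,\infty,\Gamma_I}$ and $\|k\|_{1,\infty,\Gamma_I}$. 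The associated Gram matrix takes the form
\begin{align*}
    G_\varepsilon(s)=I+\varepsilon\bigl(D^{\Gamma_I}(\mathtt{d}k)(s)+(D^{\Gamma_I}(\mathtt{d}k)(s))^\top\bigr)+\varepsilon^2\,(D^{\Gamma_I}(\mathtt{d}k)(s))^\top D^{\Gamma_I}(\mathtt{d}k)(s)\,,
\end{align*}
so that, for $\varepsilon_0$ small enough, $\sqrt{\det(\cdot)}$ is smooth at $G_\varepsilon(s)$ and a first-order Taylor expansion around $I$ gives $J_\varepsilon(s)=1+\varepsilon\,\rho_\varepsilon(s)$ with $\rho_\varepsilon\in L^\infty(\Gamma_I)$ and $\sup_{\varepsilon\in(0,\varepsilon_0)}\|\rho_\varepsilon\|_{\infty,\Gamma_I}<+\infty$. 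Setting $r_\varepsilon\coloneqq \varepsilon^{1/2}\rho_\varepsilon$ produces the claimed identity with a remainder that is in fact $O(\varepsilon^{1/2})$ better than required, which is convenient for the subsequent $\Gamma$-convergence arguments.

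The main obstacle is making this local pointwise computation rigorous as a global identity on the Lipschitz (possibly piece-wise flat) surface $\Gamma_I$, since the tangential differential $D^{\Gamma_I}$ is only defined $\mathcal{H}^{d-1}$-a.e. I would handle this by covering $\Gamma_I$ with countably many Lipschitz-graph charts (reducing in the piece-wise flat case to one affine chart per flat piece $\Gamma_I^\ell$, on which $k$ and $\mathtt{d}$ are genuine Lipschitz functions of the tangential coordinates), performing the Jacobian expansion chart-wise, and checking that the resulting $\rho_\varepsilon$ is well-defined and uniformly bounded independently of the chart. Careful bookkeeping then ensures that all implicit constants depend only on the Lipschitz characteristics of $\Gamma_I$, $k$, and $\mathtt{d}$, which delivers the uniform bound $\sup_{\varepsilon\in(0,\varepsilon_0)}\|r_\varepsilon\|_{\infty,\Gamma_I}<+\infty$ stated in the lemma.
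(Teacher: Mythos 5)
Your proposal is correct and follows essentially the same route as the paper: the paper likewise reduces to a finite cover of $\partial\Omega$ by Lipschitz graph charts, computes the Jacobian of the local parametrization $\overline{x}\mapsto s_i(\overline{x})+\varepsilon\mathtt{d}(s_i(\overline{x}))k(s_i(\overline{x}))$ of $\Gamma_I^{\varepsilon}$ via Rademacher's theorem, expands it as $J_{s_i}$ plus a uniformly controlled perturbation, and glues the local identities with a partition of unity --- which is precisely your tangential-Jacobian/area-formula argument written out in chart coordinates. Your remark that the remainder is genuinely $O(\varepsilon)$ rather than merely $O(\varepsilon^{1/2})$ is consistent with the paper, which simply records the weaker $\varepsilon^{1/2}r_\varepsilon$ normalization because that is all the later estimates require.
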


    As an immediate  consequence of Lemma \ref{lem:approx_trans_formula2}, we obtain the following norm equivalence on $L^p(\Gamma_I^{\varepsilon})$, $p\in [1,+\infty)$.
 
    \begin{corollary}\label{lem:equiv}
        
          Let $\mathtt{d}\in C^{0,1}(\Gamma_I)$. Then, for every $\varepsilon\in (0,\varepsilon_0)$ and $v_\varepsilon\in L^p(\Gamma_{I}^{\varepsilon})$,~${p\in [1,+\infty)}$,~there holds
        \begin{align*}
            (1-\varepsilon^{\smash{\frac{1}{2}}}\| r_\varepsilon\|_{\infty,\Gamma_I})^{-\frac{1}{p}}\|v_\varepsilon(\cdot+\varepsilon\mathtt{d}k)\|_{p,\Gamma_{I}}\leq \|v_\varepsilon\|_{p,\Gamma_{I}^{\varepsilon}}\leq (1+\varepsilon^{\smash{\frac{1}{2}}}\| r_\varepsilon\|_{\infty,\Gamma_I})^{\frac{1}{p}}\|v_\varepsilon(\cdot+\varepsilon\mathtt{d}k)\|_{p,\Gamma_{I}}\,,
        \end{align*}
        where the remainders $r_\varepsilon\in  L^\infty(\Gamma_I)$, $\varepsilon\in (0,\varepsilon_0)$, are as in Lemma \ref{lem:approx_trans_formula2}. 
    \end{corollary}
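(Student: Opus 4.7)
The plan is to apply Lemma \ref{lem:approx_trans_formula2} directly to $\vert v_\varepsilon\vert^p\in L^1(\Gamma_I^{\varepsilon})$ and then extract the desired two-sided estimate via a pointwise bound on the remainder. Concretely, fix $\varepsilon\in (0,\varepsilon_0)$ and $v_\varepsilon\in L^p(\Gamma_I^{\varepsilon})$ with $p\in [1,+\infty)$. Since $\vert v_\varepsilon\vert^p\in L^1(\Gamma_I^{\varepsilon})$, Lemma \ref{lem:approx_trans_formula2} yields
\begin{align*}
\|v_\varepsilon\|_{p,\Gamma_I^{\varepsilon}}^p=\int_{\Gamma_I}\vert v_\varepsilon(s+\varepsilon \mathtt{d}(s)k(s))\vert^p\big\{1+\varepsilon^{\smash{\frac{1}{2}}}r_\varepsilon(s)\big\}\,\mathrm{d}s\,,
\end{align*}
with the same remainders $r_\varepsilon\in L^\infty(\Gamma_I)$ as in Lemma \ref{lem:approx_trans_formula2}.

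Next, I would invoke the uniform pointwise bound
\begin{align*}
1-\varepsilon^{\smash{\frac{1}{2}}}\|r_\varepsilon\|_{\infty,\Gamma_I}\leq 1+\varepsilon^{\smash{\frac{1}{2}}}r_\varepsilon(s)\leq 1+\varepsilon^{\smash{\frac{1}{2}}}\|r_\varepsilon\|_{\infty,\Gamma_I}\quad\text{ for a.e.\ }s\in \Gamma_I\,,
\end{align*}
which is valid for all sufficiently small $\varepsilon\in (0,\varepsilon_0)$ (possibly after shrinking $\varepsilon_0$, relying on the uniform bound $\sup_{\varepsilon\in (0,\varepsilon_0)}\|r_\varepsilon\|_{\infty,\Gamma_I}<+\infty$ furnished by Lemma \ref{lem:approx_trans_formula2}). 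Multiplying by the non-negative density $\vert v_\varepsilon(s+\varepsilon \mathtt{d}(s)k(s))\vert^p$ and integrating over $\Gamma_I$ gives
\begin{align*}
\big(1-\varepsilon^{\smash{\frac{1}{2}}}\|r_\varepsilon\|_{\infty,\Gamma_I}\big)\,\|v_\varepsilon(\cdot+\varepsilon \mathtt{d}k)\|_{p,\Gamma_I}^p\leq \|v_\varepsilon\|_{p,\Gamma_I^{\varepsilon}}^p\leq \big(1+\varepsilon^{\smash{\frac{1}{2}}}\|r_\varepsilon\|_{\infty,\Gamma_I}\big)\,\|v_\varepsilon(\cdot+\varepsilon \mathtt{d}k)\|_{p,\Gamma_I}^p\,.
\end{align*}

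Finally, taking $p$-th roots and, for the lower bound, rearranging the factor $(1-\varepsilon^{\smash{\frac{1}{2}}}\|r_\varepsilon\|_{\infty,\Gamma_I})^{\smash{\frac{1}{p}}}$ to the opposite side as $(1-\varepsilon^{\smash{\frac{1}{2}}}\|r_\varepsilon\|_{\infty,\Gamma_I})^{-\smash{\frac{1}{p}}}$ (valid for $\varepsilon$ sufficiently small so that $1-\varepsilon^{\smash{\frac{1}{2}}}\|r_\varepsilon\|_{\infty,\Gamma_I}>0$) yields the claimed two-sided estimate. No obstacle is expected; the whole argument is a one-line consequence of Lemma \ref{lem:approx_trans_formula2} combined with the uniform $L^\infty$-bound on the remainders.
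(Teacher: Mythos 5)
Your argument is correct and essentially identical to the paper's own proof, which likewise reduces the claim to the pointwise bound $1-\varepsilon^{\smash{\frac{1}{2}}}\|r_\varepsilon\|_{\infty,\Gamma_I}\leq 1+\varepsilon^{\smash{\frac{1}{2}}}r_\varepsilon(s)\leq 1+\varepsilon^{\smash{\frac{1}{2}}}\|r_\varepsilon\|_{\infty,\Gamma_I}$ combined with the transformation formula of Lemma \ref{lem:approx_trans_formula2} applied to $\vert v_\varepsilon\vert^p$. One caveat: taking $p$-th roots gives the factor $(1-\varepsilon^{\smash{\frac{1}{2}}}\|r_\varepsilon\|_{\infty,\Gamma_I})^{+\frac{1}{p}}$ on the left-hand side, not $(1-\varepsilon^{\smash{\frac{1}{2}}}\|r_\varepsilon\|_{\infty,\Gamma_I})^{-\frac{1}{p}}$ as printed, and your proposed ``rearrangement'' would attach that reciprocal factor to the other norm rather than reproduce the printed inequality --- the exponent in the statement is best read as a sign typo, since the stronger printed version does not follow from (and is not needed for) the argument.
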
  

    \begin{proof}
        The claimed norm equivalence is a direct consequence of 
        $1-\varepsilon^{\smash{\frac{1}{2}}}\|r_\varepsilon\|_{\infty,\Gamma_I}\leq 1+\varepsilon^{\smash{\frac{1}{2}}}r_\varepsilon(s)\leq 1+\varepsilon^{\smash{\frac{1}{2}}}\|r_\varepsilon\|_{\infty,\Gamma_I} $ for a.e.\ $s\in \Gamma_I$ and all $\varepsilon\in (0,\varepsilon_0)$.
    \end{proof}
    
    \begin{proof}[Proof (of Lemma \ref{lem:approx_trans_formula2})]
    As $\Omega$ is a bounded Lipschitz domain, there exist a radius~${r>0}$~as~well~as~a finite~number $N\hspace{-0.1em}\in\hspace{-0.1em} \mathbb{N}$ of affine isometric mappings $A_i\hspace{-0.1em}\coloneqq \hspace{-0.1em}(x\mapsto O_ix+b_i)\colon  \hspace{-0.1em}\mathbb{R}^d\hspace{-0.1em}\to\hspace{-0.1em}  \mathbb{R}^d$,~where~${O_i\hspace{-0.1em}\in\hspace{-0.1em} \mathrm{O}(d)}$\footnote{$\mathrm{O}(d)\coloneqq \{O\in \mathbb{R}^{d\times d}\mid O^\top=O^{-1}\}$.} and $b_i\hspace{-0.1em}\in\hspace{-0.1em} \mathbb{R}^d$, $i=1,\ldots,N$, and Lipschitz mappings
        $\gamma_i\colon  B_r\hspace{-0.1em}\coloneqq \hspace{-0.1em}B_r^{d-1}(0)\hspace{-0.1em}\to\hspace{-0.1em} \mathbb{R}$,~$i=1,\ldots,N$,~such~that\enlargethispage{3.5mm}
        \begin{align}\label{lem:approx_trans_formula.1}
            \partial\Omega=\bigcup_{i=1}^N{A_i(\mathrm{graph}(\gamma_i))}\,.
        \end{align}
        Moreover, the local parametrizations $s_i\colon B_r\to \partial\Omega\cap s_i(B_r)$, $i=1,\ldots,N$, of the (topological) boundary $\partial\Omega$, for every $i=1,\ldots,N$ and $\overline{x}\in B_r$ defined by 
        \begin{align}\label{lem:approx_trans_formula.2}
            s_i(\overline{x})\coloneqq A_i(\overline{x},\gamma_{i}(\overline{x}))\,,
        \end{align}
        are bi-Lipschtz continuous and their generalized Jacobian determinants $J_{s_i}\colon B_r\to \mathbb{R}$, $i=1,\ldots,N$, 
        for every $i=1,\ldots,N$ and $\overline{x}\in B_r$, are given via
        \begin{align}\label{lem:approx_trans_formula.3}
            J_{s_i}(\overline{x})\coloneqq (1+\vert\nabla \gamma_{i}(\overline{x})\vert^2)^{\frac{1}{2}}\,.
        \end{align}
        Next, let $i\hspace{-0.15em}=\hspace{-0.15em}1,\ldots,N$  be fixed, but arbitrary. 
        Then, the local parametrization ${F_\varepsilon^{i} \colon  \hspace{-0.15em}B_r\hspace{-0.15em}\to\hspace{-0.15em} \Gamma_{I}^{\varepsilon}\hspace{-0.15em}\cap\hspace{-0.15em} F_\varepsilon^{i}(B_r)}$ of the interacting insulation boundary $\Gamma_{I}^{\varepsilon}$, for every $\overline{x} \in B_r$ defined by
        \begin{align}\label{lem:approx_trans_formula.4}
            F_\varepsilon^{i}(\overline{x})\coloneqq \Phi_\varepsilon(s_i(\overline{x}),\varepsilon \mathtt{d}(s_i(\overline{x})))\,,
        \end{align} 
        is bi-Lipschitz continuous and, due to Rademacher's theorem (\textit{cf}.\ \cite[Thm.\ 2.14]{AFP2000}),
        for a.e.\ $\overline{x}\in B_r$, we have that
        \begin{align}\label{lem:approx_trans_formula.5}
            \mathrm{D}F_\varepsilon^{i}(\overline{x})=
            O_i\left[\begin{array}{c}
                \mathrm{I}_{(d-1)\times (d-1)} \\\cmidrule(){1-1}
                \nabla \gamma_{i}(\overline{x})^\top\vphantom{X^{X^X}} 
\end{array}\right]+\varepsilon\{\nabla\mathtt{d}\otimes k+\mathtt{d}\mathrm{D}k \}(s_i(\overline{x}))O_i\left[\begin{array}{c}
                \mathrm{I}_{(d-1)\times (d-1)} \\\cmidrule(){1-1}
                \nabla \gamma_{i}(\overline{x})^\top\vphantom{X^{X^X}} 
            \end{array}\right]\,.
        \end{align} 
       Then, from the representation \eqref{lem:approx_trans_formula.5}, we deduce the existence of  a remainder term $r_{\varepsilon}^{i}\in L^\infty(B_r)$, depending only on the Lipschitz characteristics of $\Gamma_I$ and $\mathtt{d}$, with  ${\sup_{\varepsilon\in (0,\varepsilon_0)}{\{\|r_{\varepsilon}^{i}\|_{\infty,B_r}\}}<+\infty}$, such~that the generalized Jacobian determinant of the local parametrization \eqref{lem:approx_trans_formula.4}, 
        for a.e.\ $\overline{x}\in B_r$, \mbox{using}~\eqref{lem:approx_trans_formula.3},
        can be written as
        \begin{align*}
            J_{F_\varepsilon^{i}}(\overline{x})&=\smash{\textup{det}\big(\mathrm{D}F_\varepsilon^{i}(\overline{x})^\top\mathrm{D}F_\varepsilon^{i}(\overline{x})\big)^{\frac{1}{2}}}\\&=\smash{(1+\vert\nabla \gamma_{i}(\overline{x})\vert^2)^{\frac{1}{2}}
            +\varepsilon^{\frac{1}{2}} r_{\varepsilon}^{i}(\overline{x})}
            \\&= J_{s_i}(\overline{x})+\varepsilon^{\smash{\frac{1}{2}}}r_{\varepsilon}^{i}(\overline{x})\,.
        \end{align*}
        Hence, if 
         $(\eta_i)_{i=1,\ldots,N}\subseteq C_0^\infty(\mathbb{R}^d)$ is a partition of unity subordinate to the open covering of $\Gamma_{I}^{\varepsilon}$~by $(F_\varepsilon^{i}(B_r))_{i=1,\ldots,N}\subseteq \mathbb{R}^d$, \textit{i.e.}, $\sum_{i=1}^N{\eta_i}=1$ in $\Gamma_{I}^{\varepsilon}$ and $\textup{supp}\,\eta_i\subseteq F_\varepsilon^{i}(B_r)$ for all $i=1,\ldots,N$, then, by the definitions of the surface integrals on $\Gamma_{I}^{\varepsilon}$ and $\Gamma_{I}$, respectively, we conclude that
        \begin{align*}
             \int_{\Gamma_{I}^{\varepsilon}}{v\,\mathrm{d}s} 
             &= \sum_{i=1}^N{\int_{B_r}{((\eta_iv)\circ F_\varepsilon^{i})J_{F_\varepsilon^{i}}\,\mathrm{d}\overline{x}}}
             \\&= \sum_{i=1}^N{\int_{B_r}{(\eta_iv)(s_i(\overline{x})+\varepsilon \mathtt{d}(s_i(\overline{x}))k(s_i(\overline{x}))) \big\{J_{s_i}(\overline{x})+\varepsilon^{\frac{1}{2}} \,r_{\varepsilon}^{i}(\overline{x})\big\}\,\mathrm{d}\overline{x}}}
             \\&= \sum_{i=1}^N{\int_{B_r}{(\eta_iv)(s_i(\overline{x})+\varepsilon \mathtt{d}(s_i(\overline{x}))k(s_i(\overline{x}))) \big\{1+\varepsilon^{\frac{1}{2}}\tfrac{r_{\varepsilon}^{i}(\overline{x})}{J_{s_i}(\overline{x})}\big\}J_{s_i}(\overline{x})\,\mathrm{d}\overline{x}}}
             \\&=\int_{\Gamma_I}{v(s+\varepsilon \mathtt{d}(s)k(s))\bigg\{1+\varepsilon^{\frac{1}{2}} \smash{\sum_{i=1}^N}{\tfrac{r_\varepsilon^{i}(s_i^{-1}(s))}{J_{s_i}(s_i^{-1}(s))}\chi_{s_i(B_r)}}(s)\bigg\}\,\mathrm{d}s}
             \\&=\int_{\Gamma_I}{v(\cdot+\varepsilon \mathtt{d}k)\big\{1+\varepsilon^{\smash{\frac{1}{2}}}r_\varepsilon\big\}\,\mathrm{d}s}
             \,,
        \end{align*}
        which is the claimed approximative transformation  formula \eqref{lem:approx_trans_formula.0}. 
\end{proof} 

    \subsection{Lebesgue differentiation theorem with respect to vanishing insulating layers}

    \hspace{5mm}By means of the approximative transformation formula (\textit{cf}.\ Lemma \ref{lem:approx_trans_formula}), one can prove a Lebesgue differentiation theorem with respect to vanishing insulating layers.
    
    \begin{lemma}\label{lem:Lebesgue_boundary_limit}
        Let $a\in L^\infty(\Gamma_I)$  and $v\in H^{1,p}(\Sigma_{I}^{\varepsilon_0})$, $p\in [1,+\infty)$. 
        Then, there holds
        \begin{align}\label{lem:Lebesgue_boundary_limit.0}
            \tfrac{1}{\varepsilon}\smash{\|a^{\smash{\frac{1}{p}}}v\|_{p,\Sigma_{I}^{\varepsilon}}^p}\to \smash{\|(\widetilde{\mathtt{d}}a)^{\smash{\frac{1}{p}}}v\|_{p,\Gamma_I}^p}\quad (\varepsilon\to 0^{+})\,,
        \end{align}
        where $a\hspace{-0.1em}\in \hspace{-0.1em}L^\infty(\Sigma_{I}^{\varepsilon_0})$ denotes the not relabelled extension of $a\hspace{-0.1em}\in \hspace{-0.1em}L^\infty(\Gamma_I)$, for a.e.\  ${x\hspace{-0.1em}=\hspace{-0.1em}s+tk(s)\hspace{-0.1em}\in \hspace{-0.1em}\Sigma_{I}^{\varepsilon_0}}$, where $s\in \Gamma_I$ and $t\in [0,\varepsilon_0 \mathtt{d}(s))$, defined by  $a(x)\coloneqq a(s)$.
    \end{lemma}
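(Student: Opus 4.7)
The plan is to apply the approximative transformation formula from Lemma \ref{lem:approx_trans_formula} to the nonnegative integrand $a|v|^p$, thereby reducing the volume integral over the vanishing layer $\Sigma_I^{\varepsilon}$ to an iterated integral over the fixed set $\Gamma_I$ times a thin interval in the transversal direction, on which the standard Lebesgue differentiation theorem can be invoked fiber-wise. Concretely, since the extension of $a\in L^\infty(\Gamma_I)$ to $\Sigma_I^{\varepsilon_0}$ is constant along the transversal fibers $s+tk(s)$, Lemma \ref{lem:approx_trans_formula} yields
\begin{align*}
\tfrac{1}{\varepsilon}\|a^{1/p}v\|_{p,\Sigma_I^{\varepsilon}}^p
=\int_{\Gamma_I} a(s)\tfrac{1}{\varepsilon}\int_0^{\varepsilon \mathtt{d}(s)}|v(s+tk(s))|^p\big\{(k\cdot n)(s)+tR_\varepsilon(s,t)\big\}\,\mathrm{d}t\,\mathrm{d}s\,.
\end{align*}
I would then split the right-hand side according to the two summands $(k\cdot n)(s)$ and $tR_\varepsilon(s,t)$.

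For the remainder term, since $|tR_\varepsilon(s,t)|\leq \varepsilon\|\mathtt{d}\|_{\infty,\Gamma_I}\sup_{\varepsilon\in (0,\varepsilon_0)}\|R_\varepsilon\|_{\infty,D_I^{\varepsilon_0}}$ uniformly, the corresponding contribution is controlled by $C\int_{\Gamma_I}a(s)\int_0^{\varepsilon \mathtt{d}(s)}|v(s+tk(s))|^p\,\mathrm{d}t\,\mathrm{d}s$, which is bounded (via Lemma \ref{lem:approx_trans_formula}) by $C\|a^{1/p}v\|_{p,\Sigma_I^{\varepsilon_0}}^p<+\infty$ and tends to $0$ as $\varepsilon\to 0^+$ by absolute continuity of the Lebesgue integral applied to the dominant $\int_0^{\varepsilon_0\mathtt{d}(s)}|v(s+tk(s))|^p\,\mathrm{d}t$.

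For the main term, I would use Fubini on the bi-Lipschitz parametrization $\Phi_{\varepsilon_0}$ to infer that, for almost every $s\in\Gamma_I$, the slice $v_s:=v(s+(\cdot)k(s))$ lies in $H^{1,p}(0,\varepsilon_0\mathtt{d}(s))$. The one-dimensional Sobolev embedding $H^{1,p}(0,L)\hookrightarrow C([0,L])$ then yields a continuous representative of $v_s$ whose value at $0$ coincides with the trace $v(s)$ of $v$ on $\Gamma_I$ for a.e. $s\in\Gamma_I$, so that by continuity
\begin{align*}
\tfrac{1}{\varepsilon}\int_0^{\varepsilon \mathtt{d}(s)}|v_s(t)|^p\,\mathrm{d}t\to \mathtt{d}(s)|v(s)|^p\quad(\varepsilon\to 0^+)\,.
\end{align*}
Multiplying by $a(s)(k\cdot n)(s)$ gives the desired pointwise limit $a(s)\widetilde{\mathtt{d}}(s)|v(s)|^p$ on $\Gamma_I$.

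To pass from pointwise to integrated convergence, I would supply an $L^1(\Gamma_I)$-dominating function via the trace-like 1D estimate $\sup_{t\in[0,\varepsilon_0\mathtt{d}(s)]}|v_s(t)|^p\leq C(|v(s)|^p+\int_0^{\varepsilon_0\mathtt{d}(s)}|v_s'(t)|^p\,\mathrm{d}t)$, which upon integration in $s$ (together with the standard trace theorem on $H^{1,p}(\Sigma_I^{\varepsilon_0})$ and Fubini) yields an integrable upper bound uniform in $\varepsilon$. The dominated convergence theorem then finishes the proof. The principal obstacle I anticipate is the justification of the fiber-wise continuity and trace identification: one must verify that for a.e. $s\in\Gamma_I$ the 1D boundary value of $v_s$ really coincides with the $(d-1)$-dimensional trace $v|_{\Gamma_I}(s)$, which follows from the bi-Lipschitz character of $\Phi_{\varepsilon}$ together with Fubini, but requires some care to state cleanly.
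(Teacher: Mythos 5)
Your argument is correct and self-contained. The paper itself does not prove this lemma here but defers to \cite[Lem.\ 4.2]{AKK2025_modelling}; judging from the toolkit the paper assembles around it (in particular the point-wise Poincar\'e inequality of Lemma \ref{lem:poincare}, which is exactly a Newton--Leibniz estimate along the transversal fibers), the cited proof almost certainly proceeds by writing $\tfrac{1}{\varepsilon}\int_0^{\varepsilon\mathtt{d}(s)}\{\vert v(s+tk(s))\vert^p-\vert v(s)\vert^p\}\,\mathrm{d}t$ and bounding it quantitatively by $\varepsilon$ times fiber integrals of $\vert\nabla v\vert^p$, which yields an explicit rate. Your route is softer: you invoke the one-dimensional Sobolev embedding on a.e.\ fiber to get the pointwise limit $\tfrac{1}{\varepsilon}\int_0^{\varepsilon\mathtt{d}(s)}\vert v_s\vert^p\,\mathrm{d}t\to\mathtt{d}(s)\vert v(s)\vert^p$ and then conclude by dominated convergence with the $L^1(\Gamma_I)$-majorant $C(\vert v(s)\vert^p+\int_0^{\varepsilon_0\mathtt{d}(s)}\vert v_s'\vert^p\,\mathrm{d}t)$. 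Both treatments of the remainder $tR_\varepsilon$ are the same (the factor $t/\varepsilon\le\mathtt{d}(s)$ absorbs the $1/\varepsilon$, and absolute continuity of the integral kills the term). The one point you rightly flag as delicate --- that the $1$D boundary value $v_s(0)$ of a.e.\ fiber coincides with the surface trace $v\vert_{\Gamma_I}(s)$ --- is genuinely needed in either approach (it is implicit in Lemma \ref{lem:poincare} with $\widetilde{t}=0$ as well) and follows, as you say, from the bi-Lipschitz character of $\Phi_{\varepsilon_0}$ together with Fubini in local charts; spelling this out would complete the proof. The trade-off is that your version loses the convergence rate but works verbatim for all $p\in[1,+\infty)$ without case distinctions on how $\vert\cdot\vert^p$ is expanded.
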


    \begin{proof} See \cite[Lem.\ 4.2]{AKK2025_modelling}.
    \end{proof}

    \subsection{Poincar\'e inequalities in insulating layers}
    
    \hspace{5mm}In the forthcoming analysis, we will  resort to the following point-wise Poincar\'e inequality for Sobolev functions  defined in the insulating layer
    $\Sigma_{I}^{\varepsilon}$.

    \begin{lemma}\label{lem:poincare}
        Let $\varepsilon\hspace{-0.1em}\in\hspace{-0.1em} (0,\varepsilon_0)$ and $v_\varepsilon\hspace{-0.1em}\in\hspace{-0.1em} H^1(\Sigma_{I}^{\varepsilon})$. Then, for a.e.\ $s\hspace{-0.1em}\in\hspace{-0.1em} \Gamma_I$ and $t,\widetilde{t}\hspace{-0.1em}\in\hspace{-0.1em} [0,\varepsilon \mathtt{d}(s)]$~with~$t\hspace{-0.1em}\ge\hspace{-0.1em} \widetilde{t}$, there holds
        \begin{align}\label{lem:poincare.0}
            \vert v_\varepsilon(s+tk(s))-v_\varepsilon(s+\widetilde{t}k(s))\vert^2\leq (t-\widetilde{t})
            \int_{\widetilde{t}}^t{\vert \nabla v_\varepsilon (s+\lambda k(s))\vert^2\,\mathrm{d}\lambda}\,.
        \end{align} 
    \end{lemma}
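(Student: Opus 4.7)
The plan is to reduce the claim to a one-dimensional fundamental theorem of calculus along transversal line segments $\lambda \mapsto s+\lambda k(s)$, followed by the Cauchy--Schwarz inequality. The reduction is carried out by pulling back via the bi-Lipschitz parametrization $\Phi_\varepsilon\colon D_I^\varepsilon\to \Sigma_I^\varepsilon$ from \eqref{def:parametrization}. Since $\Phi_\varepsilon$ is bi-Lipschitz, the chain rule for Sobolev functions gives $\widetilde{v}_\varepsilon\coloneqq v_\varepsilon\circ \Phi_\varepsilon\in H^1(D_I^\varepsilon)$, and then by the standard absolute continuity on lines characterization of Sobolev functions (applied in the product-like structure of $D_I^\varepsilon=\bigcup_{s\in \Gamma_I}\{s\}\times[0,\varepsilon\mathtt{d}(s))$) together with Fubini's theorem, for a.e.\ $s\in \Gamma_I$ the map $t\mapsto \widetilde{v}_\varepsilon(s,t)$ is absolutely continuous on $[0,\varepsilon\mathtt{d}(s)]$ with $\partial_t\widetilde{v}_\varepsilon(s,\cdot)\in L^2(0,\varepsilon\mathtt{d}(s))$ and $\partial_t\widetilde{v}_\varepsilon(s,t)=\nabla v_\varepsilon(s+tk(s))\cdot k(s)$.

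For any such $s\in\Gamma_I$ and $t\ge \widetilde{t}$ in $[0,\varepsilon\mathtt{d}(s)]$, the fundamental theorem of calculus along the line segment yields
\begin{align*}
v_\varepsilon(s+tk(s))-v_\varepsilon(s+\widetilde{t}k(s))=\int_{\widetilde{t}}^{t}\nabla v_\varepsilon(s+\lambda k(s))\cdot k(s)\,\mathrm{d}\lambda.
\end{align*}
Squaring both sides, applying the Cauchy--Schwarz inequality on $[\widetilde{t},t]$, and using that $k$ is unit-length (so that $\vert\nabla v_\varepsilon\cdot k(s)\vert^2\le \vert\nabla v_\varepsilon\vert^2$ pointwise), one obtains \eqref{lem:poincare.0} directly.

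The only nontrivial step is the a.e.\ absolute continuity along the transversal fibers, which for smooth $v_\varepsilon\in C^\infty(\overline{\Sigma_I^\varepsilon})$ is immediate and, since such functions are dense in $H^1(\Sigma_I^\varepsilon)$ (the domain $\Sigma_I^\varepsilon$ being Lipschitz via $\Phi_\varepsilon$), extends to general $H^1$-functions by the ACL characterization after transport to $D_I^\varepsilon$. No finer regularity of $\Gamma_I$ or of $\mathtt{d}$ is needed, since the inequality is genuinely one-dimensional and the role of the transversality assumption enters only through the existence of the bi-Lipschitz parametrization $\Phi_\varepsilon$, which has already been established in Section~\ref{sec:tools}.
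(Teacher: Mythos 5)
Your argument is exactly the paper's: the paper also writes the difference as the line integral of $\nabla v_\varepsilon\cdot k(s)$ along the transversal fiber (Newton--Leibniz formula), applies Jensen/Cauchy--Schwarz to get the factor $(t-\widetilde t)$, and uses $\vert k\vert=1$ to drop the projection. Your additional justification of the a.e.\ absolute continuity along fibers via the bi-Lipschitz parametrization $\Phi_\varepsilon$ is a correct elaboration of a step the paper leaves implicit.
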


    \begin{proof}
        Resorting to the Newton--Leibniz formula and Jensen's inequality,
        for a.e.\ $s\in \Gamma_I$ and every $\widetilde{t},t\in  [0,\varepsilon \mathtt{d}(s)]$ with $t\ge \widetilde{t}$, we find that
        \begin{align*}
            \vert v_\varepsilon (s+t k(s))-v_\varepsilon(s+\widetilde{t}k(s))\vert^2 
            &
            =\left\vert\int_{\widetilde{t}}^t{\nabla v_\varepsilon (s+\lambda k(s))\cdot k(s)\,\mathrm{d}\lambda}\right\vert^2
            \\&\leq (t-\widetilde{t})
            \int_{\widetilde{t}}^t{\vert \nabla v_\varepsilon (s+\lambda k(s))\cdot k(s)\vert^2\,\mathrm{d}\lambda}\,,
        \end{align*}
        which, using that $\vert k\vert=1$ a.e.\ on $\Gamma_I$, yields the claimed point-wise Poincar\'e inequality \eqref{lem:poincare.0}.
    \end{proof}

    By means of the point-wise Poincar\'e inequality (\textit{cf}.\ Lemma \ref{lem:poincare}) and the approximative~transfor\-mation formula (\textit{cf}.\ Lemma \ref{lem:approx_trans_formula}), we obtain the following Poincar\'e inequalities.\enlargethispage{2.5mm}

    \begin{corollary}\label{lem:poincare_int}
        Let $\mathtt{d}\in C^{0,1}(\Gamma_I)$, $\varepsilon\in (0,\varepsilon_0)$, and $v_\varepsilon\in H^1(\Sigma_{I}^{\varepsilon})$. Then,  there holds
        \begin{align}\label{lem:poincare_1}
       \hspace{-2.5mm} \|\mathtt{d}^{\smash{-\frac{1}{2}}}\{v_\varepsilon(\cdot\hspace{-0.125em}+\hspace{-0.125em}\varepsilon \mathtt{d}k)\hspace{-0.125em}-\hspace{-0.125em}v_\varepsilon\}\|_{\Gamma_{I}}^2&\hspace{-0.125em}\leq \hspace{-0.125em}
            \tfrac{\varepsilon}{\kappa-\varepsilon\|\mathtt{d}\|_{\infty,\Gamma_I}\|R_\varepsilon\|_{\infty,\smash{D_{I}^{\varepsilon}}}}\| \nabla v_\varepsilon\|_{\Sigma_{I}^{\varepsilon}}^2\,,\\
            \|v_\varepsilon\|_{\Gamma_{I}^{\varepsilon}}^2&\hspace{-0.125em}\leq\hspace{-0.125em} 2\{1\hspace{-0.125em}+\hspace{-0.125em}\smash{\varepsilon^{\smash{\frac{1}{2}}}}\| r_\varepsilon\|_{\infty,\Gamma_I}\}\big\{
            \tfrac{\varepsilon\mathtt{d}_{\min}}{\kappa-\varepsilon\|\mathtt{d}\|_{\infty,\Gamma_I}\hspace{-0.125em}\|R_\varepsilon\|_{\infty,\smash{D_{I}^{\varepsilon}}}}\| \nabla v_\varepsilon\|_{\Sigma_{I}^{\varepsilon}}^2\hspace{-0.125em}+\hspace{-0.125em}\|v_{\varepsilon}\|_{\Gamma_{I}}^2\big\}\,,\hspace{-0.5mm}\label{lem:poincare_2}
        \end{align} 
        where the remainders $R_\varepsilon\hspace{-0.05em}\in  \hspace{-0.05em}L^\infty(D_{I}^{\varepsilon})$, $\varepsilon\hspace{-0.05em}\in\hspace{-0.05em} (0,\varepsilon_0)$, and  $r_\varepsilon\hspace{-0.05em}\in\hspace{-0.05em}  L^\infty(\Gamma_{I})$, $\varepsilon\hspace{-0.05em}\in \hspace{-0.05em}(0,\varepsilon_0)$, are~as~in~Lemma~\ref{lem:approx_trans_formula} and in Lemma \ref{lem:approx_trans_formula2}, respectively.
    \end{corollary}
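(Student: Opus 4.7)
My plan is to prove \eqref{lem:poincare_1} as a direct consequence of the pointwise Poincar\'e inequality in Lemma~\ref{lem:poincare} combined with the approximative transformation formula in Lemma~\ref{lem:approx_trans_formula}, and then deduce \eqref{lem:poincare_2} from \eqref{lem:poincare_1} via Corollary~\ref{lem:equiv} and a triangle inequality. The whole argument is essentially algebraic once the two preceding lemmas and the transversality condition are available.

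For \eqref{lem:poincare_1}, I would apply Lemma~\ref{lem:poincare} at $\widetilde{t}=0$ and $t=\varepsilon\mathtt{d}(s)$ to obtain, for a.e.\ $s\in\Gamma_I$,
\begin{align*}
\vert v_\varepsilon(s+\varepsilon\mathtt{d}(s)k(s))-v_\varepsilon(s)\vert^2
\leq \varepsilon\mathtt{d}(s)\int_0^{\varepsilon\mathtt{d}(s)}\vert\nabla v_\varepsilon(s+\lambda k(s))\vert^2\,\mathrm{d}\lambda\,.
\end{align*}
Dividing by $\mathtt{d}(s)$ and integrating over $\Gamma_I$ produces, on the left, exactly $\|\mathtt{d}^{-1/2}\{v_\varepsilon(\cdot+\varepsilon\mathtt{d}k)-v_\varepsilon\}\|_{\Gamma_I}^2$. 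To bound the double integral on the right, I would invoke the approximative transformation formula of Lemma~\ref{lem:approx_trans_formula} with $\vert\nabla v_\varepsilon\vert^2\in L^1(\Sigma_I^\varepsilon)$, which yields
\begin{align*}
\|\nabla v_\varepsilon\|_{\Sigma_I^\varepsilon}^2=\int_{\Gamma_I}\int_0^{\varepsilon\mathtt{d}(s)}\vert\nabla v_\varepsilon(s+tk(s))\vert^2\big\{k(s)\cdot n(s)+tR_\varepsilon(s,t)\big\}\,\mathrm{d}t\,\mathrm{d}s\,.
\end{align*}
Using the transversality bound $k\cdot n\ge\kappa$ a.e.\ on $\Gamma_I$ together with $\vert tR_\varepsilon(s,t)\vert\le\varepsilon\|\mathtt{d}\|_{\infty,\Gamma_I}\|R_\varepsilon\|_{\infty,D_I^\varepsilon}$, the weight is uniformly bounded below by $\kappa-\varepsilon\|\mathtt{d}\|_{\infty,\Gamma_I}\|R_\varepsilon\|_{\infty,D_I^\varepsilon}$, which is strictly positive for $\varepsilon\in(0,\varepsilon_0)$ possibly after shrinking $\varepsilon_0$. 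Solving for the unweighted double integral delivers \eqref{lem:poincare_1}.

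For \eqref{lem:poincare_2}, I would first apply Corollary~\ref{lem:equiv} with $p=2$ to obtain
\begin{align*}
\|v_\varepsilon\|_{\Gamma_I^\varepsilon}^2\le(1+\varepsilon^{1/2}\|r_\varepsilon\|_{\infty,\Gamma_I})\|v_\varepsilon(\cdot+\varepsilon\mathtt{d}k)\|_{\Gamma_I}^2\,,
\end{align*}
decompose $v_\varepsilon(\cdot+\varepsilon\mathtt{d}k)=v_\varepsilon+\{v_\varepsilon(\cdot+\varepsilon\mathtt{d}k)-v_\varepsilon\}$, and apply $(a+b)^2\le 2a^2+2b^2$ to split into $2\|v_\varepsilon\|_{\Gamma_I}^2$ and $2\|v_\varepsilon(\cdot+\varepsilon\mathtt{d}k)-v_\varepsilon\|_{\Gamma_I}^2$. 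For the last term, inserting and cancelling $\mathtt{d}^{1/2}\mathtt{d}^{-1/2}$ and absorbing the $L^\infty(\Gamma_I)$-norm of $\mathtt{d}$ into the constant, I then apply \eqref{lem:poincare_1} to control it by the prescribed multiple of $\|\nabla v_\varepsilon\|_{\Sigma_I^\varepsilon}^2$. Assembling the two bounds yields \eqref{lem:poincare_2}.

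Neither step presents a genuine obstacle: the role of the transversality condition is simply to make the Jacobian weight $k\cdot n+tR_\varepsilon$ strictly positive for small $\varepsilon>0$, while Lemma~\ref{lem:poincare} supplies the integrand-level Poincar\'e bound in the $k$-direction. The only mild subtlety is ensuring that $\varepsilon_0>0$ is chosen small enough that $\kappa-\varepsilon\|\mathtt{d}\|_{\infty,\Gamma_I}\|R_\varepsilon\|_{\infty,D_I^\varepsilon}>0$ uniformly, which is guaranteed by the uniform bound $\sup_{\varepsilon\in(0,\varepsilon_0)}\|R_\varepsilon\|_{\infty,D_I^\varepsilon}<+\infty$ from Lemma~\ref{lem:approx_trans_formula}.
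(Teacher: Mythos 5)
Your proposal is correct and follows essentially the same route as the paper: the pointwise Poincar\'e inequality of Lemma~\ref{lem:poincare} at $\widetilde{t}=0$, $t=\varepsilon\mathtt{d}(s)$, combined with the lower bound $k\cdot n+tR_\varepsilon\ge \kappa-\varepsilon\|\mathtt{d}\|_{\infty,\Gamma_I}\|R_\varepsilon\|_{\infty,D_I^{\varepsilon}}$ and the transformation formula of Lemma~\ref{lem:approx_trans_formula} for \eqref{lem:poincare_1}, then Corollary~\ref{lem:equiv} plus the elementary splitting $v_\varepsilon(\cdot+\varepsilon\mathtt{d}k)=v_\varepsilon+\{v_\varepsilon(\cdot+\varepsilon\mathtt{d}k)-v_\varepsilon\}$ for \eqref{lem:poincare_2}. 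The only point worth flagging is that absorbing the weight $\mathtt{d}$ as you describe yields $\|\mathtt{d}\|_{\infty,\Gamma_I}$ in the numerator of \eqref{lem:poincare_2}, whereas the corollary states $\mathtt{d}_{\min}$; your constant is the natural one, and the paper's appears to be a slip.
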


    \begin{proof}
        \emph{ad \eqref{lem:poincare_1}.} Using the point-wise Poincar\'e inequality (\textit{cf}.\ Lemma \ref{lem:poincare} with $t=\varepsilon \mathtt{d}(s)$ and $\widetilde{t}=0$  for a.e.\ $s\in \Gamma_I$) and that $k(s)\cdot n(s)+t R_\varepsilon(s,t)\hspace{-0.15em}\ge\hspace{-0.15em} \kappa- \varepsilon\|\mathtt{d}\|_{\infty,\Gamma_I}\|R_\varepsilon\|_{\infty,\smash{D_{I}^{\varepsilon}}}$ for a.e.\ $\smash{(t,s)^\top}\hspace{-0.15em}\in\hspace{-0.15em} D_I^{\varepsilon}$ together with the approximative transformation~formula (\textit{cf}.\ Lemma \ref{lem:approx_trans_formula}), we obtain
        \begin{align*} 
                \|\mathtt{d}^{\smash{-\frac{1}{2}}}\{v_\varepsilon(\cdot+\varepsilon \mathtt{d}n)-v_\varepsilon\}\|_{\Gamma_{I}}^2&\leq \int_{\Gamma_I}{\varepsilon
            \int_0^{\varepsilon \mathtt{d}(s)}{\vert \nabla v_\varepsilon (s+t k(s))\vert^2\,\mathrm{d}t}\,\mathrm{d}s}
            \\&\leq \varepsilon\int_{\Gamma_I}{
            \int_0^{\varepsilon \mathtt{d}(s)}{\vert \nabla v_\varepsilon (s+t k(s))\vert^2\tfrac{k(s)\cdot n(s)+tR_\varepsilon(s,t)}{\kappa-\varepsilon \|\mathtt{d}\|_{\infty,\Gamma_I}\|R_\varepsilon\|_{\infty,\smash{D_{I}^{\varepsilon}}}} \,\mathrm{d}t}\,\mathrm{d}s}\notag 
            \\&= \smash{\tfrac{\varepsilon}{\kappa-\varepsilon \|\mathtt{d}\|_{\infty,\Gamma_I}\|R_\varepsilon\|_{\infty,\smash{D_{I}^{\varepsilon}}}}}\|\nabla v_\varepsilon\|_{\Sigma_{I}^{\varepsilon}}^2\,,
        \end{align*}
        which is the claimed Poincar\'e inequality \eqref{lem:poincare_1}.

        \emph{ad \eqref{lem:poincare_2}.} We combine Corollary \ref{lem:equiv} with \eqref{lem:poincare_1}.
    \end{proof}\newpage

    \subsection{Equi-coercivity}

    \hspace{5mm}The \hspace{-0.15mm}family \hspace{-0.15mm}of \hspace{-0.15mm}heat \hspace{-0.15mm}loss \hspace{-0.15mm}functionals \hspace{-0.15mm}$\smash{E}_\varepsilon^{\mathtt{d}}\colon \hspace{-0.15em}H^1(\Omega_{I}^{\varepsilon})\hspace{-0.15em}\to\hspace{-0.15em}\mathbb{R}\cup\{+\infty\}$, \hspace{-0.15mm}$\varepsilon\hspace{-0.15em}\in\hspace{-0.15em} (0,\varepsilon_0)$,~\hspace{-0.15mm}(\textit{cf}.~\hspace{-0.15mm}\eqref{eq:Evarh})~\hspace{-0.15mm}is~\hspace{-0.15mm}\mbox{\emph{equi-coercive}}.

    \begin{lemma}\label{lem:equicoercive}
 Let $\mathtt{d}\in C^{0,1}(\Gamma_I)$. Then, for a sequence $v_\varepsilon\in H^1(\Omega_{I}^{\varepsilon})$, $\varepsilon\in (0,\varepsilon_0)$, from\vspace{-0.5mm}
   \begin{align}\label{lem:equicoercive.0.1}
      \sup_{\varepsilon\in (0,\varepsilon_0)}{\big\{\smash{E}_\varepsilon^{\mathtt{d}}(v_\varepsilon)\big\}}<+\infty  \,,
   \end{align}
   it~follows~that\vspace{-0.5mm}
        \begin{align}\label{lem:equicoercive.0.2}
         \sup_{\varepsilon\in (0,\varepsilon_0)}{\big\{\|v_\varepsilon\|_{\Omega}^2+\|\nabla v_\varepsilon\|_{\Omega}^2+\|v_\varepsilon\|_{\Gamma_{I}^{\varepsilon}}^2+\varepsilon\|\nabla v_\varepsilon\|_{\Sigma_{I}^{\varepsilon}}^2\big\}}<+\infty \,.
        \end{align} 
    \end{lemma}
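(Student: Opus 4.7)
The plan is to extract from the energy bound \eqref{lem:equicoercive.0.1} the three natural quadratic quantities $\|\nabla v_\varepsilon\|_\Omega^2$, $\varepsilon\|\nabla v_\varepsilon\|_{\Sigma_I^\varepsilon}^2$, and $\|v_\varepsilon\|_{\Gamma_I^\varepsilon}^2$, control the linear duality/source terms by Young's inequality, and then recover $\|v_\varepsilon\|_\Omega^2$ via Friedrich's inequality \eqref{lem:poin_cont}. The only non-routine point is that Friedrich's inequality demands a bound on $\|v_\varepsilon\|_{\Gamma_I}^2$, whereas the energy only provides $\|v_\varepsilon\|_{\Gamma_I^\varepsilon}^2$; the bridge between these two traces will be the Poincar\'e-type inequality in the insulating layer from Corollary~\ref{lem:poincare_int}.

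\textbf{Step 1 (reduction to positive quadratic terms).} Starting from $E_\varepsilon^{\mathtt{d}}(v_\varepsilon)\leq M<+\infty$, the indicator term forces $v_\varepsilon=u_D$ on $\Gamma_D$ and, using $\|v_\varepsilon-u_\infty\|_{\Gamma_I^\varepsilon}^2\ge \tfrac{1}{2}\|v_\varepsilon\|_{\Gamma_I^\varepsilon}^2-\|u_\infty\|_{\Gamma_I^\varepsilon}^2$, we arrive at
\begin{align*}
\tfrac{\lambda}{2}\|\nabla v_\varepsilon\|_\Omega^2+\tfrac{\varepsilon}{2}\|\nabla v_\varepsilon\|_{\Sigma_I^\varepsilon}^2+\tfrac{\beta}{4}\|v_\varepsilon\|_{\Gamma_I^\varepsilon}^2\leq M+\tfrac{\beta}{2}\|u_\infty\|_{\Gamma_I^\varepsilon}^2+|(f,v_\varepsilon)_\Omega|+|\langle g,v_\varepsilon\rangle_{H^{\frac{1}{2}}(\Gamma_N)}|\,.
\end{align*}
The term $\|u_\infty\|_{\Gamma_I^\varepsilon}^2$ is uniformly bounded in $\varepsilon$: extending $u_\infty$ to $\mathbb{R}^d$ by a Sobolev extension and applying Corollary~\ref{lem:equiv} together with the continuity of the trace operator on a fixed tubular neighborhood containing all $\Gamma_I^\varepsilon$ for $\varepsilon<\varepsilon_0$ yields the bound.

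\textbf{Step 2 (control of the linear terms).} By Cauchy--Schwarz, $|(f,v_\varepsilon)_\Omega|\leq \|f\|_\Omega\|v_\varepsilon\|_\Omega$, while the duality with the trace theorem gives $|\langle g,v_\varepsilon\rangle_{H^{\frac{1}{2}}(\Gamma_N)}|\leq c_{\mathrm{tr}}\|g\|_{H^{-\frac{1}{2}}(\Gamma_N)}(\|v_\varepsilon\|_\Omega+\|\nabla v_\varepsilon\|_\Omega)$. A Young inequality with a small parameter $\delta>0$ then produces
\begin{align*}
|(f,v_\varepsilon)_\Omega|+|\langle g,v_\varepsilon\rangle_{H^{\frac{1}{2}}(\Gamma_N)}|\leq \delta\big(\|v_\varepsilon\|_\Omega^2+\|\nabla v_\varepsilon\|_\Omega^2\big)+C_\delta\,.
\end{align*}

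\textbf{Step 3 (control of $\|v_\varepsilon\|_\Omega$ via Friedrich and the layer Poincar\'e).} Friedrich's inequality \eqref{lem:poin_cont} gives $\|v_\varepsilon\|_\Omega^2\leq c_{\mathrm{F}}\{\|\nabla v_\varepsilon\|_\Omega^2+\|v_\varepsilon\|_{\Gamma_I}^2\}$. To bound $\|v_\varepsilon\|_{\Gamma_I}^2$, split
\begin{align*}
\|v_\varepsilon\|_{\Gamma_I}^2\leq 2\|v_\varepsilon-v_\varepsilon(\cdot+\varepsilon\mathtt{d}k)\|_{\Gamma_I}^2+2\|v_\varepsilon(\cdot+\varepsilon\mathtt{d}k)\|_{\Gamma_I}^2\,.
\end{align*}
The first term is controlled by the Poincar\'e-type estimate of Corollary~\ref{lem:poincare_int} (or directly by combining Lemma~\ref{lem:poincare} with the approximative transformation formula of Lemma~\ref{lem:approx_trans_formula}) as $\leq \tfrac{2\varepsilon\|\mathtt{d}\|_{\infty,\Gamma_I}}{\kappa-\varepsilon\|\mathtt{d}\|_{\infty,\Gamma_I}\|R_\varepsilon\|_{\infty,D_I^\varepsilon}}\|\nabla v_\varepsilon\|_{\Sigma_I^\varepsilon}^2$, which for $\varepsilon\in(0,\varepsilon_0)$ (shrunk if necessary) is bounded by a constant multiple of $\varepsilon\|\nabla v_\varepsilon\|_{\Sigma_I^\varepsilon}^2$. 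The second term is equivalent to $\|v_\varepsilon\|_{\Gamma_I^\varepsilon}^2$ up to the factor $(1-\varepsilon^{1/2}\|r_\varepsilon\|_{\infty,\Gamma_I})^{-1}$ by Corollary~\ref{lem:equiv}. Altogether we get constants $c_1,c_2,c_3$ independent of $\varepsilon\in(0,\varepsilon_0)$ such that
\begin{align*}
\|v_\varepsilon\|_\Omega^2\leq c_1\|\nabla v_\varepsilon\|_\Omega^2+c_2\,\varepsilon\|\nabla v_\varepsilon\|_{\Sigma_I^\varepsilon}^2+c_3\|v_\varepsilon\|_{\Gamma_I^\varepsilon}^2\,.
\end{align*}

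\textbf{Step 4 (absorption).} Inserting the estimate of Step~3 into the right-hand side of Step~1 (via Step~2) and choosing $\delta$ small enough to absorb the $\|\nabla v_\varepsilon\|_\Omega^2$, $\varepsilon\|\nabla v_\varepsilon\|_{\Sigma_I^\varepsilon}^2$, and $\|v_\varepsilon\|_{\Gamma_I^\varepsilon}^2$ contributions into the positive quadratic terms on the left, we obtain a uniform bound
\begin{align*}
\|\nabla v_\varepsilon\|_\Omega^2+\varepsilon\|\nabla v_\varepsilon\|_{\Sigma_I^\varepsilon}^2+\|v_\varepsilon\|_{\Gamma_I^\varepsilon}^2\leq C\,.
\end{align*}
Feeding this back into the inequality of Step~3 yields $\|v_\varepsilon\|_\Omega^2\leq C'$, which together gives \eqref{lem:equicoercive.0.2}.

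\textbf{Main obstacle.} The delicate point is Step~3: the energy controls $\|v_\varepsilon\|_{\Gamma_I^\varepsilon}$, but Friedrich requires $\|v_\varepsilon\|_{\Gamma_I}$, and the gap is traversed only at the price of a factor $\varepsilon\|\nabla v_\varepsilon\|_{\Sigma_I^\varepsilon}^2$; the transversality constant $\kappa>0$ of $k$ is what ensures the constant in this estimate stays finite as $\varepsilon\to 0^+$. The Lipschitz regularity of $\mathtt{d}$ is crucial here through Corollary~\ref{lem:equiv}.
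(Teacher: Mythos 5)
Your proof is correct and follows essentially the same route as the paper: Young's inequality on the linear terms, Friedrich's inequality combined with the layer Poincar\'e estimate of Corollary~\ref{lem:poincare_int} and the trace equivalence of Corollary~\ref{lem:equiv} to pass from $\|v_\varepsilon\|_{\Gamma_I}$ to $\|v_\varepsilon\|_{\Gamma_I^\varepsilon}$, and a final absorption. The only (cosmetic) difference is that you peel off $u_\infty$ at the start via $\|v_\varepsilon-u_\infty\|_{\Gamma_I^\varepsilon}^2\ge\tfrac12\|v_\varepsilon\|_{\Gamma_I^\varepsilon}^2-\|u_\infty\|_{\Gamma_I^\varepsilon}^2$, whereas the paper carries $v_\varepsilon-u_\infty$ through the estimates and removes $u_\infty$ only at the end; both are fine since $\|u_\infty\|_{\Gamma_I^\varepsilon}$ is uniformly bounded.
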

 
    \begin{proof}  
        To begin with,  from \eqref{lem:equicoercive.0.1},~we infer that
        $v_\varepsilon=u_D$ a.e.\ on $\Gamma_D$ and, due to~Young's~inequality, for every $\delta>0$, we find that
        \begin{align}\label{lem:equicoercive.1}
            \begin{aligned} 
            \tfrac{\lambda}{2}\|\nabla v_\varepsilon\|_{\Omega}^2+\tfrac{\varepsilon}{2}\|\nabla v_\varepsilon\|_{\Sigma_{I}^{\varepsilon}}^2+\tfrac{\beta}{2}\|v_\varepsilon -u_{\infty}\|_{\Gamma_{I}^{\varepsilon}}^2\leq \smash{E}_\varepsilon^{\mathtt{d}}(v_\varepsilon)&+\tfrac{1}{2\delta}\{\|f\|_{\Omega}^2+\| g\|_{\smash{(H^{\smash{\frac{1}{2}}}(\Gamma_N))^*}}^2\}\\&\quad+\tfrac{\delta}{2}\{\|v_\varepsilon\|_{\Omega}^2+\| v_\varepsilon\|_{\smash{H^{\smash{\frac{1}{2}}}(\Gamma_N)}}^2\}\,.
            \end{aligned}
        \end{align}
        By \hspace{-0.1mm}Friedrich's \hspace{-0.1mm}inequality \hspace{-0.1mm}\eqref{lem:poin_cont} \hspace{-0.1mm}and \hspace{-0.1mm}the \hspace{-0.1mm}trace \hspace{-0.1mm}theorem \hspace{-0.1mm}(\textit{cf}.~\hspace{-0.1mm}\cite[Thm.~\hspace{-0.1mm}II.4.3]{Galdi}), \hspace{-0.1mm}respectively,~\hspace{-0.1mm}there~\hspace{-0.1mm}holds\vspace{-4.5mm}
         \begin{subequations}\label{lem:equicoercive.4}
        \begin{align}\label{lem:equicoercive.4.1}
            \|v_\varepsilon\|_{\Omega}^2&\leq c_{\textrm{F}}\,\{\|\nabla v_\varepsilon \|_{\Omega}^2+\| v_\varepsilon\|_{\Gamma_I}^2\}\,,\\\label{lem:equicoercive.4.2}
             \| v_\varepsilon \|_{\smash{H^{\smash{\frac{1}{2}}}(\Gamma_N)}}^2&\leq c_{\textrm{Tr}}\,\{\|\nabla v_\varepsilon\|_{\Omega}^2+
            \|v_\varepsilon\|_{\Omega}^2\}\,.
        \end{align}
        \end{subequations}
        On the other hand, resorting to Corollary \ref{lem:poincare_int}\eqref{lem:poincare_1} and Corollary \ref{lem:equiv}, we observe that
        \begin{align}\label{lem:equicoercive.2}
        \begin{aligned} 
                 \|v_\varepsilon-u_\infty\|_{\Gamma_I}^2&\leq 2\{\|\{v_\varepsilon-u_\infty\}(\cdot +\varepsilon \mathtt{d}k)-\{v_\varepsilon-u_\infty\}\|_{\Gamma_I}^2+\|\{v_\varepsilon-u_\infty\}(\cdot +\varepsilon \mathtt{d}k)\|_{\Gamma_I}^2\}
                 \\&\leq 2\big\{\smash{\tfrac{\varepsilon\mathtt{d}_{\min}}{\kappa-\varepsilon \|\mathtt{d}\|_{\infty,\Gamma_I}\|R_\varepsilon\|_{\infty,\smash{D_{I}^{\varepsilon}}}}}
            \|\nabla \{v_\varepsilon -u_\infty\}\|_{\Sigma_{I}^{\varepsilon}}^2+ \tfrac{1}{1-\varepsilon^{1/2}\|r_\varepsilon\|_{\infty,\Gamma_I}}\|v_\varepsilon-u_\infty\|_{\Gamma_I^{\varepsilon}}^2\big\}
                 \,. 
                 \end{aligned}
        \end{align} 
        In summary, using \eqref{lem:equicoercive.0.1}, \eqref{lem:equicoercive.2} together with $\|\nabla u_\infty\|_{\Sigma_{I}^{\varepsilon}}\leq \|\nabla u_\infty\|_{\mathbb{R}^d\setminus\overline{\Omega}}$ (since $\Sigma_{I}^{\varepsilon}\subseteq \mathbb{R}^d\setminus\overline{\Omega}$), and \eqref{lem:equicoercive.4}
         in \eqref{lem:equicoercive.1}, for $\delta>0$ sufficiently small, we deduce that
        \begin{align}\label{lem:equicoercive.5}
           \sup_{\varepsilon\in (0,\varepsilon_0)}{\big\{\|\nabla v_\varepsilon\|_{\Omega}^2+\varepsilon\|\nabla v_\varepsilon\|_{\Sigma_{I}^{\varepsilon}}^2+\|v_\varepsilon-u_{\infty}\|_{\Gamma_{I}^{\varepsilon}}^2\big\}}<+\infty \,.
        \end{align}
        Eventually, from $\limsup_{\varepsilon\to 0^+}{\{\|u_{\infty}\|_{\Gamma_{I}^{\varepsilon}}\}} \leq  2\|u_{\infty}\|_{\Gamma_{I}}$ (\textit{cf}.\ Corollary \ref{lem:poincare_int}\eqref{lem:poincare_2}) and \eqref{lem:equicoercive.2},\eqref{lem:equicoercive.5} in \eqref{lem:equicoercive.4.1}, we conclude that  \eqref{lem:equicoercive.0.2} and, thus, the claimed equi-coercivity property applies.
    \end{proof}

    \subsection{Transversal distance function}

    \hspace{5mm}In order to establish the $\limsup$-estimate in the later $\Gamma$-convergence analysis (\textit{cf}.\ Lemma~\ref{lem:limsup_case2}), it is central to measure the distance of points in the insulating layer $\Sigma_{I}^{\varepsilon}$ to the insulated boundary $\Gamma_I$ with respect to the Lipschitz continuous (globally) transversal vector field $k\in (C^{0,1}(\partial\Omega))^d$.
    The latter is provided by the transversal distance function, the definition and most important properties of which can be found in the following lemma.
    
    \begin{lemma}\label{lem:transversal_distance_function}
        For each $\varepsilon\in (0,\varepsilon_0)$, let the \emph{transversal distance function} $\psi_\varepsilon\colon \Sigma_{I}^{\varepsilon}\to [0,\varepsilon \|\mathtt{d}\|_{\infty,\Gamma_I})$, for every $x=s+tk(s)\in \Sigma_{I}^{\varepsilon}$, where $s\in \Gamma_I$ and $t\in [0,\varepsilon \mathtt{d}(s))$, be defined by\enlargethispage{2mm}
        \begin{align*}
            \psi_\varepsilon(x)\coloneqq t\,.
        \end{align*}
        Then, we have that $\psi_\varepsilon\in H^{1,\infty}(\Sigma_{I}^{\varepsilon})$ with  $\psi_\varepsilon=0$ a.e.\ on $\Gamma_I$ and 
        \begin{subequations} \label{lem:transversal_distance_function.0}
        \begin{align}\label{lem:transversal_distance_function.1}
        \|\psi_\varepsilon \|_{\infty,\Sigma_{I}^{\varepsilon}}&\leq \varepsilon\|\mathtt{d}\|_{\infty,\Gamma_I}\,,\\
            \nabla \psi_\varepsilon(x)&= \tfrac{1}{k(s)\cdot n(s)}n(s)+tR_\varepsilon(x)\quad \text{ for a.e.\ }x=s+tk(s)\in \Sigma_{I}^{\varepsilon}\,,\label{lem:transversal_distance_function.2}
        \end{align}
         \end{subequations}
        where the remainders $R_\varepsilon\hspace{-0.1em}\in \hspace{-0.1em}(L^\infty(\Sigma_{I}^{\varepsilon}))^d$, $\varepsilon\hspace{-0.1em}\in \hspace{-0.1em}(0,\varepsilon_0)$, depend only on the Lipschitz characteristics of $\Gamma_I$ and satisfy $\sup_{\varepsilon\in (0,\varepsilon_0)}{\{\|R_\varepsilon\|_{\infty,\smash{\Sigma_{I}^{\varepsilon}}}\}}<+\infty$.
    \end{lemma}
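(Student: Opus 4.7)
The plan is to recognize that $\psi_\varepsilon = \pi_2 \circ \Phi_\varepsilon^{-1}$, where $\Phi_\varepsilon \colon D_I^\varepsilon \to \Sigma_I^\varepsilon$ is the bi-Lipschitz global parametrization \eqref{def:parametrization} and $\pi_2(s,t)\coloneqq t$. Since $\Phi_\varepsilon$ is bi-Lipschitz (as recalled in the discussion preceding Lemma \ref{lem:approx_trans_formula}), the inverse is Lipschitz, and thus $\psi_\varepsilon$ is Lipschitz on $\Sigma_I^\varepsilon$, hence $\psi_\varepsilon \in H^{1,\infty}(\Sigma_I^\varepsilon)$. The trace equation $\psi_\varepsilon = 0$ a.e.\ on $\Gamma_I$ and the sup-norm bound \eqref{lem:transversal_distance_function.1} are immediate from $t\in[0,\varepsilon\mathtt{d}(s))\subseteq[0,\varepsilon\|\mathtt{d}\|_{\infty,\Gamma_I})$.

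For the gradient formula \eqref{lem:transversal_distance_function.2}, I would work locally with the bi-Lipschitz charts $s_i \colon B_r \to \Gamma_I \cap s_i(B_r)$ from the proof of Lemma \ref{lem:approx_trans_formula2}, subordinate to a partition of unity. Define $\Phi_\varepsilon^i(\overline{x},t)\coloneqq s_i(\overline{x}) + t\,k(s_i(\overline{x}))$; by Rademacher's theorem this is a.e.\ differentiable with
\begin{align*}
\mathrm{D}\Phi_\varepsilon^i(\overline{x},t) = \underbrace{\big[\,\mathrm{D}s_i(\overline{x})\,,\,k(s_i(\overline{x}))\,\big]}_{=:A_i(\overline{x})} + t\,\underbrace{\big[\,\mathrm{D}(k\circ s_i)(\overline{x})\,,\,0\,\big]}_{=:B_i(\overline{x})}.
\end{align*}
Since $\psi_\varepsilon\circ\Phi_\varepsilon^i(\overline{x},t)=t$, the chain rule gives $\nabla\psi_\varepsilon\circ\Phi_\varepsilon^i = (\mathrm{D}\Phi_\varepsilon^i)^{-\top}e_t$ (the last column of the inverse transpose). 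Transversality $k\cdot n \ge \kappa>0$ ensures $A_i(\overline{x})$ is invertible with inverse bounded purely by $\|\nabla\gamma_i\|_\infty$ and $\kappa^{-1}$; a cofactor/Cramer computation (using that the columns of $\mathrm{D}s_i$ span the tangent plane at $s_i(\overline{x})$, complemented by $k$) yields $A_i(\overline{x})^{-\top}e_t = \tfrac{1}{k(s_i(\overline{x}))\cdot n(s_i(\overline{x}))}\,n(s_i(\overline{x}))$.

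The first-order correction follows from the resolvent identity
\begin{align*}
(A_i+tB_i)^{-1} = A_i^{-1} - t\,A_i^{-1}B_i(A_i+tB_i)^{-1},
\end{align*}
which is valid uniformly for $t \in [0,\varepsilon_0\|\mathtt{d}\|_{\infty,\Gamma_I})$ once $\varepsilon_0$ is chosen small enough (exactly the threshold ensuring bi-Lipschitzness of $\Phi_\varepsilon$). This produces a remainder whose $L^\infty$-bound depends only on $\|\nabla\gamma_i\|_\infty$, $\|\mathrm{D}k\|_\infty$, $\|\mathtt{d}\|_\infty$, and $\kappa^{-1}$ --- exactly the Lipschitz characteristics of $\Gamma_I$. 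Transferring back via $(\Phi_\varepsilon^i)^{-1}$ gives \eqref{lem:transversal_distance_function.2} locally.

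The main obstacle I anticipate is ensuring the pointwise formula \eqref{lem:transversal_distance_function.2} holds globally and that the remainder $R_\varepsilon$ is unambiguously defined with a uniform-in-$\varepsilon$ $L^\infty$-bound. This is resolved by noting that $\nabla\psi_\varepsilon$ is intrinsic (independent of the chart chosen), so the patch-wise remainders $R_\varepsilon^i(x) \coloneqq \tfrac{1}{t}\bigl[\nabla\psi_\varepsilon(x)-\tfrac{1}{k\cdot n}n\bigr]$ coincide on overlaps, defining a single $R_\varepsilon \in (L^\infty(\Sigma_I^\varepsilon))^d$. The uniform bound $\sup_{\varepsilon\in(0,\varepsilon_0)}\|R_\varepsilon\|_{\infty,\Sigma_I^\varepsilon}<+\infty$ then follows from the finiteness of the atlas $(s_i)_{i=1,\ldots,N}$ and the fact that all controlling constants depend only on $\Omega$, $k$, and $\mathtt{d}$ but not on $\varepsilon$.
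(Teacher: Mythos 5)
The paper does not prove this lemma in-text; it simply cites \cite[Lem.\ 4.5]{AKK2025_modelling}, so there is no internal proof to compare against. Your argument is correct and is exactly in the style of the proofs the paper does give for the neighbouring transformation formulas (Lemmas \ref{lem:approx_trans_formula} and \ref{lem:approx_trans_formula2}): bi-Lipschitz local charts $s_i$, the identity $\nabla\psi_\varepsilon\circ\Phi_\varepsilon^i=(\mathrm{D}\Phi_\varepsilon^i)^{-\top}e_t$, the observation that $A_i^{-\top}e_t$ is the unique vector annihilating the tangent columns of $\mathrm{D}s_i$ and pairing to $1$ with $k$ (hence equal to $n/(k\cdot n)$, with invertibility guaranteed by $k\cdot n\ge\kappa$), and the resolvent expansion $(A_i+tB_i)^{-1}=A_i^{-1}-tA_i^{-1}B_i(A_i+tB_i)^{-1}$ to isolate the $O(t)$ remainder. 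Two small points worth making explicit in a write-up: (i) the a.e.\ chain rule for the Lipschitz composition $\psi_\varepsilon\circ\Phi_\varepsilon^i=\pi_2$ is legitimate precisely because $\Phi_\varepsilon^i$ is bi-Lipschitz and therefore preserves null sets in both directions; and (ii) the remainder's bound genuinely involves the Lipschitz constants of $k$ and $\mathtt{d}$ in addition to those of $\Gamma_I$ (as you note), which is consistent with the paper's own loose phrasing in Lemmas \ref{lem:approx_trans_formula} and \ref{lem:approx_trans_formula2}, where $k$ and $\mathtt{d}$ are regarded as fixed data.
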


    \begin{proof}
        See \cite[Lem.\ 4.5]{AKK2025_modelling}.
    \end{proof}

    \newpage
    \section{$\Gamma$-convergence result}\label{sec:gamma_convergence}

    \hspace{5mm}In this section, we establish the main result of the paper, \textit{i.e.}, the stated $\Gamma(L^2(\mathbb{R}^d))$-convergence (as $\varepsilon\to  \smash{0^+}$) of the family of extended heat loss functionals $\smash{\overline{E}}_\varepsilon^{\mathtt{d}}\colon  \smash{L^2(\mathbb{R}^d)}\to \mathbb{R}\cup\{+\infty\}$, $\varepsilon\in (0,\varepsilon_0)$, for every $v_\varepsilon\in \smash{L^2(\mathbb{R}^d)}$ defined by
    \begin{align}\label{def:E_eps_extended}
       \smash{\overline{E}}_\varepsilon^{\mathtt{d}}(v_\varepsilon)\coloneqq \begin{cases}
            \smash{E}_\varepsilon^{\mathtt{d}}(v_\varepsilon)&\text{ if }v_\varepsilon\in H^1(\Omega_{I}^{\varepsilon})\,,\\[-0.5mm]
            +\infty & \text{ else}\,,
        \end{cases}
    \end{align}
    to the extended limit functional 
    $\smash{\overline{E}}^{\mathtt{d}}\colon \smash{L^2(\mathbb{R}^d)}\to \mathbb{R}\cup\{+\infty\}$,  for every $v\in \smash{L^2(\mathbb{R}^d)}$ defined by 
    \begin{align}\label{def:E_extended}
        \smash{\overline{E}}^{\mathtt{d}}(v)\coloneqq \begin{cases}
            \smash{E}^{\mathtt{d}}(v)& \text{ if }v\in H^1(\Omega)\,,\\[-0.5mm]
            +\infty& \text{ else}\,.
        \end{cases}
    \end{align}

    \begin{theorem}\label{thm:main}
       Let either of the following sufficient cases be satisfied:
       \begin{itemize}[noitemsep,topsep=2pt,leftmargin=!,labelwidth=\widthof{(Case 2)}]
           \item[(Case 1)]\hypertarget{Case 1}{}  $\Gamma_I$ is $C^{1,1}$-regular and $k=n\in (C^{0,1}(\Gamma_I))^d$;
           \item[(Case 2)]\hypertarget{Case 2}{} $\Gamma_I$  is  piece-wise flat   (\textit{i.e.},  there  exist $L\in  \mathbb{N}$  boundary parts  $\smash{\Gamma_{I}^{\ell}}\subseteq\Gamma_I$, $\ell=1,\ldots,L$, with constant outward normal vectors $n_{\ell}\in \mathbb{S}^{d-1}$ such that $\bigcup_{\ell=1}^L{\Gamma_{I}^{\ell}}= \Gamma_I$) and $d\leq 4$.
       \end{itemize}
        Then, if $\mathtt{d}\in C^{0,1}(\Gamma_I)$ with $\mathtt{d}\ge \mathtt{d}_{\textup{min}}$ in $\Gamma_I$, for a constant $\mathtt{d}_{\textup{min}}>0$, there holds \vspace{-0.75mm}
        \begin{align*}
            \Gamma(L^2(\mathbb{R}^d))\text{-}\lim_{\varepsilon\to \smash{0^+}}{\big\{\smash{\overline{E}}_\varepsilon^{\mathtt{d}}\big\}}= \smash{\overline{E}}^{\mathtt{d}}\,,
        \end{align*}
        \textit{i.e.}, the following two statements apply:
        \begin{itemize}[noitemsep,topsep=2pt,leftmargin=!,labelwidth=\widthof{$\bullet$}] 
        \item[$\bullet$] \textit{$\liminf$-estimate.} For every sequence $\smash{(v_\varepsilon)_{\varepsilon\in (0,\varepsilon_0)}}\subseteq \smash{L^2(\mathbb{R}^d)}$ and $v\in \smash{L^2(\mathbb{R}^d)}$, from $v_\varepsilon\to v$ in $\smash{L^2(\mathbb{R}^d)}$ $(\varepsilon\to 0^{+})$,
        it follows that\vspace{-0.75mm}
        \begin{align*}
            \liminf_{\varepsilon\to \smash{0^+}}{\big\{\smash{\overline{E}}_\varepsilon^{\mathtt{d}}(v_\varepsilon)\big\}}\ge \overline{E}(v)\,;
        \end{align*}

        \item[$\bullet$] \textit{$\limsup$-estimate.} For every $v\in \smash{L^2(\mathbb{R}^d)}$, there exists a recovery sequence $\smash{(v_\varepsilon)_{\varepsilon\in (0,\varepsilon_0)}}\subseteq \smash{L^2(\mathbb{R}^d)}$ such that  $v_\varepsilon\to v$ in $\smash{L^2(\mathbb{R}^d)}$ $(\varepsilon\to 0^{+})$ and\vspace{-0.5mm}
        \begin{align*}
         \limsup_{\smash{\varepsilon\to \smash{0^+}}}{\big\{\smash{\overline{E}}_\varepsilon^{\mathtt{d}}(v_\varepsilon)\big\}}\leq \overline{E}(v)\,.
        \end{align*}
    \end{itemize}
    \end{theorem}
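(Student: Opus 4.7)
The claim splits into the two standard $\Gamma$-convergence inequalities. It suffices to consider $v\in H^1(\Omega)$ with $v=u_D$ on $\Gamma_D$ (otherwise the $\Gamma$-limit is $+\infty$ and the estimates are vacuous). The bulk terms $\tfrac{\lambda}{2}\|\nabla\cdot\|_{\Omega}^2$, $-(f,\cdot)_\Omega$, $-\langle g,\cdot\rangle_{H^{1/2}(\Gamma_N)}$, together with the Dirichlet indicator, are handled by weak lower-semicontinuity and trace compactness in $H^1(\Omega)$, so the interesting content lies in the combined layer--Robin contribution $\tfrac{\varepsilon}{2}\|\nabla v_\varepsilon\|_{\Sigma_I^\varepsilon}^2+\tfrac{\beta}{2}\|v_\varepsilon-u_\infty\|_{\Gamma_I^\varepsilon}^2$, which is where the geometry of $\Gamma_I$ enters.

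For the \textbf{$\liminf$-estimate} I would pass to a subsequence realizing the $\liminf$ with finite energy and apply the equi-coercivity Lemma~\ref{lem:equicoercive} to obtain $v_\varepsilon|_\Omega\rightharpoonup v$ in $H^1(\Omega)$ (hence compact trace convergence $v_\varepsilon|_{\Gamma_I}\to v|_{\Gamma_I}$ in $L^2(\Gamma_I)$) and an $\varepsilon$-uniform bound on $\varepsilon\|\nabla v_\varepsilon\|_{\Sigma_I^\varepsilon}^2$. Introducing $a_\varepsilon(s)\coloneqq v_\varepsilon(s+\varepsilon\mathtt{d}(s)k(s))$, $b_\varepsilon(s)\coloneqq v_\varepsilon(s)$, $c_\varepsilon(s)\coloneqq u_\infty(s+\varepsilon\mathtt{d}(s)k(s))$, I would apply Lemmas~\ref{lem:approx_trans_formula}--\ref{lem:approx_trans_formula2} to pull the layer and Robin integrals back to $\Gamma_I$ and combine the transversal Poincar\'e estimate (Lemma~\ref{lem:poincare_int}) with the transversal distance function of Lemma~\ref{lem:transversal_distance_function} (whose gradient is $n/(k\cdot n)+O(\varepsilon)$) so as to expose the true effective \emph{normal} thickness $\varepsilon\widetilde{\mathtt{d}}=\varepsilon(k\cdot n)\mathtt{d}$ of the layer. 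This reduces the matter to the pointwise convex optimization
\[
\min_{a\in\mathbb{R}}\left\{\tfrac{1}{2\widetilde{\mathtt{d}}(s)}(a-b_\varepsilon)^2+\tfrac{\beta}{2}(a-c_\varepsilon)^2\right\}=\tfrac{\beta}{2}\cdot\frac{(b_\varepsilon-c_\varepsilon)^2}{1+\beta(k\cdot n)\mathtt{d}},
\]
and the $\liminf$-inequality then follows by integrating over $\Gamma_I$ and letting $\varepsilon\to 0^+$ using the trace convergences of $b_\varepsilon\to v|_{\Gamma_I}$ and $c_\varepsilon\to u_\infty|_{\Gamma_I}$ in $L^2(\Gamma_I)$.

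For the \textbf{$\limsup$-estimate} I would build a recovery sequence by affine interpolation in the transversal direction from $v(s)$ on $\Gamma_I$ to the pointwise optimizer
\[
a^{\star}(s)\coloneqq\frac{v(s)+\beta\widetilde{\mathtt{d}}(s)u_\infty(s)}{1+\beta\widetilde{\mathtt{d}}(s)}
\]
on $\Gamma_I^\varepsilon$, extended trivially outside $\Omega_I^\varepsilon$; energy convergence to $\overline{E}^\mathtt{d}(v)$ follows by direct computation, Lemma~\ref{lem:approx_trans_formula2} and Lemma~\ref{lem:Lebesgue_boundary_limit} (Lebesgue differentiation across vanishing layers). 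In \emph{Case 1} ($\Gamma_I\in C^{1,1}$, $k=n$) the Lipschitz regularity of $n$ renders the construction immediately $H^1$-admissible. The \textbf{main obstacle} is \emph{Case 2}: the piecewise constant normal $n$ is discontinuous across the edges $\partial\Gamma_I^\ell$, so the naive interpolant fails to lie in $H^1(\Sigma_I^\varepsilon)$. Following the authors' strategy in \cite{AKK2025_modelling}, I would mollify $n$ (and hence $\widetilde{\mathtt{d}}$ and $a^\star$) on an intermediate scale $\varrho_\varepsilon\downarrow 0$ with $\varrho_\varepsilon\gg\varepsilon$, combined with cut-off functions supported away from a $\varrho_\varepsilon$-tubular neighbourhood of the edges. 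The delicate balance between the cut-off's residual edge energy and the approximation error of the smoothed normal determines the admissible scaling of $\varrho_\varepsilon$, and controlling this via the $H^{1/2}(\Gamma_I)$-trace/Sobolev embedding is precisely what forces the dimensional restriction $d\leq 4$.
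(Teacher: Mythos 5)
Your overall architecture coincides with the paper's: reduction to the layer--Robin terms, equi-coercivity plus trace compactness for the $\liminf$, and an affine transversal interpolation from $v$ on $\Gamma_I$ to the pointwise optimizer $a^\star=\frac{v+\beta\widetilde{\mathtt{d}}u_\infty}{1+\beta\widetilde{\mathtt{d}}}$ on $\Gamma_I^\varepsilon$ for the $\limsup$. Your pointwise convex minimization is exactly the paper's weighted Young inequality with $\delta=1+\beta\mathtt{d}$ in \eqref{eq:liminf_case1.5} and \eqref{eq:liminf_case2.14}, and your Case~1 recovery sequence is the paper's cut-off \eqref{eq:limsup_case1.1}. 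One point you gloss over in the Case~2 $\liminf$: the transversal Poincar\'e inequality alone does not ``expose'' the normal thickness $\widetilde{\mathtt{d}}$; the paper must re-foliate the layer along the constant normals $n_\ell$ over the shrunken faces $\Gamma_I^{\varepsilon',\ell}=\Gamma_I^\ell\cap\phi_{\varepsilon'}^\ell(\Gamma_I^\ell)$ via the bi-Lipschitz maps \eqref{eq:liminf_case2.5}, and then apply the Poincar\'e inequality in the sublayers $\widetilde{\Sigma}_I^{\varepsilon',\ell}$; this step needs the injectivity argument showing $\widetilde{\Sigma}_I^{\varepsilon',\ell}\subseteq\Sigma_I^{\varepsilon'}$.

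The genuine gap is in the Case~2 $\limsup$. The paper does \emph{not} mollify $n$ at an intermediate scale $\varrho_\varepsilon\gg\varepsilon$ with additional edge cut-offs; it mollifies the \emph{signed distance function} at scale exactly $\varepsilon$ and sets $n_\varepsilon=\nabla(\omega_\varepsilon\ast\widehat{\mathrm{dist}}(\cdot,\partial\Omega))$. The decisive observation --- which your plan does not contain --- is that $\widehat{\mathrm{dist}}(\cdot,\partial\Omega)$ is affine near each flat face and hence \emph{invariant under mollification} there, so $n_\varepsilon=n_\ell$ and $\widetilde{\mathtt{d}}_\varepsilon=\widetilde{\mathtt{d}}$ exactly on $\widetilde{\Sigma}_I^{\varepsilon,\ell}$, with no smoothing error to balance; the only defect is an edge region $\Sigma_I^\varepsilon\setminus\widetilde{\Sigma}_I^\varepsilon$ of thickness $O(\varepsilon)$ in two directions, hence volume $O(\varepsilon^2)$, on which $\vert\nabla\widetilde{\mathtt{d}}_\varepsilon\vert=O(1/\varepsilon)$. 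Consequently the restriction $d\leq 4$ does not come from the $H^{1/2}(\Gamma_I)$-trace embedding, as you assert, but from the bulk estimate
\begin{align*}
\tfrac{1}{\varepsilon}\|\overline{v}-u_\infty\|_{\Sigma_I^\varepsilon\setminus\widetilde{\Sigma}_I^\varepsilon}^2
\leq c\,\varepsilon^{\frac{4}{d}-1}\|\overline{v}-u_\infty\|_{\frac{2d}{d-2},\Sigma_I^\varepsilon\setminus\widetilde{\Sigma}_I^\varepsilon}^2\,,
\end{align*}
\emph{i.e.}, from $H^1(\mathbb{R}^d)\hookrightarrow L^{2d/(d-2)}(\mathbb{R}^d)$ combined with the $\varepsilon^2$-volume of the edge region (the exponent $\tfrac{4}{d}-1$ is nonnegative precisely for $d\leq 4$, with the borderline case $d=4$ rescued by absolute continuity of the integral). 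Your ``delicate balance'' between $\varrho_\varepsilon$ and $\varepsilon$ is left entirely unverified, and with the mechanism you name it would not reproduce the correct dimensional threshold; as written, the Case~2 recovery-sequence construction is not established.
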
 

    \begin{remark}[comments on Theorem \ref{thm:main}]
        \begin{itemize}[noitemsep,topsep=2pt,leftmargin=!,labelwidth=\widthof{(iii)}]
            \item[(i)] Case \hyperlink{Case 1}{1} and Case \hyperlink{Case 2}{2} can be considered~together:  A  combination of the proofs of Lemmas \ref{lem:liminf_case1} and  \ref{lem:limsup_case1} with those of Lemmas \ref{lem:liminf_case2} and  \ref{lem:limsup_case2} yields the assertion of Theorem \ref{thm:main} in the mixed case, where the isolated boundary part $\Gamma_I$ consists of $C^{1,1}$-regular and flat segments;
            \item[(ii)] 
            A combination of the proofs of Theorem \ref{thm:main}, \cite[Thm.\ II.2]{AcerbiButtazzo1986}, and \cite[Thm.\ 5.1]{AKK2025_modelling},~should~make it possible to consider insulated boundary parts $\Gamma_I$, which split into relatively open boundary parts $\Gamma_I^{\textup{cond}}\subseteq \Gamma_I$ with conductive heat transfer and $\Gamma_I^{\textup{conv}}\subseteq \Gamma_I$ with convective heat transfer.
        \end{itemize}
    \end{remark}

   Our argument is organized in two parts: first, we establish the $\liminf$-estimate, considering  Case \hyperlink{Case 1}{1} and Case \hyperlink{Case 2}{2} separately; second, we establish the $\limsup$-estimate, again, distinguishing between these two cases.

    \subsection{$\liminf$-estimate}\vspace{-0.5mm}

    \hspace{5mm}In this subsection, we establish the stated  $\liminf$-estimate  in Theorem \ref{thm:main} for  Case~\hyperlink{Case 1}{1}~and~Case~\hyperlink{Case 2}{2}. To begin with, we consider Case \hyperlink{Case 1}{1}, which in the case of pure insulation (\textit{i.e.}, $\Gamma_I=\partial\Omega$) and trivial ambient temperature (\textit{i.e.}, $u_{\infty}=0$) has already been studied in \cite[Thm.\ 3.1]{PietraNitschScalaTrombetti2021}.


    \begin{lemma}[$\liminf$-estimate; Case \hyperlink{Case 1}{1}]\label{lem:liminf_case1}
        Let Case \hyperlink{Case 1}{1} be satisfied. 
        Then, if $\mathtt{d}\in C^{0,1}(\Gamma_I)$ with $\mathtt{d}\ge \mathtt{d}_{\textup{min}}$ in $\Gamma_I$, for a constant $\mathtt{d}_{\textup{min}}>0$, for every sequence $\smash{(v_\varepsilon)_{\varepsilon\in (0,\varepsilon_0)}}\subseteq L^2(\mathbb{R}^d)$ and $v\in L^2(\mathbb{R}^d)$, from $v_\varepsilon\to v$ in $L^2(\mathbb{R}^d)$ $(\varepsilon\to \smash{0^+})$, 
        it follows that\vspace{-0.5mm}
        \begin{align*}
            \liminf_{\varepsilon\to \smash{0^+}}{\big\{\smash{\overline{E}}_\varepsilon^{\mathtt{d}}(v_\varepsilon)\big\}}\ge \overline{E}(v)\,.
        \end{align*}
    \end{lemma}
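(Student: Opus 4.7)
The plan is to reduce to a subsequence with uniformly bounded energy, extract a weak $H^1(\Omega)$-limit, and then treat the volume, affine, and insulating-layer terms separately. Without loss of generality, assume $\ell \coloneqq \liminf_{\varepsilon\to 0^+}\overline{E}_\varepsilon^{\mathtt{d}}(v_\varepsilon) < +\infty$ and extract a (not relabelled) subsequence realizing $\ell$ with $\sup_{\varepsilon}\overline{E}_\varepsilon^{\mathtt{d}}(v_\varepsilon) < +\infty$. By \eqref{def:E_eps_extended} this forces $v_\varepsilon \in H^1(\Omega_I^\varepsilon)$ and $v_\varepsilon = u_D$ a.e.\ on $\Gamma_D$. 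The equi-coercivity of Lemma \ref{lem:equicoercive} yields a uniform $H^1(\Omega)$-bound, so along a further subsequence $v_\varepsilon \rightharpoonup v$ in $H^1(\Omega)$; the hypothesized $L^2(\mathbb{R}^d)$-convergence identifies the limit and trace continuity transfers the Dirichlet condition to $v$. Weak lower semicontinuity handles $\tfrac{\lambda}{2}\|\nabla v\|_\Omega^2$, while weak convergence and continuity of the trace $H^1(\Omega) \to H^{1/2}(\Gamma_N)$ pass the affine terms $-(f,v_\varepsilon)_\Omega - \langle g, v_\varepsilon\rangle_{H^{1/2}(\Gamma_N)}$ to the limit.

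The main obstacle is the insulating-layer contribution $J_\varepsilon \coloneqq \tfrac{\varepsilon}{2}\|\nabla v_\varepsilon\|_{\Sigma_I^\varepsilon}^2 + \tfrac{\beta}{2}\|v_\varepsilon - u_\infty\|_{\Gamma_I^\varepsilon}^2$, which must produce $\tfrac{\beta}{2}\|(1+\beta\mathtt{d})^{-1/2}(v-u_\infty)\|_{\Gamma_I}^2$ in the limit (using $k\cdot n \equiv 1$ in Case~\hyperlink{Case 1}{1}). Lemmas \ref{lem:approx_trans_formula} and \ref{lem:approx_trans_formula2} recast $J_\varepsilon$ as integrals on $\Gamma_I$ of fiberwise quantities along $\phi_s(t) \coloneqq v_\varepsilon(s+tn(s))$, multiplied by Jacobian factors of the form $1 + o(1)$. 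Bounding $|\nabla v_\varepsilon|^2 \geq |\phi_s'|^2$ fiberwise and applying the elementary convex-minimization inequality
\begin{align*}
\tfrac{\alpha}{2}\int_0^L|\phi'(t)|^2\,\mathrm{d}t + \tfrac{\beta}{2}|\phi(L) - u|^2 \geq \tfrac{\alpha\beta}{2(\alpha+\beta L)}|\phi(0) - u|^2
\end{align*}
(proved by explicit Euler--Lagrange computation: the minimizer is affine with Robin endpoint condition) with $\alpha = \varepsilon(1-O(\varepsilon))$, $L = \varepsilon\mathtt{d}(s)$, $u = u_\infty(s+\varepsilon\mathtt{d}(s)n(s))$, then integrating over $\Gamma_I$, yields
\begin{align*}
J_\varepsilon \geq \int_{\Gamma_I} c_\varepsilon(s)\,\vert v_\varepsilon(s) - u_\infty(s+\varepsilon\mathtt{d}(s)n(s))\vert^2\,\mathrm{d}s\,,
\end{align*}
with $c_\varepsilon \to \tfrac{\beta}{2(1+\beta\mathtt{d})}$ uniformly on $\Gamma_I$.

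To close, three convergences on the right-hand side suffice: compactness of the trace $H^1(\Omega)\to L^2(\Gamma_I)$ gives $v_\varepsilon \to v$ strongly in $L^2(\Gamma_I)$; the regularity $u_\infty \in H^1(\mathbb{R}^d\setminus\overline{\Omega})$ combined with the bi-Lipschitz nature of $s\mapsto s+\varepsilon\mathtt{d}(s)n(s)$ (as used in the proof of Lemma \ref{lem:approx_trans_formula2}) yields $u_\infty(\cdot + \varepsilon\mathtt{d}n) \to u_\infty$ in $L^2(\Gamma_I)$; and the prefactor converges uniformly. Passing to the $\liminf$ then delivers exactly $\tfrac{\beta}{2}\|(1+\beta\mathtt{d})^{-1/2}(v-u_\infty)\|_{\Gamma_I}^2$, and assembling all contributions completes the proof. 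The delicate point is monitoring the $O(\varepsilon^{1/2})$ and $O(\varepsilon)$ corrections from Lemmas \ref{lem:approx_trans_formula} and \ref{lem:approx_trans_formula2}, ensuring that the fiberwise inequality remains applicable and that the prefactor $c_\varepsilon$ remains uniformly bounded away from zero as $\varepsilon \to 0^+$; the point-wise Poincaré inequality of Lemma \ref{lem:poincare} is what underlies this fiberwise bound.
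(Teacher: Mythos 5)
Your proposal is correct and follows essentially the same route as the paper: equi-coercivity plus weak $H^1(\Omega)$-convergence for the volume and affine terms, the pointwise Poincar\'e inequality (Lemma \ref{lem:poincare}) together with the transformation formulas (Lemmas \ref{lem:approx_trans_formula}, \ref{lem:approx_trans_formula2}) to reduce the layer terms to a two-point quantity per fiber, and strong $L^2(\Gamma_I)$-convergence of the traces and of $u_\infty(\cdot+\varepsilon\mathtt{d}n)$ to pass to the limit. Your explicit one-dimensional Robin minimization $\tfrac{\alpha}{2}\int_0^L|\phi'|^2+\tfrac{\beta}{2}|\phi(L)-u|^2\ge\tfrac{\alpha\beta}{2(\alpha+\beta L)}|\phi(0)-u|^2$ is exactly the computation the paper performs via the binomial formula and the pointwise $\delta$-Young inequality with $\delta=1+\beta\mathtt{d}$, just packaged more transparently.
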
\newpage

    \begin{proof} Let $\smash{(v_\varepsilon)_{\varepsilon\in (0,\varepsilon_0)}}\subseteq L^2(\mathbb{R}^d)$ be a sequence such that $v_\varepsilon\to v$ in $L^2(\mathbb{R}^d)$ $(\varepsilon\to 0^{+})$. Then,\linebreak 
        without loss of generality, we may assume that $\smash{\liminf_{\varepsilon\to \smash{0^+}}{\{\smash{\overline{E}}_\varepsilon^{\mathtt{d}}(v_\varepsilon)\}}<+\infty}$~(\mbox{otherwise},~\mbox{trivially} we have that $\liminf_{\varepsilon\to \smash{0^+}}{\{\smash{\overline{E}}_\varepsilon^{\mathtt{d}}(v_\varepsilon)\}}=+\infty\ge \smash{\overline{E}}^{\mathtt{d}}(v)$).
        As a consequence, we can find  a subsequence $(v_{\varepsilon'})_{\varepsilon'\in (0,\varepsilon_0)}\subseteq L^2(\mathbb{R}^d)$ with $\smash{v_{\varepsilon'}|_{\smash{\Omega^{\varepsilon'}_{I}}}\in H^1(\Omega^{\varepsilon'}_{I})}$ and $v_{\varepsilon'}=u_D$ a.e.\ on $\Gamma_D$ for all $\varepsilon'\in (0,\varepsilon_0)$~such~that\enlargethispage{1mm}\vspace{-0.5mm}
        \begin{align}\label{eq:liminf_case1.1}
            E_{\varepsilon'}^{\mathtt{d}}(v_{\varepsilon'})\to \liminf_{\varepsilon\to \smash{0^+}}{\big\{\smash{\overline{E}}_\varepsilon^{\mathtt{d}}(v_\varepsilon)\big\}}\quad (\varepsilon'\to 0^{+})\,.
        \end{align}
        Due to the equi-coercivity of $\smash{\overline{E}}_\varepsilon^{\mathtt{d}}\colon  L^2(\mathbb{R}^d)\to  \mathbb{R}\cup\{+\infty\}$,  $\varepsilon\in (0,\varepsilon_0)$, (\textit{cf}.\ Lemma \ref{lem:equicoercive}), from \eqref{eq:liminf_case1.1}, it follows that\vspace{-0.5mm}
        \begin{align}\label{eq:liminf_case1.0.1}
            \sup_{\varepsilon'\in (0,\varepsilon_0)}\smash{\big\{\|v_{\varepsilon'}\|_{\Omega}^2+\|\nabla v_{\varepsilon'}\|_{\Omega}^2+\|v_{\varepsilon'}\|_{\Gamma^{\smash{\varepsilon'}}_{I}}^2+\varepsilon'\|\nabla v_{\varepsilon'}\|_{\Sigma^{\smash{\varepsilon'}}_{I}}^2\big\}}<+\infty\,.
        \end{align}
        From \eqref{eq:liminf_case1.0.1}, 
        using the weak continuity of the trace operator from $\smash{H^1(\Omega)}$ to $\smash{H^{\smash{\frac{1}{2}}}(\partial\Omega)}$ (\textit{cf}.~\cite[Thm.\ II.4.3]{Galdi}) and the compact embedding $H^{\smash{\frac{1}{2}}}(\partial\Omega)\hookrightarrow\hookrightarrow L^2(\partial\Omega)$, we deduce that $v|_{\Omega}\in H^1(\Omega)$~and\vspace{-0.5mm}
        \begin{subequations}\label{eq:liminf_case1.2.0}
        \begin{alignat}{3}\label{eq:liminf_case1.2}
          v_{\varepsilon'}&\rightharpoonup  v&&\quad \text{ in }H^1(\Omega)&&\quad(\varepsilon'\to0^{+})\,,\\[-0.35mm]\label{eq:liminf_case1.2.1}v_{\varepsilon'}&\rightharpoonup  v&&\quad \text{ in }H^{\smash{\frac{1}{2}}}(\partial\Omega)&&\quad(\varepsilon'\to0^{+})\,,\\[-0.25mm]\label{eq:liminf_case1.3}
            v_{\varepsilon'}&\to v&&\quad \text{ in }L^2(\partial\Omega)&&\quad(\varepsilon'\to0^{+})\,.
        \end{alignat}
        \end{subequations}
        Since $v_{\varepsilon'}=u_D$ a.e.\ on $\Gamma_D$ for all $\varepsilon'\in (0,\varepsilon_0)$, from  \eqref{eq:liminf_case1.3}, we infer that $v=u_D$~a.e.~on~$\Gamma_D$.
        Moreover, from \eqref{eq:liminf_case1.2.0}, we infer that
        \begin{align}\label{eq:liminf_case1.4}
            \liminf_{\varepsilon'\to \smash{0^+}}{\big\{\tfrac{\lambda}{2}\|\nabla v_{\varepsilon'}\|_{\Omega}^2-(f,v_{\varepsilon'})_{\Omega}-\langle g,v_{\varepsilon'}\rangle_{\smash{H^{\smash{\frac{1}{2}}}(\Gamma_N)}}\big\}}\ge \tfrac{\lambda}{2}\|\nabla v\|_{\Omega}^2-(f,v)_{\Omega}-\langle g,v\rangle_{\smash{H^{\smash{\frac{1}{2}}}(\Gamma_N)}}\,.
        \end{align}
        Using Corollary \ref{lem:poincare_int}\eqref{lem:poincare_1}, Corollary \ref{lem:equiv}, the binomial formula, and the point-wise $\delta$-Young inequality 
        with~$\delta(s)\coloneqq 1+ \beta\mathtt{d}(s)$ for a.e.\ $s\in \Gamma_{I}$,
        we observe that\enlargethispage{1mm}
        \begin{align}\label{eq:liminf_case1.5}
            \begin{aligned}
            &\liminf_{\varepsilon'\to \smash{0^+}}{\big\{\tfrac{\varepsilon'}{2}\|\nabla v_{\varepsilon'}\|_{\Sigma_{I}^{\smash{\varepsilon'}}}^2+\tfrac{\beta}{2}\|v_{\varepsilon'}-u_{\infty}\|_{\smash{\Gamma^{\varepsilon'}_{I}}}^2\big\}}
            \\[-1mm]&\qquad\overset{\eqref{lem:poincare_1}}{\ge } \liminf_{\varepsilon'\to \smash{0^+}}\big\{\tfrac{1}{2}\|\mathtt{d}^{-\smash{\frac{1}{2}}}\{v_{\varepsilon'}(\cdot+\varepsilon' \mathtt{d} n )-v_{\varepsilon'}\}\|_{\Gamma_{I}}^2+\tfrac{\beta}{2}\|\{v_{\varepsilon'}-u_{\infty}\}(\cdot+\varepsilon' \mathtt{d} n )\|_{\Gamma_{I}}^2\big\}
            \\&\qquad\,\overset{\eqref{lem:approx_trans_formula.0}}{=}\liminf_{\varepsilon'\to \smash{0^+}}\big\{\tfrac{1}{2}\|\mathtt{d}^{-\smash{\frac{1}{2}}}\{v_{\varepsilon'}-u_{\infty}\}(\cdot+\varepsilon' \mathtt{d} n )-\mathtt{d}^{-\smash{\frac{1}{2}}}\{v_{\varepsilon'}-u_{\infty}(\cdot+\varepsilon' \mathtt{d} n )\}\|_{\Gamma_{I}}^2\\[-0.75mm]&\qquad\qquad\qquad\quad+\tfrac{1}{2}\|(\beta\mathtt{d})^{\smash{\frac{1}{2}}}\mathtt{d}^{\smash{-\frac{1}{2}}}\{v_{\varepsilon'}-u_{\infty}\}(\cdot+\varepsilon'\mathtt{d}n)\|_{\Gamma_{I}}^2\big\}
            \\&\qquad\;\;= \liminf_{\varepsilon'\to \smash{0^+}}\big\{\tfrac{1}{2}\|(1
            +\beta\mathtt{d})^{\smash{\frac{1}{2}}}\mathtt{d}^{\smash{-\frac{1}{2}}}\{v_{\varepsilon'}-u_{\infty}\}(\cdot+\varepsilon' \mathtt{d} n )\|_{\Gamma_{I}}^2\\[-0.75mm]&\qquad\qquad\qquad\quad
            -(\mathtt{d}^{-1}\{v_{\varepsilon'}-u_{\infty}\}(\cdot+\varepsilon' \mathtt{d} n ),v_{\varepsilon'}-u_{\infty}(\cdot+\varepsilon'\mathtt{d} n ))_{\smash{\Gamma_{I}}}
            \\[-0.25mm]&\qquad\qquad\qquad\quad+\tfrac{1}{2}\|\mathtt{d}^{\smash{-\frac{1}{2}}}\{v_{\varepsilon'}-u_{\infty}(\cdot+\varepsilon' \mathtt{d} n )\}\|_{\Gamma_{I}}^2\big\}
            \\&\qquad\;\;\ge \liminf_{\varepsilon'\to \smash{0^+}}\big\{\tfrac{1}{2}\big\|\{1-\delta+\beta\mathtt{d}\}\mathtt{d}^{\smash{-\frac{1}{2}}}\{v_{\varepsilon'}-u_{\infty}\}(\cdot+\varepsilon' \mathtt{d} n )\|_{\Gamma_{I}}^2
            \\[-0.75mm]&\qquad\qquad\qquad\quad+\tfrac{1}{2}\|\{1-\tfrac{1}{\delta}\}\mathtt{d}^{\smash{-\frac{1}{2}}}\{v_{\varepsilon'}-u_{\infty}(\cdot+\varepsilon' \mathtt{d} n )\}\|_{\Gamma_{I}}^2\big\}
            \\&\qquad\;\;=\liminf_{\varepsilon'\to \smash{0^+}}{\big\{\tfrac{\beta}{2}\|(1+\beta\mathtt{d})^{-\smash{\frac{1}{2}}}\{v_{\varepsilon'}-u_{\infty}(\cdot+\varepsilon' \mathtt{d} n)\}\|_{\Gamma_{I}}^2\big\}}\,.
            \end{aligned}
        \end{align} 
        Next, using \eqref{eq:liminf_case1.3} and that\vspace{-0.5mm}
        \begin{align}\label{eq:liminf_case1.6}
            \smash{u_{\infty}(\cdot+\varepsilon' \mathtt{d} n)\to u_{\infty}\quad \text{ in }L^2(\Gamma_{I})\quad (\varepsilon\to \smash{0^+})}\,,
        \end{align}
        which,  due to Corollary \ref{lem:poincare_int}\eqref{lem:poincare_1}, follows from
        \begin{align*}
            \begin{aligned} 
            \|u_{\infty}(\cdot+\varepsilon' \mathtt{d} n)-u_{\infty}\|_{\Gamma_{I}}^2\leq \smash{\tfrac{\varepsilon' \mathtt{d}_{\min}}{1-\varepsilon' \|\mathtt{d}\|_{\infty,\Gamma_I}\|R_{\varepsilon'}\|_{\smash{\infty,D_{I}^{\varepsilon'}}}}}\|\nabla u_{\infty}\|_{\smash{\Sigma_{I}^{\varepsilon'}}}^2\to 0\quad (\varepsilon\to \smash{0^+})\,,
            \end{aligned}
        \end{align*}
        where the remainders $R_{\varepsilon'}\in  L^\infty(D_{I}^{\varepsilon'})$, $\varepsilon'\in  (0,\varepsilon_0)$,  are as in Lemma \ref{lem:approx_trans_formula}, 
        from \eqref{eq:liminf_case1.5},~we~\mbox{infer}~that
        \begin{align}\label{eq:liminf_case1.7}
            \liminf_{\varepsilon'\to \smash{0^+}}{\big\{\tfrac{\varepsilon'}{2}\|\nabla v_{\varepsilon'}\|_{\Sigma_{I}^{\smash{\varepsilon'}}}^2+\tfrac{\beta}{2}\|v_{\varepsilon'}-u_{\infty}\|_{\smash{\Gamma^{\varepsilon'}_{I}}}^2\big\}}\ge \tfrac{\beta}{2}\|(1+\beta\mathtt{d})^{-\smash{\frac{1}{2}}}\{v-u_{\infty}\}\|_{\Gamma_{I}}^2\,.
        \end{align}
        In summary, from  \eqref{eq:liminf_case1.4} and \eqref{eq:liminf_case1.7},
        we conclude the claimed $\liminf$-estimate for the Case \hyperlink{Case 1}{1}.
    \end{proof}

    Next, let us consider Case \hyperlink{Case 2}{2}.
        
    \begin{lemma}[$\liminf$-estimate; Case \hyperlink{Case 2}{2}]\label{lem:liminf_case2}
        Let Case \hyperlink{Case 2}{2} be satisfied. 
        Then, if $\mathtt{d}\in C^{0,1}(\Gamma_I)$ with $\mathtt{d}\ge \mathtt{d}_{\textup{min}}$ in $\Gamma_I$, for a constant $\mathtt{d}_{\textup{min}}>0$, for every sequence $\smash{(v_\varepsilon)_{\varepsilon\in (0,\varepsilon_0)}}\subseteq L^2(\mathbb{R}^d)$ and $v\in L^2(\mathbb{R}^d)$, from $v_\varepsilon\to v$ in $L^2(\mathbb{R}^d)$ $(\varepsilon\to 0^+)$, 
        it follows that
        \begin{align*}
            \liminf_{\varepsilon\to \smash{0^+}}{\big\{\smash{\overline{E}}_\varepsilon^{\mathtt{d}}(v_\varepsilon)\big\}}\ge \overline{E}(v)\,.
        \end{align*}
    \end{lemma}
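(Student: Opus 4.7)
The plan is to follow the strategy of Case~\hyperlink{Case 1}{1} (Lemma~\ref{lem:liminf_case1}) but exploit the piece-wise flatness of $\Gamma_I$ to replace the weight $\mathtt{d}$ in the Poincar\'e-plus-$\delta$-Young chain by $\widetilde{\mathtt{d}}=(k\cdot n)\mathtt{d}$; this is what produces the sharp constant $(1+\beta(k\cdot n)\mathtt{d})^{-\smash{\frac{1}{2}}}$ in the limit functional.

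As in the proof of Lemma~\ref{lem:liminf_case1}, I may assume $\liminf_{\varepsilon\to \smash{0^+}}\overline{E}_\varepsilon^{\mathtt{d}}(v_\varepsilon)<+\infty$, extract a subsequence $(v_{\varepsilon'})_{\varepsilon'\in (0,\varepsilon_0)}$ realising the liminf, and invoke the equi-coercivity in Lemma~\ref{lem:equicoercive} together with weak compactness in $H^1(\Omega)$, the weak continuity of the trace into $H^{\smash{\frac{1}{2}}}(\partial\Omega)$, and the compact embedding $H^{\smash{\frac{1}{2}}}(\partial\Omega)\hookrightarrow\hookrightarrow L^2(\partial\Omega)$ to deduce that $v|_\Omega\in H^1(\Omega)$, $v_{\varepsilon'}\rightharpoonup v$ in $H^1(\Omega)$ and in $H^{\smash{\frac{1}{2}}}(\partial\Omega)$, $v_{\varepsilon'}\to v$ in $L^2(\partial\Omega)$, and $v=u_D$ a.e.\ on $\Gamma_D$. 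The volume contribution $\tfrac{\lambda}{2}\|\nabla v\|_{\Omega}^2-(f,v)_{\Omega}-\langle g,v\rangle_{H^{\smash{\frac{1}{2}}}(\Gamma_N)}$ of $\overline{E}^{\mathtt{d}}(v)$ then passes to the liminf exactly as in Case~\hyperlink{Case 1}{1}.

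For the convective interface contribution I decompose $\Gamma_I=\bigcup_{\ell=1}^L\overline{\Gamma_I^\ell}$ and work on each flat segment $\Gamma_I^\ell$ separately. Since $n_\ell$ is constant on $\Gamma_I^\ell$, the one-dimensional Poincar\'e inequality applied along the parallel normal fibres above $\Gamma_I^\ell$, together with Fubini in Cartesian coordinates aligned with $n_\ell$ and the point-wise bound $|\nabla v_{\varepsilon'}|^2\geq|n_\ell\cdot\nabla v_{\varepsilon'}|^2$, yields the sharpened estimate
\begin{align*}
    \tfrac{1}{2}\big\|\widetilde{\mathtt{d}}^{-\smash{\frac{1}{2}}}\{v_{\varepsilon'}(\cdot+\nu_*n_\ell)-v_{\varepsilon'}\}\big\|_{\Gamma_I^\ell}^2 \leq \tfrac{\varepsilon'}{2}\|\nabla v_{\varepsilon'}\|_{\Sigma_I^{\varepsilon'}}^2+o(1)\,,
\end{align*}
where $\nu_*(s)=\varepsilon'\widetilde{\mathtt{d}}(s)+O((\varepsilon')^2)$ is the exit height of the normal fibre from $s\in\Gamma_I^\ell$, as follows from the bi-Lipschitz property of $\Phi_{\varepsilon'}$ and the flatness of $\Gamma_I^\ell$ via the implicit function theorem. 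Dually, $s\mapsto s+\nu_*(s)n_\ell$ is a bi-Lipschitz parametrisation of $\Gamma_I^{\varepsilon',\ell}$ with Jacobian $1+o(1)$, so $\tfrac{\beta}{2}\|v_{\varepsilon'}-u_\infty\|_{\Gamma_I^{\varepsilon',\ell}}^2=(1+o(1))\tfrac{\beta}{2}\|\{v_{\varepsilon'}-u_\infty\}(\cdot+\nu_*n_\ell)\|_{\Gamma_I^\ell}^2$. Adding these and running the same binomial/$\delta$-Young chain as in the proof of Lemma~\ref{lem:liminf_case1} with $\mathtt{d}$ replaced by $\widetilde{\mathtt{d}}$ and $\delta(s)=1+\beta\widetilde{\mathtt{d}}(s)$ produces the lower bound $\tfrac{\beta}{2}\|(1+\beta\widetilde{\mathtt{d}})^{-\smash{\frac{1}{2}}}\{v_{\varepsilon'}-u_\infty(\cdot+\nu_*n_\ell)\}\|_{\Gamma_I^\ell}^2$; summing over $\ell$, using $v_{\varepsilon'}\to v$ in $L^2(\Gamma_I)$ together with $u_\infty(\cdot+\nu_*n_\ell)\to u_\infty$ in $L^2(\Gamma_I)$ (the latter via Corollary~\ref{lem:poincare_int}\eqref{lem:poincare_1} and $u_\infty\in H^1(\mathbb{R}^d\setminus\overline{\Omega})$, as in Case~\hyperlink{Case 1}{1}), and taking the liminf delivers the claimed convective part of $\overline{E}^{\mathtt{d}}(v)$.

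The main obstacle will be the rigorous justification of the sharpened Poincar\'e estimate at the edges $\partial\Gamma_I^\ell$ of the flat pieces: the \emph{normal subslab} $\widetilde{\Sigma}_I^{\varepsilon',\ell}\coloneqq\bigcup_{s\in\Gamma_I^\ell}\{s+\nu n_\ell:0\leq \nu<\nu_*(s)\}$ agrees with the \emph{transversal subslab} $\Sigma_I^{\varepsilon',\ell}$ only in the interior; near $\partial\Gamma_I^\ell$ the two differ on an edge layer of volume $O((\varepsilon')^2)$, where normal rays either exit through an artificial face of $\Sigma_I^{\varepsilon'}$ or pass into the transversal slab above an adjacent flat piece. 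To absorb the gradient contribution from this thin layer into the $o(1)$-term uniformly in $\varepsilon'$, the equi-coercivity bound $\sup_{\varepsilon'}\varepsilon'\|\nabla v_{\varepsilon'}\|_{\Sigma_I^{\varepsilon'}}^2<+\infty$ has to be coupled with a higher-integrability/Sobolev trace argument; I expect the dimension restriction $d\leq 4$ to enter precisely here, securing the required edge-layer absorption.
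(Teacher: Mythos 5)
Your overall skeleton is the right one and matches the paper's: the standard compactness reduction, a segment-wise treatment of the piece-wise flat boundary, a Poincar\'e estimate along \emph{normal} fibres inscribed in the transversal slab, and the binomial/$\delta$-Young chain run with the weight $\widetilde{\mathtt{d}}=(k\cdot n)\mathtt{d}$ and $\delta=1+\beta\widetilde{\mathtt{d}}$. However, your treatment of the edge layers near $\partial\Gamma_I^{\ell}$ contains a genuine gap. You propose to keep the \emph{full} segment $\Gamma_I^{\ell}$ on the left-hand side of the sharpened Poincar\'e estimate and to absorb the discrepancy between the normal subslab and the transversal slab into an $o(1)$-term by a ``higher-integrability/Sobolev trace argument'', with $d\le 4$ entering there. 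This cannot work as stated: the only control you have on the gradient in the insulating layer is the equi-coercivity bound $\sup_{\varepsilon'}\varepsilon'\|\nabla v_{\varepsilon'}\|_{\Sigma_I^{\varepsilon'}}^2<+\infty$, which gives no higher integrability of $\nabla v_{\varepsilon'}$ and does not prevent the gradient from concentrating precisely in the $O((\varepsilon')^2)$-volume edge layer; moreover, for base points $s$ near $\partial\Gamma_I^{\ell}$ the normal ray may leave $\Omega_I^{\varepsilon'}$ before reaching height $\nu_*(s)$, so $v_{\varepsilon'}(s+\nu_*(s)n_\ell)$ is not even controlled by the $H^1(\Omega_I^{\varepsilon'})$-regularity of $v_{\varepsilon'}$.

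The fix is both simpler and different in spirit: since you are proving a \emph{lower} bound, you should discard the edge regions rather than absorb them. The paper introduces the tangential shift $\phi_{\varepsilon'}^{\ell}(s)=s+\varepsilon'\mathtt{d}(s)\{k(s)-(k(s)\cdot n_\ell)n_\ell\}$ and restricts to the good sets $\Gamma_I^{\varepsilon',\ell}=\Gamma_I^{\ell}\cap\phi_{\varepsilon'}^{\ell}(\Gamma_I^{\ell})$, over which the normal subslabs $\widetilde{\Sigma}_I^{\varepsilon',\ell}$ are genuinely contained in $\Sigma_I^{\varepsilon'}$ (by the injectivity of $\phi_{\varepsilon'}^{\ell}$) and the corresponding pieces $\widetilde{\Gamma}_I^{\varepsilon',\ell}$ lie in $\Gamma_I^{\varepsilon'}$; the nonnegative gradient and boundary energies outside these subslabs are simply dropped. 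Since $\vert\Gamma_I^{\ell}\setminus\Gamma_I^{\varepsilon',\ell}\vert=O(\varepsilon')$, one has $\chi_{\Gamma_I^{\varepsilon',\ell}}\to 1$ a.e., and the full boundary integral is recovered in the limit from the strong convergence $v_{\varepsilon'}\to v$ in $L^2(\partial\Omega)$ (plus the uniform convergence $\widetilde{\mathtt{d}}\circ(\phi_{\varepsilon'}^{\ell})^{-1}\to\widetilde{\mathtt{d}}$ of the shifted weight). In particular, the restriction $d\le 4$ plays no role whatsoever in the $\liminf$-estimate; it is needed only in the $\limsup$-estimate, where the recovery sequence forces you to bound the energy on the edge layer $\Sigma_I^{\varepsilon}\setminus\widetilde{\Sigma}_I^{\varepsilon}$ from \emph{above}, and there the Sobolev embedding combined with $\vert\Sigma_I^{\varepsilon}\setminus\widetilde{\Sigma}_I^{\varepsilon}\vert\le c_\Omega\varepsilon^2$ does the job.
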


    \begin{proof}
        Let $\smash{(v_\varepsilon)_{\varepsilon\in (0,\varepsilon_0)}}\subseteq L^2(\mathbb{R}^d)$ be a sequence such that $v_\varepsilon\to v$ in $L^2(\mathbb{R}^d)$~$(\varepsilon\to 0^{+})$, which,  without loss of generality, satisfies $\smash{\liminf_{\varepsilon\to \smash{0^+}}{\{\smash{\overline{E}}_\varepsilon^{\mathtt{d}}(v_\varepsilon)\}}<+\infty}$. 
        Let $(v_{\varepsilon'})_{\varepsilon'\in (0,\varepsilon_0)}\subseteq L^2(\mathbb{R}^d)$ be a subsequence  with $\smash{v_{\varepsilon'}|_{\smash{\Omega^{\varepsilon'}_{I}}}\in H^1(\Omega^{\varepsilon'}_{I})}$ and $v_{\varepsilon'}=u_D$ a.e.~on~$\Gamma_D$ for all $\varepsilon'\in (0,\varepsilon_0)$ such that 
        \begin{align}\label{eq:liminf_case2.1}
            E_{\varepsilon'}^{\mathtt{d}}(v_{\varepsilon'})\to \liminf_{\varepsilon\to \smash{0^+}}{\big\{\smash{\overline{E}}_\varepsilon^{\mathtt{d}}(v_\varepsilon)\big\}}\quad (\varepsilon'\to 0^{+})\,.
        \end{align}
        Due to the equi-coercivity of $\smash{\overline{E}}_\varepsilon^{\mathtt{d}}\colon  L^2(\mathbb{R}^d)\to  \mathbb{R}\cup\{+\infty\}$,  $\varepsilon\in (0,\varepsilon_0)$, (\textit{cf}.\ Lemma \ref{lem:equicoercive}), from \eqref{eq:liminf_case2.1}, it follows that
        \begin{align}\label{eq:liminf_case2.2}
            \sup_{\varepsilon'\in (0,\varepsilon_0)}\smash{\big\{\|v_{\varepsilon'}\|_{\Omega}^2+\|\nabla v_{\varepsilon'}\|_{\Omega}^2+\|v_{\varepsilon'}\|_{\Gamma^{\smash{\varepsilon'}}_{I}}^2+\varepsilon'\|\nabla v_{\varepsilon'}\|_{\Sigma^{\smash{\varepsilon'}}_{I}}^2\big\}}<+\infty\,.\\[-6mm]\notag
        \end{align}
        From \eqref{eq:liminf_case2.2}, using the weak continuity of the trace operator from $\smash{H^1(\Omega)}$ to $\smash{H^{\smash{\frac{1}{2}}}(\partial\Omega)}$ (\textit{cf}.~\cite[Thm.\ II.4.3]{Galdi}) and the compact embedding $H^{\smash{\frac{1}{2}}}(\partial\Omega)\hookrightarrow \hookrightarrow L^2(\partial\Omega)$, we deduce that $v|_{\Omega}\in H^1(\Omega)$~and  
        \begin{subequations}\label{eq:liminf_case2.3}
        \begin{alignat}{3}\label{eq:liminf_case2.3.1}
          v_{\varepsilon'}&\rightharpoonup  v&&\quad \text{ in }H^1(\Omega)&&\quad(\varepsilon'\to0^{+})\,,\\[-0.35mm]\label{eq:liminf_case2.3.2}v_{\varepsilon'}&\rightharpoonup  v&&\quad \text{ in }H^{\smash{\frac{1}{2}}}(\partial\Omega)&&\quad(\varepsilon'\to0^{+})\,,\\[-0.25mm]\label{eq:liminf_case2.3.3}
            v_{\varepsilon'}&\to v&&\quad \text{ in }L^2(\partial\Omega)&&\quad(\varepsilon'\to0^{+})\,.
        \end{alignat}
        \end{subequations}
        Since $v_{\varepsilon'}=u_D$ a.e.\ on $\Gamma_D$ for all $\varepsilon'\in (0,\varepsilon_0)$, from  \eqref{eq:liminf_case2.3.3}, we infer that $v=u_D$ a.e.\ on $\Gamma_D$.
        Moreover, from \eqref{eq:liminf_case2.3}, we infer that
        \begin{align}\label{eq:liminf_case2.4}
            \liminf_{\varepsilon'\to \smash{0^+}}{\big\{\tfrac{\lambda}{2}\|\nabla v_{\varepsilon'}\|_{\Omega}^2-(f,v_{\varepsilon'})_{\Omega}-\langle g,v_{\varepsilon'}\rangle_{\smash{H^{\smash{\frac{1}{2}}}(\Gamma_N)}}\big\}}\ge \tfrac{\lambda}{2}\|\nabla v\|_{\Omega}^2-(f,v)_{\Omega}-\langle g,v\rangle_{\smash{H^{\smash{\frac{1}{2}}}(\Gamma_N)}}\,.
        \end{align}
        Since $\Gamma_I$ is piece-wise flat, there exists flat boundary parts $\Gamma_{I}^{\ell}\subseteq \Gamma_I$, $\ell=1,\ldots,L$, with constant outward unit normal vectors $n_{\ell}\in \mathbb{S}^{d-1}$ such that $\bigcup_{\ell=1}^L{\Gamma_{I}^{\ell}}= \Gamma_I$.
        Then, for every $\ell=1,\ldots,L$, we introduce the transformation mapping $\phi_{\varepsilon'}^{\ell}\colon \Gamma_{I}^{\ell}\to \mathbb{R}^d$, for every $s\in \Gamma_{I}^{\ell}$ defined~by~(\textit{cf}.~Figure~\ref{fig:phi_ell})
        \begin{align}\label{eq:liminf_case2.5}
        \begin{aligned} 
            \phi_{\varepsilon'}^{\ell}(s)&\coloneqq 
            s+\varepsilon' \mathtt{d}(s)\smash{\{k(s)-(k(s)\cdot n_{\ell})n_{\ell}\}} \,,
            \end{aligned}
        \end{align}
        which, by construction, for every $\ell=1,\ldots,L$ and $\varepsilon'\in (0,\widetilde{\varepsilon}_0)$, where $\widetilde{\varepsilon}_0>0$ is sufficiently~small~and fixed,  is bi-Lipschitz continuous and satisfies\enlargethispage{5mm}
        \begin{align}\label{eq:liminf_case2.6} \smash{\|\mathrm{id}_{\mathbb{R}^d}-\smash{\phi_{\varepsilon'}^{\ell}\|_{\infty,\smash{\Gamma_{I}^{\ell}}}\leq 2\|\mathtt{d}\|_{\infty,\smash{\Gamma_{I}^{\ell}}}\varepsilon'\,.}}
        \end{align}
        
        \begin{figure}[H]
            \centering

  
\tikzset {_wslvnmkzp/.code = {\pgfsetadditionalshadetransform{ \pgftransformshift{\pgfpoint{0 bp } { 0 bp }  }  \pgftransformrotate{-90 }  \pgftransformscale{2 }  }}}
\pgfdeclarehorizontalshading{_mvaa4mi6d}{150bp}{rgb(0bp)=(0.89,0.89,0.89);
rgb(37.5bp)=(0.89,0.89,0.89);
rgb(37.5bp)=(0.86,0.86,0.86);
rgb(37.5bp)=(0.82,0.82,0.82);
rgb(62.5bp)=(1,1,1);
rgb(100bp)=(1,1,1)}

  
\tikzset {_x4dxa1qze/.code = {\pgfsetadditionalshadetransform{ \pgftransformshift{\pgfpoint{0 bp } { 0 bp }  }  \pgftransformrotate{-90 }  \pgftransformscale{2 }  }}}
\pgfdeclarehorizontalshading{_vj61ft4ip}{150bp}{rgb(0bp)=(0.89,0.89,0.89);
rgb(37.5bp)=(0.89,0.89,0.89);
rgb(37.5bp)=(0.86,0.86,0.86);
rgb(37.5bp)=(0.82,0.82,0.82);
rgb(62.5bp)=(1,1,1);
rgb(100bp)=(1,1,1)}
\tikzset{every picture/.style={line width=0.75pt}} 


            \caption{\hspace{-0.15mm}Schematic \hspace{-0.15mm}diagram \hspace{-0.15mm}of \hspace{-0.15mm}the~\hspace{-0.15mm}transformation~\hspace{-0.15mm}\mbox{mapping}~\hspace{-0.15mm}${\phi_{\varepsilon'}^{\ell}\colon\hspace{-0.15em}  \Gamma_I^{\ell}\hspace{-0.15em}\to\hspace{-0.15em} \mathbb{R}^d}$,~\hspace{-0.15mm}${\ell\hspace{-0.15em}=\hspace{-0.15em}1,\ldots,L}$,~\hspace{-0.15mm}(\textit{cf}.~\hspace{-0.15mm}\eqref{eq:liminf_case2.5}).}
            \label{fig:phi_ell}
        \end{figure}
        
        \noindent Due to \eqref{eq:liminf_case2.6},  
        for the \textit{local insulated boundary parts}  
        \begin{align}\label{eq:liminf_case2.7}
            \smash{\Gamma_{I}^{\smash{\varepsilon',\ell}} \coloneqq \Gamma_{I}^{\ell}\cap \phi_{\varepsilon'}^{\ell}(\Gamma_{I}^{\ell})\,,\;\ell=1,\ldots,L\,,}
        \end{align}
         there holds $\smash{\sup_{\varepsilon'\in (0,\varepsilon_0')}{\{\frac{1}{\varepsilon'}\vert \Gamma_{I}^{\ell}\setminus \Gamma_{I}^{\smash{\varepsilon',\ell}}\vert\}}<+\infty}$,  $ \ell=1,\ldots,L $,
        and, thus, up to  
        a subsequence 
        \begin{align}\label{eq:liminf_case2.8}
    \smash{\chi_{\smash{\Gamma_{I}^{\smash{\varepsilon',\ell}}}}\to  1\quad\text{ a.e.\ in }\Gamma_{I}^{\ell}\quad (\varepsilon'\to 0^{+})\,.}
        \end{align} 
        On the other hand, from \eqref{eq:liminf_case2.6}, in turn, for every $\varepsilon'\in (0,\varepsilon_0)$ and $\ell=1,\ldots,L$, we infer that
        \begin{align*}
            \smash{\|\mathrm{id}_{\mathbb{R}^d}-(\phi_{\varepsilon'}^{\ell})^{-1}\|_{\infty,\smash{\phi_{\varepsilon'}^{\ell}(\Gamma_{I}^{\ell})}}=\|\phi_{\varepsilon'}^{\ell}-\mathrm{id}_{\mathbb{R}^d}\|_{\infty,\smash{\Gamma_{I}^{\ell}}}\leq 2\|\mathtt{d}\|_{\infty,\smash{\Gamma_{I}^{\ell}}}\varepsilon'\,,}
        \end{align*}
        which, exploiting that $\widetilde{\mathtt{d}}\in H^{1,\infty}(\Gamma_I^{\ell})$ (since $\mathtt{d}\in H^{1,\infty}(\Gamma_I^{\ell})$, $k\in (H^{1,\infty}(\Gamma_I^{\ell}))^d$, and $n=n_{\ell}$~in~$\Gamma_I^{\ell}$) for all $\ell=1,\ldots,L$ and \eqref{eq:relation_h_h_tilde},  for every $\ell=1,\ldots,L$, abbreviating $\widetilde{\mathtt{d}}_{\varepsilon'}^{\ell}\coloneqq \widetilde{\mathtt{d}}\circ (\phi_{\varepsilon'}^{\ell})^{-1}$, implies that
        \begin{align}\label{eq:liminf_case2.9}
            \smash{\|\widetilde{\mathtt{d}}_{\varepsilon'}^{\ell}- \widetilde{\mathtt{d}}\|_{\infty,\smash{\phi_{\varepsilon'}^{\ell}(\Gamma_{I}^{\ell})}}\leq 2\|\nabla\widetilde{\mathtt{d}}\|_{\infty,\smash{\Gamma_I^{\ell}}}\|\mathtt{d}\|_{\infty,\smash{\Gamma_{I}^{\ell}}}\varepsilon'\,.}
        \end{align}
        Next, for every $\ell=1,\ldots,L$, 
        we define the \textit{local insulating layer}  and \textit{local interacting insulation boundary part} (each in direction of $n_\ell$), respectively, (\textit{cf}.\ Figure~\ref{fig:construction}) 
        \begin{subequations}
        \begin{align}\label{def:sigma_tilde} \widetilde{\Sigma}_{I}^{\smash{\varepsilon',\ell}}&\coloneqq \big\{ \widetilde{s}+t n_{\ell}\mid \widetilde{s}\in \Gamma_{I}^{\smash{\varepsilon',\ell}}\,,\;t\in [0,\varepsilon'\widetilde{\mathtt{d}}_{\varepsilon'}^{\ell}(\widetilde{s}))\big\} \subseteq \Sigma_{I}^{\varepsilon'}\,,\\ 
          \label{def:gamma_tilde}
            \widetilde{\Gamma}_{I}^{\smash{\varepsilon',\ell}}&\coloneqq \big\{ \widetilde{s}+\varepsilon'\widetilde{\mathtt{d}}_{\varepsilon'}^{\ell}(\widetilde{s})n_{\ell}\mid \widetilde{s}\in \Gamma_{I}^{\smash{\varepsilon',\ell}}\big\}\subseteq \Gamma_I^{\varepsilon'}\,,
        \end{align}
        \end{subequations} 
        where the inclusion in \eqref{def:sigma_tilde} results from the bijectivity of the transformation~mappings~\eqref{eq:liminf_case2.5}: if on the contrary $\widetilde{\Sigma}_{I}^{\smash{\varepsilon',\ell}}\not\subseteq \Sigma_{I}^{\varepsilon'}$, 
        there would exist $\widetilde{s}\in \Gamma_{I}^{\varepsilon',\ell}$ such that~the~line~\mbox{segment} $\widetilde{s}+[0,\varepsilon'\widetilde{\mathtt{d}}_{\varepsilon'}^{\ell}(\widetilde{s}))n_{\ell}$ passes (at least) twice through $\Gamma_I^{\varepsilon}$. Then, however, there would exist distinct $\widetilde{s}_i\in \Gamma_{I}^{\varepsilon',\ell}$,~${i=1,2}$, such that $\widetilde{s}\hspace{-0.15em}=\hspace{-0.15em}\phi_{\varepsilon'}^{\ell}(\widetilde{s}_1)\hspace{-0.15em}=\hspace{-0.15em}\phi_{\varepsilon'}^{\ell}(\widetilde{s}_2)$, contradicting the bijectivity~of~the~transformation~\mbox{mappings}~\eqref{eq:liminf_case2.5}.\vspace{-1mm}\enlargethispage{3mm}
        \begin{figure}[H]
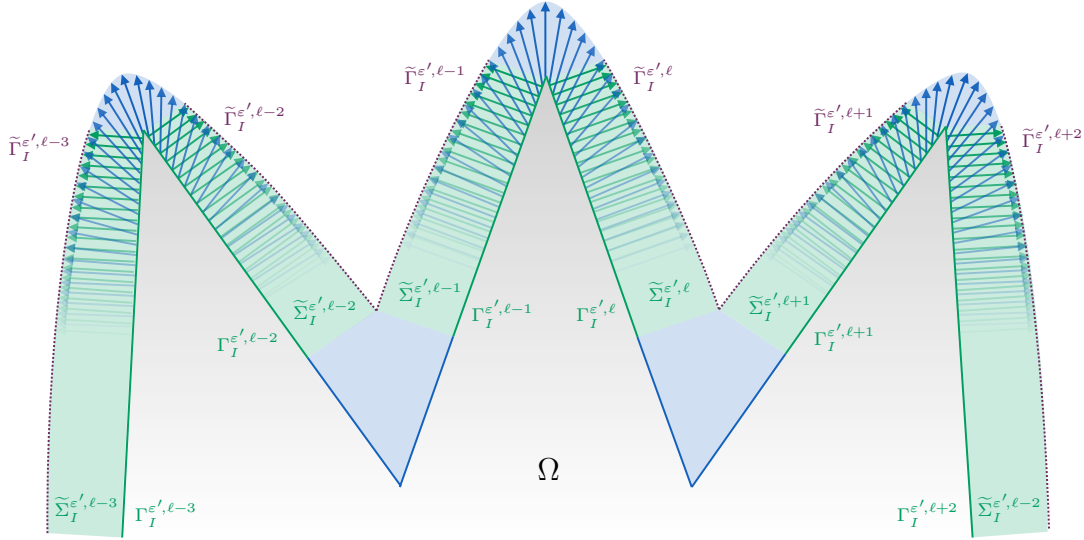

            \centering

  
\tikzset {_a2qpj8h0l/.code = {\pgfsetadditionalshadetransform{ \pgftransformshift{\pgfpoint{0 bp } { 0 bp }  }  \pgftransformrotate{-90 }  \pgftransformscale{2 }  }}}
\pgfdeclarehorizontalshading{_nflpc24eh}{150bp}{rgb(0bp)=(0.89,0.89,0.89);
rgb(37.5bp)=(0.89,0.89,0.89);
rgb(37.5bp)=(0.86,0.86,0.86);
rgb(37.5bp)=(0.82,0.82,0.82);
rgb(62.5bp)=(1,1,1);
rgb(100bp)=(1,1,1)}
\tikzset{every picture/.style={line width=0.75pt}} 


            \caption{Schematic diagram of the construction in the proof of Lemma \ref{lem:liminf_case2}: \textit{(a)} local insulated boundary parts $\Gamma_I^{\varepsilon',\ell}$, $\ell=1,\ldots,L$, (\textit{cf}.\ \eqref{eq:liminf_case2.7}) \textit{(b)}  local insulating layers $\widetilde{\Sigma}_{I}^{\varepsilon',\ell}$, $\ell=1,\ldots,L$,  (\textit{cf}.\ \eqref{def:sigma_tilde});  \textit{(c)} local interacting boundary parts $\widetilde{\Gamma}_I^{\varepsilon',\ell}$, $\ell=1,\ldots,L$, (\textit{cf}.\ \eqref{def:gamma_tilde}).}
            \label{fig:construction}
        \end{figure}
        
        \noindent Resorting to Corollary \ref{lem:poincare_int}\eqref{lem:poincare_1} (with $\Sigma_{I}^{\varepsilon}=\smash{\widetilde{\Sigma}_{I}^{\smash{\varepsilon',\ell}}}$, \textit{i.e.},
        $\Gamma_I=\smash{\Gamma_{I}^{\smash{\varepsilon',\ell}}}$, $\Gamma_I^{\varepsilon'}=\widetilde{\Gamma}_{I}^{\smash{\varepsilon',\ell}}$,  $k=n_{\ell}$, $\mathtt{d}=\widetilde{\mathtt{d}}_{\varepsilon'}^{\ell}$, and ${\varepsilon=\varepsilon'}$), for every  $\ell=1,\ldots,L$, we find that
        \begin{align}\label{eq:liminf_case2.10}
            \begin{aligned} 
           \|(\widetilde{\mathtt{d}}_{\varepsilon'}^{\ell})^{-\smash{\frac{1}{2}}}\{v_{\varepsilon'}(\cdot +\varepsilon' \widetilde{\mathtt{d}}_{\varepsilon'}^{\ell} n_{\ell})-v_{\varepsilon'}\}\|_{\smash{\Gamma_{I}^{\smash{\varepsilon',\ell}}}}^2\leq
           \tfrac{\varepsilon'}{1-\varepsilon'\|\widetilde{\mathtt{d}}\|_{\infty,\Gamma_I}\|\widetilde{R}_{\smash{\varepsilon'}}^{\ell}\|_{\infty,\smash{\widetilde{D}_{I}^{\smash{\varepsilon',\ell}}}}}\|\nabla v_{\varepsilon'}\|_{\widetilde{\Sigma}_{I}^{\smash{\varepsilon',\ell}}}^2\,,
            \end{aligned}
        \end{align}
        where $\widetilde{R}_{\smash{\varepsilon'}}^{\ell}\in L^\infty(\widetilde{D}_{I}^{\smash{\varepsilon',\ell}})$, $\widetilde{D}_{I}^{\smash{\varepsilon',\ell}}\coloneqq \bigcup_{\widetilde{s}\in \smash{\Gamma_{I}^{\smash{\varepsilon',\ell}}}}{\{\widetilde{s}\}\times [0,\varepsilon' \widetilde{\mathtt{d}}_{\varepsilon'}^{\ell}(\widetilde{s}))}$, $\varepsilon'\in (0,\widetilde{\varepsilon}_0)$,  are~as~in~Lemma~\ref{lem:approx_trans_formula}. 
        From Corollary \ref{lem:equiv}, for every $\ell=1,\ldots,L$, we obtain
        \begin{align}\label{eq:liminf_case2.12}
           \|v_{\varepsilon'}-u_\infty\|_{\widetilde{\Gamma}_{I}^{\smash{\varepsilon',\ell}}}^2\ge \{1-(\varepsilon')^{\smash{\frac{1}{2}}}\|\smash{\widetilde{r}_{\smash{\varepsilon'}}^{\ell}}\|_{\infty,\Gamma_I}\}^{-1}\|\{v_{\varepsilon'}-u_\infty\}(\cdot+\varepsilon'\widetilde{\mathtt{d}}_{\varepsilon'}^{\ell}n_{\ell})\|_{\smash{\Gamma_{I}^{\smash{\varepsilon',\ell}}}}^2\,,
        \end{align}
        where the remainders $\smash{\widetilde{r}_{\smash{\varepsilon'}}^{\ell}\in L^\infty(\Gamma_{I}^{\smash{\varepsilon',\ell}})}$, $\varepsilon'\in (0,\widetilde{\varepsilon}_0)$, are as in Lemma \ref{lem:approx_trans_formula2}.\newline
        \hphantom{.}\hspace{5mm}In summary, from \eqref{eq:liminf_case2.10} and \eqref{eq:liminf_case2.12}, we deduce that
        \begin{align}\label{eq:liminf_case2.13}
            \begin{aligned}
            &\liminf_{\varepsilon'\to \smash{0^+}}{\big\{\tfrac{\varepsilon'}{2}\|\nabla v_{\varepsilon'}\|_{\Sigma_{I}^{\smash{\varepsilon'}}}^2+\tfrac{\beta}{2}\|v_{\varepsilon'}-u_{\infty}\|_{\Gamma^{\varepsilon}_{I}}^2\big\}}\\& \ge \liminf_{\varepsilon'\to \smash{0^+}}{\bigg\{\sum_{\ell=1}^{L}{\big\{\tfrac{\varepsilon'}{2}\|\nabla v_{\varepsilon'}\|_{\widetilde{\Sigma}_{I}^{\smash{\varepsilon',\ell}}}^2+\tfrac{\beta}{2}\|v_{\varepsilon'}-u_{\infty}\|_{\widetilde{\Gamma}^{\smash{\varepsilon',\ell}}_{I}}^2\big\}}\bigg\}}
            \\&\ge  \liminf_{\varepsilon'\to \smash{0^+}}\bigg\{\sum_{\ell=1}^{L}\big\{\tfrac{1}{2}\|(\widetilde{\mathtt{d}}_{\varepsilon'}^{\ell})^{\smash{-\frac{1}{2}}}\{ v_{\varepsilon'}(\cdot+\varepsilon'\widetilde{\mathtt{d}}_{\varepsilon'}^{\ell} n_{\ell} )-v_{\varepsilon'}\}\|_{\smash{\Gamma_{I}^{\smash{\varepsilon',\ell}}}}^2\\&\qquad\qquad\qquad\quad+\tfrac{\beta}{2}\|\{v_{\varepsilon'}-u_{\infty}\}(\cdot+\varepsilon'\widetilde{\mathtt{d}}_{\varepsilon'}^{\ell} n_{\ell} )\|_{\smash{\Gamma_{I}^{\smash{\varepsilon',\ell}}}}^2\big\}\bigg\}\,.
            \end{aligned}
        \end{align}
           Similar to \eqref{eq:liminf_case1.5}, for every $\ell=1,\ldots,L$, applying the binomial formula and point-wise Young's inequality with $\delta_{\varepsilon'}^{\ell}(s)\coloneqq 1+\beta \widetilde{\mathtt{d}}_{\varepsilon'}^{\ell}(s)$ for a.e.\ $s\in \Gamma_{I}^{\varepsilon
           ',\ell}$, we find that
        \begin{align}\label{eq:liminf_case2.14}
       \begin{aligned} 
           &\tfrac{1}{2}\|(\widetilde{\mathtt{d}}_{\varepsilon'}^{\ell})^{\smash{-\frac{1}{2}}}\{ v_{\varepsilon'}(\cdot+\varepsilon'\widetilde{\mathtt{d}}_{\varepsilon'}^{\ell} n_{\ell} )-v_{\varepsilon'}\}\|_{\smash{\Gamma_{I}^{\smash{\varepsilon',\ell}}}}^2+\tfrac{\beta}{2}\|\{v_{\varepsilon'}-u_{\infty}\}(\cdot+\varepsilon'\widetilde{\mathtt{d}}_{\varepsilon'}^{\ell}n_{\ell} )\|_{\smash{\Gamma_{I}^{\smash{\varepsilon',\ell}}}}^2
            \\&=\tfrac{1}{2}\|(\widetilde{\mathtt{d}}_{\varepsilon'}^{\ell})^{\smash{-\frac{1}{2}}}\{v_{\varepsilon'}-u_{\infty}\}(\cdot+\varepsilon'\widetilde{\mathtt{d}}_{\varepsilon'}^{\ell} n_{\ell} )-(\widetilde{\mathtt{d}}_{\varepsilon'}^{\ell})^{\smash{-\frac{1}{2}}}\{v_{\varepsilon'}-u_{\infty}(\cdot+\varepsilon'\widetilde{\mathtt{d}}_{\varepsilon'}^{\ell} n_{\ell} )\}\|_{\smash{\Gamma_{I}^{\smash{\varepsilon',\ell}}}}^2\\&\quad+\tfrac{1}{2}\|(\beta\widetilde{\mathtt{d}}_{\varepsilon'}^{\ell})^{\smash{\frac{1}{2}}}(\widetilde{\mathtt{d}}_{\varepsilon'}^{\ell})^{\smash{-\frac{1}{2}}}\{v_{\varepsilon'}-u_{\infty}\}(\cdot+\varepsilon'\widetilde{\mathtt{d}}_{\varepsilon'}^{\ell}n_{\ell})\|_{\smash{\Gamma_{I}^{\smash{\varepsilon',\ell}}}}^2
            \\&\ge\tfrac{1}{2}\|(1
            +\beta\widetilde{\mathtt{d}}_{\varepsilon'}^{\ell})^{\smash{\frac{1}{2}}}(\widetilde{\mathtt{d}}_{\varepsilon'}^{\ell})^{\smash{-\frac{1}{2}}}
            \{v_{\varepsilon'}-u_{\infty}\}(\cdot+\varepsilon'\widetilde{\mathtt{d}}_{\varepsilon'}^{\ell}n_{\ell} )\|_{\smash{\Gamma_{I}^{\smash{\varepsilon',\ell}}}}^2\\&\quad
            -((\widetilde{\mathtt{d}}_{\varepsilon'}^{\ell})^{\smash{-1}}\{v_{\varepsilon'}-u_{\infty}\}(\cdot+\varepsilon'\widetilde{\mathtt{d}}_{\varepsilon'}^{\ell}n_{\ell} ),v_{\varepsilon'}-u_{\infty}(\cdot+\varepsilon'\widetilde{\mathtt{d}}_{\varepsilon'}^{\ell}n_{\ell} ))_{\smash{\Gamma_{I}^{\smash{\varepsilon',\ell}}}}
            \\&\quad+\tfrac{1}{2}\|(\widetilde{\mathtt{d}}_{\varepsilon'}^{\ell})^{\smash{-\frac{1}{2}}}\{v_{\varepsilon'}-u_{\infty}(\cdot+\varepsilon'\widetilde{\mathtt{d}}_{\varepsilon'}^{\ell}n_{\ell}) \}\|_{\smash{\Gamma_{I}^{\smash{\varepsilon',\ell}}}}^2
            \\&\ge \tfrac{1}{2}\|\{1-\delta_{\varepsilon'}^{\ell}+\beta\widetilde{\mathtt{d}}_{\varepsilon'}^{\ell}\}(\widetilde{\mathtt{d}}_{\varepsilon'}^{\ell})^{\smash{-\frac{1}{2}}}\{v_{\varepsilon'}-u_{\infty}\}(\cdot+\varepsilon'\widetilde{\mathtt{d}}_{\varepsilon'}^{\ell}n_{\ell} )\|_{\smash{\Gamma_{I}^{\smash{\varepsilon',\ell}}}}^2
            \\&\quad+\tfrac{1}{2}\|\{1-\tfrac{1}{\delta_{\varepsilon'}^{\ell}}\}(\widetilde{\mathtt{d}}_{\varepsilon'}^{\ell})^{\smash{-\frac{1}{2}}}\{v_{\varepsilon'}-u_{\infty}(\cdot+\varepsilon'\widetilde{\mathtt{d}}_{\varepsilon'}^{\ell}n_{\ell} )\}\|_{\smash{\Gamma_{I}^{\smash{\varepsilon',\ell}}}}^2
            \\&=\tfrac{\beta}{2}\|(1+\beta\widetilde{\mathtt{d}}_{\varepsilon'}^{\ell})^{-\smash{\frac{1}{2}}}\{v_{\varepsilon'}-u_{\infty}(\cdot+\varepsilon'\widetilde{\mathtt{d}}_{\varepsilon'}^{\ell}n_{\ell})\}\|_{\smash{\Gamma_{I}^{\smash{\varepsilon',\ell}}}}^2\,.
            \end{aligned}
        \end{align}
        Next, using that, by \eqref{eq:liminf_case2.3.3} and \eqref{eq:liminf_case2.8}, for every $\ell=1,\ldots,L$, we have that
        \begin{subequations} 
        \begin{alignat*}{3}
            v_{\varepsilon} \chi_{\Gamma_{I}^{\smash{\varepsilon',\ell}}}&\to v &&\quad \text{ in }L^2( \Gamma_{I}^{\ell})&&\quad (\varepsilon'\to 0^+)\,,\\
            u_\infty(\cdot+ \varepsilon'\widetilde{\mathtt{d}}_{\varepsilon'}^{\ell} n_{\ell}) \chi_{\Gamma_{I}^{\smash{\varepsilon',\ell}}}&\to u_\infty &&\quad \text{ in }L^2( \Gamma_{I}^{\ell})&&\quad (\varepsilon'\to 0^+)\,,
        \end{alignat*}
        \end{subequations}
        which, using Corollary \ref{lem:poincare_int}\eqref{lem:poincare_1} and that $\mathtt{d}_{\min}\leq \widetilde{\mathtt{d}}_{\varepsilon'}^{\ell}\leq \|\mathtt{d}\|_{\infty,\smash{\Gamma_{I}}}$ a.e.\ on $\Gamma_{I}^{\smash{\varepsilon',\ell}}$, for every $\ell=1,\ldots,L$, follows from
        \begin{align*}
            \|u_\infty(\cdot+ \varepsilon'\widetilde{\mathtt{d}}_{\varepsilon'}^{\ell} n_{\ell})-u_\infty\|_{\smash{\Gamma_{I}^{\smash{\varepsilon',\ell}}}}^2\leq  \tfrac{\varepsilon'\mathtt{d}_{\min}}{1-\varepsilon'\|\mathtt{d}\|_{\infty,\smash{\Gamma_{I}}}\|\widetilde{R}_{\smash{\varepsilon'}}^{\ell}\|_{\infty,\smash{\widetilde{D}_{I}^{\smash{\varepsilon',\ell}}}}}\|\nabla u_\infty\|_{\smash{\widetilde{\Sigma}_{I}^{\smash{\varepsilon',\ell}}}}\to 0\quad (\varepsilon'\to \smash{0^+})\,,
        \end{align*}
        for every $\ell=1,\ldots,L$, together with \eqref{eq:liminf_case2.9}, we deduce that
        \begin{align}
            \begin{aligned}
                 &\tfrac{\beta}{2}\|(1+\beta\widetilde{\mathtt{d}}_{\varepsilon'}^{\ell})^{-\smash{\frac{1}{2}}}\{v_{\varepsilon'}-u_{\infty}(\cdot+\varepsilon'\widetilde{\mathtt{d}}_{\varepsilon'}^{\ell} n_\ell)\}\|_{\smash{\Gamma_{I}^{\smash{\varepsilon',\ell}}}}^2
            \\&\to \tfrac{\beta}{2}\|(1+\beta\widetilde{\mathtt{d}})^{-\smash{\frac{1}{2}}}\{v_{\varepsilon'}-u_{\infty}\}\|_{\smash{\Gamma_{I}^{\ell}}}^2\quad (\varepsilon'\to \smash{0^+})\,.
            \end{aligned}\label{eq:liminf_case2.15}
        \end{align}
        Using \eqref{eq:liminf_case2.14} together with \eqref{eq:liminf_case2.15} in \eqref{eq:liminf_case2.13}, we find that\enlargethispage{1.5mm}
        \begin{align}\label{eq:liminf_case2.16}
            \liminf_{\varepsilon'\to \smash{0^+}}{\big\{\tfrac{\varepsilon'}{2}\|\nabla v_{\varepsilon'}\|_{\Sigma_{I}^{\smash{\varepsilon'}}}^2+\tfrac{\beta}{2}\|v_{\varepsilon'}-u_{\infty}\|_{\Gamma^{\varepsilon}_{I}}^2\big\}}&\ge
            \tfrac{\beta}{2}\|(1+\beta\widetilde{\mathtt{d}})^{-\smash{\frac{1}{2}}}\{v_{\varepsilon'}-u_{\infty}\}\|_{\smash{\Gamma_{I}}}^2\,.
        \end{align}  
        In summary, from \eqref{eq:liminf_case2.4}  and \eqref{eq:liminf_case2.16}, we conclude the claimed $\liminf$-estimate in the Case \hyperlink{Case 2}{2}.
    \end{proof}

    \subsection{$\limsup$-estimate}

    \hspace{5mm}In this subsection, similar to the previous subsection, we establish the stated $\limsup$-estimate in Theorem \ref{thm:main} for Case~\hyperlink{Case 1}{1} and Case \hyperlink{Case 2}{2}. To begin with, we consider Case \hyperlink{Case 1}{1}, which in~the~case~of~pure insulation (\textit{i.e.}, $\Gamma_I=\partial\Omega$) and trivial ambient temperature (\textit{i.e.}, $u_{\infty}=0$) has already been studied in \cite[Thm.\ 3.1]{PietraNitschScalaTrombetti2021}.\enlargethispage{1mm}
    
    \begin{lemma}[$\limsup$-estimate; Case \hyperlink{Case 1}{1}]\label{lem:limsup_case1}
        Let Case \hyperlink{Case 1}{1} be satisfied. Then, if   $\mathtt{d}\in  C^{0,1}(\Gamma_I)$ is such that $\mathtt{d}\ge  \mathtt{d}_{\min}$ in $\Gamma_I$, for a constant $\mathtt{d}_{\min}>0$, then for every $v\in  L^2(\mathbb{R}^d)$, there exists a recovery sequence $\smash{(v_\varepsilon)_{\varepsilon\in (0,\varepsilon_0)}}\subseteq  L^2(\mathbb{R}^d)$ such that $v_\varepsilon\to v$ in $L^2(\mathbb{R}^d)$ $(\varepsilon\to 0^{+})$~and
        \begin{align*}
            \limsup_{\smash{\varepsilon\to \smash{0^+}}}{\smash{\big\{\smash{\overline{E}}_\varepsilon^{\mathtt{d}}(v_\varepsilon)\big\}}}\leq \smash{\overline{E}}^{\mathtt{d}}(v)\,.
        \end{align*}
    \end{lemma}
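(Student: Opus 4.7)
The plan is to construct an explicit recovery sequence by affine interpolation of $v$ across the insulating layer, with the boundary value at $\Gamma_{I}^{\varepsilon}$ chosen to saturate the Young inequality used in the $\liminf$-estimate (\textit{cf}.\ \eqref{eq:liminf_case1.5}). First, we may assume $v\in H^1(\Omega)$ with $v=u_D$ a.e.\ on $\Gamma_D$, since otherwise $\overline{E}^{\mathtt{d}}(v)=+\infty$ and the estimate is trivial. By a density argument--lifting $u_D$ to an $H^1(\Omega)$-function, approximating the remainder in the closed subspace $H^1_{\Gamma_D}(\Omega)$ by smoother functions, and passing to the limit via a diagonal sequence using the continuity of $\overline{E}^{\mathtt{d}}$ on the affine space $\{v\in H^1(\Omega)\mid v=u_D\text{ on }\Gamma_D\}$--it suffices to treat $v$ with Lipschitz continuous trace on $\Gamma_I$.

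For each $\varepsilon\in(0,\varepsilon_0)$, motivated by the minimization of the combined layer and Robin energies along normal fibers, define the target value at the interacting insulation boundary
\begin{align*}
W_\varepsilon(s)\coloneqq \frac{v(s)+\beta\mathtt{d}(s)\,\tilde u_\varepsilon(s)}{1+\beta\mathtt{d}(s)}\,,\quad \tilde u_\varepsilon(s)\coloneqq u_\infty(s+\varepsilon\mathtt{d}(s)n(s))\,,
\end{align*}
and set $v_\varepsilon=v$ in $\Omega$, $v_\varepsilon=0$ outside $\Omega_I^{\varepsilon}$, and for $x=s+tn(s)\in\Sigma_I^{\varepsilon}$ (with $s\in\Gamma_I$, $t\in[0,\varepsilon\mathtt{d}(s))$),
\begin{align*}
v_\varepsilon(x)\coloneqq v(s)+\tfrac{t}{\varepsilon\mathtt{d}(s)}\bigl(W_\varepsilon(s)-v(s)\bigr)\,.
\end{align*}
The gluing is continuous across $\Gamma_I$, so $v_\varepsilon|_{\Omega_I^{\varepsilon}}\in H^1(\Omega_I^{\varepsilon})$ with $v_\varepsilon=u_D$ a.e.\ on $\Gamma_D$. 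Since $v_\varepsilon=v$ in $\Omega$, $|\Sigma_I^{\varepsilon}|=O(\varepsilon)$, and $v_\varepsilon$ is uniformly bounded in the layer, we obtain $v_\varepsilon\to v$ in $L^2(\mathbb{R}^d)$.

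It remains to compute $\limsup_{\varepsilon\to 0^+}{\overline{E}_\varepsilon^{\mathtt{d}}(v_\varepsilon)}$. The terms $\tfrac{\lambda}{2}\|\nabla v\|_{\Omega}^2$, $(f,v)_\Omega$, $\langle g,v\rangle_{\smash{H^{\smash{\frac{1}{2}}}(\Gamma_N)}}$, and $I_{\{u_D\}}^{\Gamma_D}(v)$ are unchanged. Using the parametrization $\Phi_\varepsilon$ (with $k=n$), the normal derivative of $v_\varepsilon\circ\Phi_\varepsilon$ is exactly $(W_\varepsilon-v)/(\varepsilon\mathtt{d})$ and tangential derivatives are $O(1)$; an application of Lemma~\ref{lem:approx_trans_formula} then yields
\begin{align*}
\tfrac{\varepsilon}{2}\|\nabla v_\varepsilon\|_{\Sigma_I^{\varepsilon}}^2=\tfrac{1}{2}\int_{\Gamma_I}\tfrac{|W_\varepsilon-v|^2}{\mathtt{d}}\,\mathrm{d}s+O(\varepsilon)\,,
\end{align*}
while Lemma~\ref{lem:approx_trans_formula2} and Corollary~\ref{lem:equiv} give $\tfrac{\beta}{2}\|v_\varepsilon-u_\infty\|_{\Gamma_I^{\varepsilon}}^2=\tfrac{\beta}{2}\|W_\varepsilon-\tilde u_\varepsilon\|_{\Gamma_I}^2+o(1)$. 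Since $\tilde u_\varepsilon\to u_\infty|_{\Gamma_I}$ in $L^2(\Gamma_I)$ (by Corollary~\ref{lem:poincare_int}\eqref{lem:poincare_1}, as in \eqref{eq:liminf_case1.6}) and $W_\varepsilon\to W\coloneqq(v+\beta\mathtt{d} u_\infty)/(1+\beta\mathtt{d})$ in $L^2(\Gamma_I)$, the identities $W-v=\beta\mathtt{d}(u_\infty-v)/(1+\beta\mathtt{d})$ and $W-u_\infty=(v-u_\infty)/(1+\beta\mathtt{d})$ yield
\begin{align*}
\tfrac{1}{2}\tfrac{|W-v|^2}{\mathtt{d}}+\tfrac{\beta}{2}|W-u_\infty|^2=\tfrac{\beta}{2}\tfrac{|v-u_\infty|^2}{1+\beta\mathtt{d}}\quad\text{ a.e.\ on }\Gamma_I\,,
\end{align*}
which matches the $\Gamma$-limit boundary term in \eqref{eq:Eh} (recalling $k\cdot n=1$ in Case~\hyperlink{Case 1}{1}).

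The main obstacle is not the asymptotic computation--which is essentially symbolic once the minimizer $W_\varepsilon$ is identified--but rather the density reduction step, since the Dirichlet constraint on $\Gamma_D$ must be preserved throughout approximation, and the trace $v|_{\Gamma_I}$ must be smoothed without disturbing the values on $\overline{\Gamma}_D\cap\overline{\Gamma}_I$. This is handled via a lift-and-approximate argument in $H^1_{\Gamma_D}(\Omega)$ combined with a standard $\Gamma$-limsup diagonal sequence extraction.
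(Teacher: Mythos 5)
Your ansatz identifies exactly the right fiber-wise boundary value: the target $W=(v+\beta\mathtt{d}\,u_\infty)/(1+\beta\mathtt{d})$ coincides with the value the paper's recovery sequence takes on $\Gamma_I^{\varepsilon}$ (there, $v_\varepsilon-u_\infty=(1+\beta\mathtt{d})^{-1}\{\overline v-u_\infty\}$, \textit{cf}.\ \eqref{eq:limsup_case1.6.3}), and your identity $\tfrac12\mathtt{d}^{-1}|W-v|^2+\tfrac{\beta}{2}|W-u_\infty|^2=\tfrac{\beta}{2}(1+\beta\mathtt{d})^{-1}|v-u_\infty|^2$ is the correct saturation of the Young step in \eqref{eq:liminf_case1.5}. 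The construction itself, however, differs from the paper's and has a genuine gap. The paper does not interpolate fiber-wise between boundary traces; it sets $v_\varepsilon=\overline v\varphi_\varepsilon+u_\infty(1-\varphi_\varepsilon)$ with the cut-off $\varphi_\varepsilon=1-\beta\,\mathrm{dist}(\cdot,\partial\Omega)/(\varepsilon(1+\beta\mathtt{d}))$ of \eqref{eq:limsup_case1.1}, so that only $\nabla\varphi_\varepsilon$, $\nabla\overline v$, and $\nabla u_\infty$ --- all volumetric $L^2$ objects --- enter the gradient, and the singular term $\varepsilon^{-1}\|\beta(1+\beta\mathtt{d})^{-1}\{\overline v-u_\infty\}\|_{\Sigma_I^{\varepsilon}}^2$ is converted into a boundary integral by the Lebesgue differentiation Lemma~\ref{lem:Lebesgue_boundary_limit}. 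No smoothing of $v$ and no diagonal argument are needed.

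The gap in your version is this: the tangential gradient of your $v_\varepsilon$ on $\Sigma_I^{\varepsilon}$ contains $\nabla_s\tilde u_\varepsilon$, the tangential gradient of the trace of $u_\infty$ on the surface $s\mapsto s+\varepsilon\mathtt{d}(s)n(s)$. For $u_\infty\in H^1(\mathbb{R}^d\setminus\overline\Omega)$ this trace is only in $H^{1/2}$, so its tangential gradient is not in $L^2(\Gamma_I)$ and is not even defined for every $\varepsilon$; hence the claim that tangential derivatives are $O(1)$ fails. Your density reduction smooths $v$, but $u_\infty$ is a fixed datum of $\smash{\overline E}_\varepsilon^{\mathtt{d}}$ and cannot be replaced inside the diagonal argument without separately controlling $\|u_\infty^\delta-u_\infty\|_{\Gamma_I^{\varepsilon}}$ uniformly in $\varepsilon$; this is fixable (mollify $u_\infty$ and invoke Corollary~\ref{lem:poincare_int}, or simply use the volumetric $u_\infty$ as the paper does), but it is a missing step, not a formality. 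Two smaller points: setting $v_\varepsilon=0$ outside $\Omega_I^{\varepsilon}$ destroys $v_\varepsilon\to v$ in $L^2(\mathbb{R}^d)$ unless $v$ happens to vanish there --- you must keep $v_\varepsilon=v$ outside the layer, as in \eqref{eq:limsup_case1.6.1}; and the density step itself (smoothing $v|_{\Gamma_I}$ while preserving $v=u_D$ on $\Gamma_D$ near $\overline\Gamma_D\cap\overline\Gamma_I$, together with continuity of $\smash{\overline E}^{\mathtt{d}}$ along the approximation) is asserted rather than carried out, whereas the paper's construction applies directly to every admissible $v$.
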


    \begin{proof}
         Let $v\in L^2(\mathbb{R}^d)$ be fixed, but arbitrary. Without loss of generality,~we~may~assume~that $v|_{\Omega}\in H^1(\Omega)$ with $v=u_D$ a.e.\ on $\Gamma_D$. Otherwise, we can~choose~${v_\varepsilon=v\in L^2(\mathbb{R}^d)}$ for~all~${\varepsilon\in (0,\varepsilon_0)}$, which satisfies $\limsup_{\varepsilon\to \smash{0^+}}{\{\smash{\overline{E}}_{\varepsilon}^{\mathtt{d}}(v_{\varepsilon})\}}=+\infty
=
    \smash{\overline{E}}^{\mathtt{d}}(v)$. As a consequence, there exists an extension $\overline{v}\in H^1(\mathbb{R}^d)$ of the restriction $v|_{\Omega}\in H^1(\Omega)$, \textit{i.e.},  we have that $\overline{v}|_{\Omega}=v|_{\Omega}$ a.e.\ in $\Omega$. Next,~let~$\varepsilon\in (0,\varepsilon_0)$ be fixed, but arbitrary. In order to construct the desired recovery sequence, we modify the extension $\overline{v}\in H^1(\mathbb{R}^d)$ by means of  the cut-off function $\varphi_\varepsilon\colon \mathbb{R}^d\to [0,1]$, for every $x\in \mathbb{R}^d$~defined~by
        \begin{align}\label{eq:limsup_case1.1}
            \varphi_\varepsilon(x)\coloneqq\begin{cases}
                1-\smash{\frac{\beta \textup{dist}(x,\partial\Omega)}{\varepsilon(1+\beta \mathtt{d}(x))}}&\text{ if }x\in \Sigma^{\varepsilon}_{I}\,,\\
                1&\text{ if }x\in \overline{\Omega}\,,\\
                0&\text{ else}\,,
            \end{cases}
        \end{align}
        where $\mathtt{d}\colon\hspace{-0.1em} \Sigma^{\varepsilon_0}_{I}\hspace{-0.1em}\to\hspace{-0.1em}  (0,+\infty)$ is a not relabelled extension of 
        $\mathtt{d}\in  C^{0,1}(\Gamma_I)$, for every ${x\hspace{-0.1em}=\hspace{-0.1em} s+t n(s)\hspace{-0.1em}\in  \hspace{-0.1em}\Sigma^{\varepsilon_0}_{I}}$, where $s\hspace{-0.1em}\in\hspace{-0.1em} \Gamma_I$ and $t\hspace{-0.1em}\in \hspace{-0.1em}[0,\varepsilon_0 \mathtt{d}(s))$, defined by $\mathtt{d}(x)\hspace{-0.1em}\coloneqq\hspace{-0.1em} \mathtt{d}(s)$, which, in turn, also satisfies~${\mathtt{d}\hspace{-0.1em}\in\hspace{-0.1em} C^{0,1}(\Sigma^{\varepsilon_0}_{I})}$. 
        By construction, 
        the cut-off function  \eqref{eq:limsup_case1.1}
        satisfies $\varphi_\varepsilon|_{\Omega^{\varepsilon}_{I}}\in H^{1,\infty}(\Omega^{\varepsilon}_{I})$ with
        \begin{subequations}\label{eq:limsup_case1.2}
        \begin{alignat}{2}\label{eq:limsup_case1.2.1}
            0\leq \varphi_\varepsilon&\leq 1 &&\quad\text{ in }\mathbb{R}^d\,,\\
            \varphi_\varepsilon&=1&&\quad\text{ in }\overline{\Omega}\,,\label{eq:limsup_case1.2.2}\\
       \smash{\varphi_{\varepsilon}} &= \smash{\tfrac{1}{1+\beta \mathtt{d}}}&&\quad\text{ on }\Gamma_{I}^{\varepsilon}\,.\label{eq:limsup_case1.2.3}
        \end{alignat}
        \end{subequations}
        Moreover, using that $\nabla \textup{dist}(\cdot,\partial\Omega)=n\circ \pi_{\partial\Omega}$  in $\mathbb{R}^d\setminus (\mathrm{Med}(\partial\Omega)\cup\overline{\Omega})$ (\textit{cf}.\ \eqref{eq:grad_dist}) and $\vert \mathrm{Med}(\partial\Omega)\vert=0$,  we have that
        \begin{align}\label{eq:limsup_case1.3}
          \left. \begin{aligned}
                \nabla \varphi_{\varepsilon}&=-\tfrac{\beta}{\varepsilon(1+\beta\mathtt{d})^2}\{(1+\beta\mathtt{d})n\circ \pi_{\partial\Omega}-\textup{dist}(\cdot,\partial\Omega)\beta\nabla \mathtt{d}\}
                \\[-1mm]&=-\tfrac{\beta}{\varepsilon(1+\beta\mathtt{d})}n\circ\pi_{\partial\Omega}+\tfrac{\beta^2\textup{dist}(\cdot,\partial\Omega)}{\varepsilon(1+\beta\mathtt{d})^2}\nabla \mathtt{d}
           \end{aligned}\quad\right\} \quad\text{ a.e.\ in }\Sigma_{I}^{\varepsilon}\,,
        \end{align}
        so that, due to $\smash{\textup{dist}(\cdot,\partial\Omega)\leq \varepsilon\|\mathtt{d}\|_{\infty,\Gamma_I}}$ in $\smash{\Sigma_{I}^{\varepsilon}}$,
        \begin{align*} 
              \smash{ \vert \nabla \varphi_{\varepsilon}\vert\leq\tfrac{\beta}{\varepsilon(1+\beta\mathtt{d})}+\tfrac{\beta^2\|\mathtt{d}\|_{\infty,\Gamma_{I}}}{(1+\beta \mathtt{d}_{\min})^2}\|\nabla \mathtt{d}\|_{\infty,\Gamma_I}
           \quad\text{ a.e.\ in }\Sigma_{I}^{\varepsilon}\,,}
        \end{align*}
        and, thus, by the convexity of the function $(t\mapsto t^2)\colon \mathbb{R}\to \mathbb{R}$, for fixed, but arbitrary $\delta\in (0,1)$,
        \begin{align}\label{eq:limsup_case1.4}
               \smash{\vert \nabla \varphi_{\varepsilon}\vert^2\leq\tfrac{1}{\delta}\tfrac{\beta^2}{\varepsilon^2(1+\beta\mathtt{d})^2}+\tfrac{1}{1-\delta}\beta^4\|\mathtt{d}\|_{\infty,\Gamma_{I}}^2\|\nabla \mathtt{d}\|_{\infty,\Gamma_I}^2
           \quad\text{ a.e.\ in }\Sigma_{I}^{\varepsilon}\,.}
        \end{align}
       Then, let the  desired recovery sequence 
        $v_\varepsilon\in L^2(\mathbb{R}^d)$,  for a.e.\ $x\in \mathbb{R}^d$,  be defined by
        \begin{align*}
             v_\varepsilon(x)
             \coloneqq\begin{cases}
                \overline{v}(x)\varphi_\varepsilon(x)+u_{\infty}(x) (1-\varphi_\varepsilon(x))&\text{ if }x\in \Omega^{\varepsilon}_{I}\,,\\
                v(x)&\text{ else}\,,
            \end{cases}
        \end{align*}
        which, by construction and $\varphi_\varepsilon|_{\Omega^{\varepsilon}_{I}}\in H^{1,\infty}(\Omega^{\varepsilon}_{I})$ with \eqref{eq:limsup_case1.2.2},\eqref{eq:limsup_case1.2.3}, satisfies $v_\varepsilon|_{\Omega^{\varepsilon}_{I}}\in H^1(\Omega^{\varepsilon}_{I})$~with\vspace{-4.5mm}
        \begin{subequations}\label{eq:limsup_case1.6}
        \begin{alignat}{2}  
            v_\varepsilon&=v&&\quad\text{ a.e.\ in }\mathbb{R}^d\setminus \Sigma_{I}^{\varepsilon} \,,\label{eq:limsup_case1.6.1}\\ 
            v_\varepsilon&=u_D&&\quad\text{ a.e.\ on }\smash{\Gamma_D} \,,\label{eq:limsup_case1.6.2}\\
             v_\varepsilon-u_{\infty}&=\smash{\tfrac{1}{1+\beta\mathtt{d}}}\{\overline{v}-u_{\infty}\}&&\quad\text{ a.e.\ on }\smash{\Gamma_{I}^{\varepsilon} }\,.\label{eq:limsup_case1.6.3}
        \end{alignat}
        \end{subequations}
        Moreover,  using \eqref{eq:limsup_case1.4} and  the convexity of $(t\mapsto t^2)\colon \mathbb{R}\to \mathbb{R}$, for fixed, but arbitrary $\delta\in (0,1)$, we have that
        \begin{align}\label{eq:limsup_case1.7}
        \left.
        \begin{aligned} 
            \vert \nabla v_{\varepsilon}\vert^2&\leq \tfrac{1}{\delta}\vert \nabla \varphi_{\varepsilon}(\overline{v}-u_{\infty})\vert^2+\tfrac{1}{1-\delta}\vert \varphi_{\varepsilon}\nabla \overline{v}+(1-\varphi_{\varepsilon})\nabla u_{\infty}\vert^2
            \\&\leq \tfrac{1}{\delta}\big\{\tfrac{1}{\delta}\tfrac{\beta^2}{\varepsilon^2(1+\beta\mathtt{d})^2}+\tfrac{1}{1-\delta}\beta^4\|\mathtt{d}\|_{\infty,\Gamma_{I}}^2\|\nabla \mathtt{d}\|^2_{\infty,\Gamma_I}\big\}\vert\overline{v}-u_{\infty}\vert^2\\&\quad+\tfrac{1}{1-\delta}\{\vert\nabla \overline{v}\vert+\vert\nabla u_{\infty}\vert\}^2
            \end{aligned}\quad\right\}\quad\text{ a.e.\ in }\Sigma_{I}^{\varepsilon}\,.
        \end{align}
        In particular, due to \eqref{eq:limsup_case1.6.1}, $\vert \Sigma_{I}^{\varepsilon}\vert\to 0$ $(\varepsilon\to 0^+)$, and $\vert v_{\varepsilon}\vert \leq \vert v\vert +\vert u_{\infty}\vert$ a.e.\ in $\mathbb{R}^d$ (due to \eqref{eq:limsup_case1.2.1}), Lebesgue's dominated convergence theorem yields that 
        \begin{align*}
            v_\varepsilon\to v\quad \text{ in }L^2(\mathbb{R}^d)\quad (\varepsilon\to\smash{0^+})\,.
        \end{align*}
        In addition, as a direct consequence of \eqref{eq:limsup_case1.6.1},\eqref{eq:limsup_case1.6.2}, we obtain
        \begin{align}\label{eq:limsup_case1.8}
             \smash{\overline{E}}^{\mathtt{d}}_\varepsilon(v_\varepsilon)= \tfrac{\lambda}{2}\|\nabla v\|_{\Omega}^2-(f,v)_{\Omega}-\langle g,v\rangle_{\smash{H^{\smash{\frac{1}{2}}}(\Gamma_N)}}+\tfrac{\varepsilon}{2}\|\nabla v_{\varepsilon}\|_{\smash{\Sigma^{\varepsilon}_{I}}}^2+\tfrac{\beta}{2}\|v_\varepsilon-u_{\infty}\|_{\Gamma^{\varepsilon}_{I}}^2\,,
        \end{align}
        so that it is left to treat the limit superior of the last two terms on the right-hand side~of~\eqref{eq:limsup_case1.8}. For the latter, it is sufficient establish that  
        \begin{subequations}\label{eq:limsup_case1.9}
        \begin{align}\label{eq:limsup_case1.9.1}
             \limsup_{\varepsilon\to \smash{\smash{0^+}}}{\big\{\tfrac{\varepsilon}{2}\|\nabla v_{\varepsilon}\|_{\smash{\Sigma^{\varepsilon}_{I}}}^2\big\}}&\leq \tfrac{\beta}{2}\|(\beta \mathtt{d})^{\smash{\frac{1}{2}}}(1+\beta \mathtt{d})^{-1}\{v-u_{\infty}\}\|_{\Gamma_{I}}^2\,,\\
             \limsup_{\varepsilon\to \smash{0^+}}{\big\{\tfrac{\beta}{2}\|v_{\varepsilon}-u_{\infty}\|_{\Gamma^{\varepsilon}_{I}}^2\big\}}&\leq \tfrac{\beta}{2}\|(1+\beta \mathtt{d})^{-1}\{v-u_{\infty}\}\|_{\Gamma_{I}}^2\,,\label{eq:limsup_case1.9.2}
        \end{align}
        \end{subequations} 
        which jointly imply that
        \begin{align}\label{eq:limsup_case1.10}
             \limsup_{\varepsilon\to \smash{0^+}}{\big\{\tfrac{\varepsilon}{2}\|\nabla v_{\varepsilon}\|_{\smash{\Sigma^{\varepsilon}_{I}}}^2+\tfrac{\beta}{2}\|v_{\varepsilon}-u_{\infty}\|_{\Gamma^{\varepsilon}_{I}}^2\big\}}\leq \tfrac{\beta}{2}\|(1+\beta \mathtt{d})^{-\smash{\frac{1}{2}}}\{v-u_{\infty}\}\|_{\Gamma_{I}}^2\,.
        \end{align}

        Therefore, let us next establish the $\limsup$-estimates \eqref{eq:limsup_case1.9.1} and \eqref{eq:limsup_case1.9.2} separately:

        \emph{ad \eqref{eq:limsup_case1.9.1}.} Resorting to \eqref{eq:limsup_case1.7}, 
            Lemma \ref{lem:Lebesgue_boundary_limit}\eqref{lem:Lebesgue_boundary_limit.0} (with $k=n$ and, thus, $\widetilde{\mathtt{d}}=\mathtt{d}$), and \eqref{eq:limsup_case1.6.1}, because $\delta\in (0,1)$ was chosen arbitrarily,  we find that
            \begin{align*}
            \begin{aligned} 
             \limsup_{\varepsilon\to \smash{0^+}}{\big\{\tfrac{\varepsilon}{2}\|\nabla v_{\varepsilon}\|_{\smash{\Sigma^{\varepsilon}_{I}}}^2\big\}}&\smash{\overset{\eqref{eq:limsup_case1.7}}{\leq}}  \limsup_{\varepsilon\to \smash{0^+}}{\big\{\tfrac{1}{2\varepsilon}\tfrac{1}{\delta^2}\|\beta(1+\beta\mathtt{d})^{-1}\{\overline{v}-u_{\infty}\}\|_{\smash{\Sigma^{\varepsilon}_{I}}}^2\big\}}\\&\quad\hspace*{1.5mm}+
             \limsup_{\varepsilon\to \smash{0^+}}{\big\{\tfrac{\varepsilon}{2}\tfrac{1}{\delta(1-\delta)}\beta^4\|\mathtt{d}\|_{\infty,\Gamma_{I}}^2\|\nabla \mathtt{d}\|_{\infty,\Gamma_{I}}^2\|\overline{v}-u_{\infty}\|_{\smash{\Sigma^{\varepsilon}_{I}}}^2\big\}}
             \\&\quad\hspace*{1.5mm}+\limsup_{\varepsilon\to \smash{0^+}}{\big\{\tfrac{\varepsilon}{2}\tfrac{1}{1-\delta}\{\|\nabla \overline{v} \|_{\smash{\Sigma^{\varepsilon}_{I}}}+\|\nabla u_\infty\|_{\smash{\Sigma^{\varepsilon}_{I}}}\}^2\big\}} 
             \\&\hspace*{1.5mm}\leq \limsup_{\varepsilon\to \smash{0^+}}{\big\{\tfrac{1}{\delta^2}\tfrac{1}{2\varepsilon}\|\beta(1+\beta \mathtt{d})^{-1}\{ \overline{v}-u_{\infty}\}\|_{\smash{\Sigma^{\varepsilon}_{I}}}^2\big\}}
             \\&\hspace*{1.25mm}\smash{\overset{\eqref{lem:Lebesgue_boundary_limit.0}}{=}}\tfrac{1}{\delta^2}\tfrac{\beta}{2}\|(\beta\mathtt{d})^{\smash{\frac{1}{2}}}(1+\beta \mathtt{d})^{-1}\{v-u_{\infty}\}\|_{\Gamma_{I}}^2
             \\&\hspace*{1.5mm}\to \tfrac{\beta}{2}\|(\beta \mathtt{d})^{\smash{\frac{1}{2}}}(1+\beta \mathtt{d})^{-1}\{v-u_{\infty}\}\|_{\Gamma_{I}}^2\quad (\delta \to 1^-)\,.
             \end{aligned} 
        \end{align*} 

        \emph{ad \eqref{eq:limsup_case1.9.2}.} Using \eqref{eq:limsup_case1.6.3}, the approximative transformation formula (\textit{cf}.\ Lemma \ref{lem:approx_trans_formula2}),~and~that
         \begin{align*}
             \{\overline{v}-u_{\infty}\}(\cdot+\varepsilon \mathtt{d} n)\to \overline{v}-u_{\infty}=v-u_{\infty}\quad \text{ in }L^2(\Gamma_I)\quad (\varepsilon\to \smash{0^+})\,,
         \end{align*}
         which, similar to \eqref{eq:liminf_case1.6}, using Corollary \ref{lem:poincare_int}\eqref{lem:poincare_1}, follows from
        \begin{align*}
            \|\{\overline{v}-u_{\infty}\}(\cdot+\varepsilon \mathtt{d} n)-\{v-u_{\infty}\}\|_{\smash{\Gamma_{I}}}^2\leq  \tfrac{\varepsilon\mathtt{d}_{\min}}{1-\varepsilon\|\mathtt{d}\|_{\infty,\smash{\Gamma_{I}}}\|R_{\varepsilon}\|_{\infty,\smash{D_{I}^{\varepsilon}}}}\|\nabla\{ \overline{v}-u_{\infty}\}\|_{\smash{\Sigma_{I}^{\varepsilon}}}^2\to 0\quad (\varepsilon\to \smash{0^+})\,,
        \end{align*}
        we find that
        \begin{align*}
            \begin{aligned} 
            \lim_{\varepsilon\to \smash{0^+}}{\big\{\tfrac{\beta}{2}\|v_{\varepsilon}-u_{\infty}\|_{\Gamma^{\varepsilon}_{I}}^2\big\}}&= \lim_{\varepsilon\to \smash{0^+}}{\big\{\tfrac{\beta}{2}\|(1+\beta \mathtt{d})^{-1}\{\overline{v}-u_{\infty}\}(\cdot+\varepsilon \mathtt{d} n)\}\|_{\Gamma_{I}}^2\big\}}
            \\&=\tfrac{\beta}{2}\|(1+\beta \mathtt{d})^{-1}\{v-u_{\infty}\}\|_{\Gamma_{I}}^2\,.
            \end{aligned}
        \end{align*} 
        In summary, from \eqref{eq:limsup_case1.9.1} and \eqref{eq:limsup_case1.9.2}, it follows  \eqref{eq:limsup_case1.10}, which together with \eqref{eq:limsup_case1.8} confirms the claimed $\limsup$-estimate for the Case \hyperlink{Case 1}{1}. 
    \end{proof}

    Next, let us consider Case \hyperlink{Case 2}{2}.

    \begin{lemma}[$\limsup$-estimate; Case \hyperlink{Case 2}{2}]\label{lem:limsup_case2}
        Let Case \hyperlink{Case 2}{2} be satisfied. Then, if   $\mathtt{d}\in  C^{0,1}(\Gamma_I)$ is such~that $\mathtt{d}\ge  \mathtt{d}_{\min}$, for a constant  $\mathtt{d}_{\min}>0$, then for every $v\in  L^2(\mathbb{R}^d)$, there exists a recovery sequence $\smash{(v_\varepsilon)_{\varepsilon\in (0,\varepsilon_0)}}\subseteq L^2(\mathbb{R}^d)$ such that $v_\varepsilon\to v$ in $L^2(\mathbb{R}^d)$ $(\varepsilon\to 0^{+})$ and
        \begin{align*}
            \limsup_{\smash{\varepsilon\to \smash{0^+}}}{\smash{\big\{\smash{\overline{E}}_\varepsilon^{\mathtt{d}}(v_\varepsilon)\big\}}}\leq \smash{\overline{E}}^{\mathtt{d}}(v)\,.
        \end{align*}
    \end{lemma}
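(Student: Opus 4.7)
The plan is to adapt the cut-off construction from Lemma~\ref{lem:limsup_case1} to the piece-wise flat setting by substituting the unsigned distance function with a suitably smoothed version of the \emph{signed} distance function, together with a mollification of the discontinuous outward unit normal field $n \colon \Gamma_I \to \mathbb{S}^{d-1}$ across the edges $\bigcup_\ell \partial \Gamma_I^\ell$, as anticipated in the introduction.

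First, I would reduce to the nontrivial case $v|_\Omega \in H^1(\Omega)$ with $v = u_D$ a.e.\ on $\Gamma_D$ (otherwise $\overline{E}^\mathtt{d}(v) = +\infty$ and the constant sequence $v_\varepsilon \equiv v$ works) and fix an extension $\overline{v} \in H^1(\mathbb{R}^d)$. Abbreviating $\tilde{\mathtt{d}} \coloneqq (k \cdot n) \mathtt{d} \in L^\infty(\Gamma_I)$, I exploit that in each local tubular neighborhood $\mathcal{N}_\delta(\Gamma_I^\ell)$, the signed distance function $\widehat{\mathrm{dist}}(\cdot, \partial\Omega)$ is affine with $\nabla \widehat{\mathrm{dist}} = n_\ell$, hence invariant under mollification away from the edges $\partial \Gamma_I^\ell$. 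Using this invariance and a partition of unity, I would construct a globally Lipschitz smoothed signed distance $\widehat{d}_\varepsilon$ and a continuous smoothed normal $n_\varepsilon \in (C^0)^d$ (with associated smoothed distribution $\tilde{\mathtt{d}}_\varepsilon \coloneqq (k \cdot n_\varepsilon) \mathtt{d}$) which coincide with $\widehat{\mathrm{dist}}(\cdot, \partial\Omega)$, $n_\ell$, and $\tilde{\mathtt{d}}$ respectively on each $\Gamma_I^\ell$ outside a $\delta(\varepsilon)$-tube around the edges, with the mollification scale tuned to satisfy $\varepsilon \ll \delta(\varepsilon) \to 0^+$.

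The cut-off function would then be defined, in analogy to \eqref{eq:limsup_case1.1}, as
\begin{align*}
\varphi_\varepsilon(x) \coloneqq \begin{cases}
\max\!\left\{0,\, 1 - \dfrac{\beta\,\widehat{d}_\varepsilon(x)}{\varepsilon\,(1 + \beta\,\tilde{\mathtt{d}}_\varepsilon(x))}\right\} & \text{if } x \in \Sigma_I^\varepsilon, \\[1mm]
1 & \text{if } x \in \overline{\Omega}, \\
0 & \text{else,}
\end{cases}
\end{align*}
and the recovery sequence would be set to $v_\varepsilon \coloneqq \overline{v}\,\varphi_\varepsilon + u_\infty(1-\varphi_\varepsilon)$ on $\Omega_I^\varepsilon$ and $v_\varepsilon \coloneqq v$ elsewhere. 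By construction $\varphi_\varepsilon = 1$ on $\overline{\Omega}$, so $v_\varepsilon = u_D$ on $\Gamma_D$ and $v_\varepsilon|_\Omega = v|_\Omega$, preserving the volume, Dirichlet, and Neumann contributions to $\overline{E}$. On each local interacting boundary part $\widetilde{\Gamma}_I^{\varepsilon,\ell}$ away from the edges, $\widehat{d}_\varepsilon(s + \varepsilon \mathtt{d}(s)k(s)) = \varepsilon \tilde{\mathtt{d}}(s)$ and $\tilde{\mathtt{d}}_\varepsilon = \tilde{\mathtt{d}}$, yielding $\varphi_\varepsilon = (1+\beta\tilde{\mathtt{d}})^{-1}$, exactly what is needed to reproduce the trace of the limit functional \eqref{eq:Eh}. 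The two key estimates then parallel Case~\hyperlink{Case 1}{1}: a bulk Dirichlet estimate via the analogue of \eqref{eq:limsup_case1.4} combined with Lemma~\ref{lem:Lebesgue_boundary_limit} (with $k$-transversality converting the $\mathrm{d}x$-integral into a boundary integral weighted by $\tilde{\mathtt{d}}$) and a Young splitting $\delta \to 1^-$, yielding
\begin{align*}
\limsup_{\varepsilon \to 0^+}\tfrac{\varepsilon}{2}\|\nabla v_\varepsilon\|_{\Sigma_I^\varepsilon}^2 \leq \tfrac{\beta}{2}\|(\beta\tilde{\mathtt{d}})^{1/2}(1+\beta\tilde{\mathtt{d}})^{-1}\{v-u_\infty\}\|_{\Gamma_I}^2,
\end{align*}
and a Robin boundary estimate via Lemma~\ref{lem:approx_trans_formula2}, yielding $\lim_{\varepsilon \to 0^+}\tfrac{\beta}{2}\|v_\varepsilon - u_\infty\|_{\Gamma_I^\varepsilon}^2 = \tfrac{\beta}{2}\|(1+\beta\tilde{\mathtt{d}})^{-1}\{v-u_\infty\}\|_{\Gamma_I}^2$; summing gives exactly $\tfrac{\beta}{2}\|(1+\beta\tilde{\mathtt{d}})^{-1/2}\{v-u_\infty\}\|_{\Gamma_I}^2$.

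The main obstacle is controlling the error contribution from the edge region $\Sigma_I^\varepsilon \cap \bigcup_\ell \mathcal{N}_{\delta(\varepsilon)}(\partial \Gamma_I^\ell)$, where the mollification introduces gradients of order $\delta(\varepsilon)^{-1}$ in $n_\varepsilon$ (hence in $\tilde{\mathtt{d}}_\varepsilon$) and $\widehat{d}_\varepsilon$ is no longer piece-wise affine, so that $\varphi_\varepsilon$ is not of the explicit form used on the flat parts. The $d$-dimensional Lebesgue measure of this region is of order $\varepsilon \cdot \delta(\varepsilon) \cdot \mathcal{H}^{d-2}(\partial \Gamma_I)$. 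Bounding the weighted Dirichlet integral on this set in terms of $\|\overline{v} - u_\infty\|_{L^q(\mathbb{R}^d)}$ via Hölder and using Sobolev embedding $H^1(\mathbb{R}^d) \hookrightarrow L^q(\mathbb{R}^d)$ requires the Sobolev exponent $q$ to be large enough to absorb the geometric factor $\varepsilon \delta^{d-2}$; the balance closes precisely when $d \leq 4$, by choosing $\delta(\varepsilon) = \varepsilon^\alpha$ with $\alpha \in (0,1)$ tuned accordingly. This elaborate smoothing and edge analysis is the delicate novelty beyond the techniques of \cite{AKK2025_modelling} promised in the introduction, and is the step that genuinely goes beyond the straightforward transcription of Lemma~\ref{lem:limsup_case1}.
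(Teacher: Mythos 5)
Your overall strategy coincides with the paper's: mollify the signed distance function, take its gradient as a smoothed normal $n_\varepsilon$, build a Lipschitz cut-off whose trace on the good part of $\Gamma_I^{\varepsilon}$ equals $(1+\beta\widetilde{\mathtt{d}})^{-1}$, and control the edge region by combining its small measure with a Sobolev embedding --- which is exactly where the restriction $d\leq 4$ enters. A cosmetic difference: the paper's cut-off \eqref{eq:limsup_case2.5} is built from the transversal distance function $\psi_\varepsilon$ of Lemma \ref{lem:transversal_distance_function} rather than from the smoothed signed distance itself; both yield the correct trace on $\Gamma_I^{\varepsilon}$ away from the edges and gradients of size $O(\varepsilon^{-1})$, so this choice is not essential. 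Your remaining steps (Lebesgue differentiation via Lemma \ref{lem:Lebesgue_boundary_limit}, the transformation formula of Lemma \ref{lem:approx_trans_formula2} for the Robin term, and the convexity splitting with parameter tending to $1^-$) then mirror Case 1.

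There is, however, a genuine gap in your edge estimate: the scaling $\delta(\varepsilon)=\varepsilon^{\alpha}$ with $\alpha\in(0,1)$, i.e.\ $\varepsilon\ll\delta(\varepsilon)$, does not close at the borderline dimension $d=4$. With your scaling the edge region $E_\varepsilon$ has measure of order $\varepsilon^{1+\alpha}$, and the dominant contribution to the scaled Dirichlet energy there is, via H\"older and $H^1(\mathbb{R}^d)\hookrightarrow L^{\smash{\frac{2d}{d-2}}}(\mathbb{R}^d)$,
\begin{align*}
\tfrac{1}{\varepsilon}\|\overline v-u_\infty\|_{E_\varepsilon}^2\leq \varepsilon^{\frac{2(1+\alpha)}{d}-1}\,\|\overline v-u_\infty\|_{\frac{2d}{d-2},E_\varepsilon}^2\,,
\end{align*}
whose exponent equals $(\alpha-1)/2<0$ for $d=4$ and any $\alpha<1$; since the $L^4$-norm over $E_\varepsilon$ need not decay at any prescribed rate for a generic $H^1$-extension, the product is not controlled. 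The paper avoids this by mollifying at scale exactly $\varepsilon$, so that $n_\varepsilon=n_\ell$ already holds on $\smash{\widetilde{\Sigma}_I^{\varepsilon,\ell}}$ whenever $\smash{\widetilde{\Sigma}_I^{\varepsilon,\ell}}+B_\varepsilon^d(0)\subseteq\mathcal{N}_\delta(\Gamma_I^{\ell})$; this makes the excluded region of measure $\leq c_\Omega\varepsilon^2$ (cf.\ \eqref{eq:limsup_case2.11}), the resulting exponent $\tfrac{4}{d}-1$ vanishes at $d=4$, and the borderline case is then saved by the absolute continuity $\|\overline v-u_\infty\|_{4,E_\varepsilon}\to0$ as $\vert E_\varepsilon\vert\to0$. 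The fix is simply to take $\delta(\varepsilon)=C\varepsilon$: the requirement $\varepsilon\ll\delta(\varepsilon)$ is not needed for the mollified normal to be well defined and locally constant on the good set, and it is precisely what breaks the case $d=4$.
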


    \begin{proof}
         Let \hspace{-0.1mm}$v\hspace{-0.15em}\in \hspace{-0.15em} L^2(\mathbb{R}^d)$ \hspace{-0.1mm}be \hspace{-0.1mm}fixed, \hspace{-0.1mm}but \hspace{-0.1mm}arbitrary. \hspace{-0.1mm}Again, \hspace{-0.1mm}without \hspace{-0.1mm}loss \hspace{-0.1mm}of \hspace{-0.1mm}generality, \hspace{-0.1mm}we~\hspace{-0.1mm}may~\hspace{-0.1mm}\mbox{assume}~\hspace{-0.1mm}that $v|_{\Omega}\hspace{-0.1em}\in\hspace{-0.1em} H^1(\Omega)$ with $v\hspace{-0.1em}=\hspace{-0.1em}u_D$ a.e.\ on $\Gamma_D$, so that 
          there exists an extension $\overline{v}\hspace{-0.1em}\in \hspace{-0.1em}H^1(\mathbb{R}^d)$~of~the~\mbox{restriction} $v|_{\Omega}\in H^1(\Omega)$, \textit{i.e.}, we have that $\overline{v}|_{\Omega}=v|_{\Omega}$ a.e.\ in $\Omega$. Next,~let~$\varepsilon\in (0,\varepsilon_0)$ be fixed, but arbitrary.\linebreak The construction of the desired recovery sequence, again, relies on the construction of an appropriate cut-off function $\varphi_{\varepsilon}\colon\hspace{-0.1em} \mathbb{R}^d\hspace{-0.1em}\to\hspace{-0.1em} [0,1]$, which, in this case, is more delicate than in Case \hyperlink{Case 1}{1} and requires the smooth approximation of the piece-wise constant outward~unit~\mbox{normal}~\mbox{vector}~field~${n\colon\hspace{-0.15em} \Gamma_I\hspace{-0.15em}\to\hspace{-0.15em} \mathbb{S}^{d-1}}$.\linebreak As the latter is not defined in all of $\mathbb{R}^d$, motivated by 
          $\nabla \smash{\widehat{\mathrm{dist}}}(\cdot,\partial\Omega)=n\circ \pi_{\partial\Omega}$ in $\mathbb{R}^d\setminus (\mathrm{Med}(\partial\Omega)\cup \partial\Omega)$ (\textit{cf}.~\eqref{eq:grad_dist_hat}), we construct a smooth approximation by taking the gradient of the  mollified signed distance function \eqref{def:dist_hat}.\enlargethispage{2.5mm}
          
          More precisely, let the \emph{mollified outward unit normal vector field} $n_\varepsilon\colon \hspace{-0.1em}\mathbb{R}^d\hspace{-0.1em}\to\hspace{-0.1em} \mathbb{R}^d$,~for~every~$x\hspace{-0.1em}\in\hspace{-0.1em} \mathbb{R}^d$, be defined by
    \begin{align}\label{eq:limsup_case2.1}
        n_\varepsilon(x)\coloneqq \nabla (\omega_\varepsilon\ast \widehat{\textrm{dist}}(\cdot,\partial\Omega))(x)\coloneqq\int_{B_\varepsilon^d(x)}{\omega_\varepsilon(x-y)\nabla \widehat{\textrm{dist}}(y,\partial\Omega)\,\mathrm{d}y}\,,
    \end{align}
    where 
	$(\omega_\varepsilon)_{\varepsilon\in (0,\varepsilon_0)}\subseteq
	C^\infty_0(\mathbb{R}^d)$ is a family of Friedrichs mollifiers, for every
	$\varepsilon\in  (0,\varepsilon_0)$~and~${x\in  \mathbb{R}^d}$, defined by
	$\omega_\varepsilon(x)\coloneqq \varepsilon^{-d}\omega(\varepsilon^{-1}x)$, where $\omega\in C^\infty_c(\mathbb{R}^d)$ is a radially symmetric mollification kernel  such that $\omega\ge 0$ in $\mathbb{R}^d$, $\textup{supp}\,\omega\subseteq B_1^d(0)$, and $\|\omega\|_{1,\mathbb{R}^d} =1$.\enlargethispage{3mm}

    By means of the mollified outward unit normal vector field \eqref{eq:limsup_case2.1}, 
    denoting by $k\in (C^{0,1}(\Sigma^{\varepsilon_0}_{I}))^d$ and $\mathtt{d}\in C^{0,1}(\Sigma^{\varepsilon_0}_{I})$ the not relabelled extensions of $k\in (C^{0,1}(\Gamma_{I}))^d$ and $\mathtt{d}\in C^{0,1}(\Gamma_{I})$, respectively,  for every $x\hspace{-0.15em}=\hspace{-0.15em}  s+t n(s)\hspace{-0.175em}\in \hspace{-0.175em} \Sigma^{\varepsilon_0}_{I}$, where $s\hspace{-0.175em}\in\hspace{-0.175em}  \Gamma_I$ and $t\hspace{-0.175em}\in\hspace{-0.175em} [0,\varepsilon_0 \mathtt{d}(s))$, defined~by~${k(x)\hspace{-0.175em}\coloneqq\hspace{-0.175em} k(s)}$~and~${\mathtt{d}(x)\hspace{-0.175em}\coloneqq\hspace{-0.175em} \mathtt{d}(s)}$, 
    we next introduce the \textit{mollified distribution function (in  direction of $n$)} 
    \begin{align}\label{eq:limsup_case2.2}
        \smash{\widetilde{\mathtt{d}}_{\varepsilon}\coloneqq \max\{0,k\cdot n_\varepsilon\}\mathtt{d}\in C^{0,1}(\Sigma_{I}^{\varepsilon})\,,}
    \end{align} 
    which  
    satisfies  
    \begin{align*}
         \nabla \widetilde{\mathtt{d}}_{\varepsilon}=\mathtt{d}\{n_{\varepsilon}\nabla k+\nabla n_{\varepsilon}k\}\chi_{\{k\cdot n_{\varepsilon}\ge 0\}}+ \max\{0,k\cdot n_\varepsilon\}\nabla \mathtt{d}\quad \text{ a.e.\ in }\Sigma_{I}^{\varepsilon}\,,
    \end{align*}
    so that, due to $\vert n_{\varepsilon}\vert,\vert k\vert,\varepsilon\vert \nabla n_{\varepsilon}\vert\leq 1$ a.e.\ in $\mathbb{R}^d$, there holds
    \begin{align*}
   \begin{aligned}
        \vert \widetilde{\mathtt{d}}_{\varepsilon}\vert&\leq \vert\mathtt{d}\vert &&\quad \text{ a.e.\ in }\Sigma_{I}^{\varepsilon}\,,\\
        \vert \nabla \widetilde{\mathtt{d}}_{\varepsilon}\vert
        &\leq  \vert\mathtt{d}\vert \{\vert \nabla k\vert+\smash{\tfrac{1}{\varepsilon}}\}+\vert \nabla \mathtt{d}\vert
       &&\quad  \text{ a.e.\ in }\Sigma_{I}^{\varepsilon}\,,
         \end{aligned}
    \end{align*}
    and, thus,  there exists a constant $c_{n}>0$, independent of $\varepsilon\in (0,\varepsilon_0)$, such that 
    \begin{subequations}\label{eq:limsup_case2.4}
    \begin{align}\label{eq:limsup_case2.4.1}
        \|\widetilde{\mathtt{d}}_{\varepsilon}\|_{\infty,\Sigma_{I}^{\varepsilon}}&\leq \|\mathtt{d}\|_{\infty,\Gamma_{I}}\,, \\
    \label{eq:limsup_case2.4.2}
        \|\nabla \widetilde{\mathtt{d}}_{\varepsilon}\|_{\infty,\smash{\Sigma_{I}^{\varepsilon}}}&\leq \smash{\tfrac{c_{n}}{\varepsilon}}\,.
    \end{align}
    \end{subequations}

     Next, let the cut-off function $\varphi_\varepsilon\colon \mathbb{R}^d\to [0,1]$, for every $x\in \mathbb{R}^d$, be defined by
        \begin{align}\label{eq:limsup_case2.5}
            \varphi_\varepsilon(x)\coloneqq\begin{cases}
                1-\smash{\frac{\beta \widetilde{\mathtt{d}}_{\varepsilon}(x)\psi_\varepsilon(x)}{\varepsilon(1+\beta \widetilde{\mathtt{d}}_{\varepsilon}(x))\mathtt{d}(x)}}&\text{ if }x\in \Sigma^{\varepsilon}_{I}\,,\\[-0.25mm]
                1&\text{ if }x\in \overline{\Omega}\,,\\[-0.25mm]
                0&\text{ else}\,.
            \end{cases}
        \end{align} 
       By construction, the cut-off function \eqref{eq:limsup_case2.5} satisfies $\varphi_\varepsilon|_{\Omega^{\varepsilon}_{I}}\in H^{1,\infty}(\Omega^{\varepsilon}_{I})$ with
        \begin{subequations}\label{eq:limsup_case2.6}
        \begin{alignat}{2}\label{eq:limsup_case2.6.1}
            0\leq \varphi_\varepsilon&\leq 1 &&\quad\text{ in }\mathbb{R}^d\,,\\[-0.5mm]
            \varphi_\varepsilon&=1&&\quad\text{ in }\overline{\Omega}\,,\label{eq:limsup_case2.6.2}\\[-0.5mm]
            \varphi_\varepsilon&=\smash{\tfrac{1}{1+\beta \widetilde{\mathtt{d}}_{\varepsilon}}}&&\quad\text{ on }\Gamma_{I}^{\varepsilon}\,.\label{eq:limsup_case2.6.3}
        \end{alignat}
        \end{subequations}
        Moreover, we have that\vspace{-0.5mm}
        \begin{align*}
        \left.\begin{aligned}
            \nabla \varphi_{\varepsilon}&=-\smash{\tfrac{\beta}{\varepsilon(1+\beta \widetilde{\mathtt{d}}_{\varepsilon})^2\mathtt{d}^2}}\big\{\{\psi_{\varepsilon}\nabla \widetilde{\mathtt{d}}_{\varepsilon}+\widetilde{\mathtt{d}}_{\varepsilon}\nabla \psi_{\varepsilon}\}(1+\beta \widetilde{\mathtt{d}}_{\varepsilon})\mathtt{d}\\[-1mm]&\qquad\qquad\qquad\quad- \widetilde{\mathtt{d}}_{\varepsilon}\psi_{\varepsilon}\{\beta \mathtt{d}\nabla\widetilde{\mathtt{d}}_{\varepsilon}+(1+\beta \widetilde{\mathtt{d}}_{\varepsilon})\nabla\mathtt{d}\}\big\}
            \\[-1mm]&=-\smash{\tfrac{\beta}{\varepsilon(1+\beta \widetilde{\mathtt{d}}_{\varepsilon})\mathtt{d}}}\widetilde{\mathtt{d}}_{\varepsilon}\nabla \psi_{\varepsilon}-\smash{\tfrac{\beta}{\varepsilon(1+\beta \widetilde{\mathtt{d}}_{\varepsilon})^2\mathtt{d}^2}}\psi_{\varepsilon}\{ \mathtt{d}\nabla\widetilde{\mathtt{d}}_{\varepsilon}-\widetilde{\mathtt{d}}_{\varepsilon}(1+\beta \widetilde{\mathtt{d}}_{\varepsilon})\nabla\mathtt{d}\}
            \end{aligned}\quad\right\}\quad\textup{ a.e.\ in }\Sigma_{I}^{\varepsilon}\,,
        \end{align*}
        so that, using \eqref{lem:transversal_distance_function.1} with remainders $R_\varepsilon\in (L^\infty(\Sigma_{I}^{\varepsilon}))^d$, $\varepsilon\in (0,\varepsilon_0)$, as in Lemma \ref{lem:transversal_distance_function},\vspace{-0.5mm}
        \begin{align*}
            \left.\begin{aligned} 
            \vert\nabla \varphi_{\varepsilon}\vert&\leq \smash{\tfrac{\beta}{\varepsilon(1+\beta \widetilde{\mathtt{d}}_{\varepsilon})}}\smash{\tfrac{\widetilde{\mathtt{d}}_{\varepsilon}}{\widetilde{\mathtt{d}}}}\{1+\varepsilon\|R_\varepsilon\|_{\infty,\Sigma_{I}^{\varepsilon}}\}+\smash{\tfrac{\beta \|\mathtt{d}\|_{\infty,\Gamma_I}}{(1+\beta \widetilde{\mathtt{d}}_{\varepsilon})\mathtt{d}}}\{\vert \nabla\widetilde{\mathtt{d}}_{\varepsilon}\vert+\tfrac{\widetilde{\mathtt{d}}_{\varepsilon}}{\mathtt{d}}\vert\nabla\mathtt{d}\vert\}
            \\&\leq \tfrac{\beta}{\varepsilon(1+\beta \widetilde{\mathtt{d}}_{\varepsilon})}\tfrac{\widetilde{\mathtt{d}}_{\varepsilon}}{\widetilde{\mathtt{d}}}\{1+\varepsilon\|R_\varepsilon\|_{\infty,\Sigma_{I}^{\varepsilon}}\}+\tfrac{\beta \|\mathtt{d}\|_{\infty,\Gamma_I}^2}{\mathtt{d}_{\min}^2}\{\vert \nabla\widetilde{\mathtt{d}}_{\varepsilon}\vert+\vert\nabla\mathtt{d}\vert\}
            \end{aligned}\quad\right\}\quad\textup{ a.e.\ in }\Sigma_{I}^{\varepsilon}\,,
        \end{align*}
        and, thus, by the convexity of the function $(t\mapsto t^2)\colon \mathbb{R}\to \mathbb{R}$, for fixed, but arbitrary $\delta\in (0,1)$,\vspace{-0.5mm}
        \begin{align}\label{eq:limsup_case2.7}
            \vert\nabla \varphi_{\varepsilon}\vert^2\leq \tfrac{1}{\delta}\tfrac{\beta^2}{\varepsilon^2(1+\beta \widetilde{\mathtt{d}}_{\varepsilon})^2}\tfrac{\widetilde{\mathtt{d}}_{\varepsilon}^2}{\widetilde{\mathtt{d}}^2}\{1+\varepsilon\|R_\varepsilon\|_{\infty,\Sigma_{I}^{\varepsilon}}\}^2+\tfrac{1}{1-\delta}\tfrac{\beta^2 \|\mathtt{d}\|_{\infty,\Gamma_I}^4}{\mathtt{d}_{\min}^4}\{\vert\nabla\widetilde{\mathtt{d}}_{\varepsilon}\vert+\vert\nabla\mathtt{d}\vert\}^2\,.
        \end{align}
 Recall that since   $\Gamma_I$ is piece-wise flat, one can~find $\delta\in C^{0,1}(\Gamma_{I})$  such~that~for~every~${\ell=1,\ldots,L}$, one has that  $\delta>0$ in $\Gamma_{I}^{\ell} $, $\delta=0$ on $\partial\Gamma_{I}^{\ell} $, $\mathcal{N}_\delta(\Gamma_I^{\ell})\cap \mathrm{Med}(\partial\Omega)=\emptyset$, and $\mathcal{N}_\delta(\Gamma_I^{\ell})\cap \mathcal{N}_\delta(\Gamma_I^{\ell'})=\emptyset$~if~${\ell\neq \ell'}$. On the basis of the latter, for possibly smaller (but not relabelled) $\varepsilon_0>0$,   for every $\ell=1,\ldots,L$, one can find 
       a subset $\widetilde{\Gamma}_I^{\varepsilon,\ell}\subseteq \Gamma_I^{\ell}$ such that (\textit{cf}.\ Figure \ref{fig:limsup_case2})\vspace{-0.5mm}
       \begin{subequations}\label{eq:limsup_case2.8}
        \begin{align}\label{eq:limsup_case2.8.1}
           \sup_{\varepsilon\in (0,\varepsilon_0)}{\big\{\tfrac{1}{\varepsilon}\vert  \Gamma_I^{\ell}\setminus \widetilde{\Gamma}_I^{\varepsilon,\ell}\vert\big\}}<+\infty\,,&\\[-1mm]\label{eq:limsup_case2.8.2}
           \widetilde{\Sigma}_{I}^{\varepsilon,\ell}+B_\varepsilon^d(0)\subseteq \mathcal{N}_\delta(\Gamma_I^{\ell})\,,&\quad\text{where}\quad
            \widetilde{\Sigma}_{I}^{\varepsilon,\ell}\coloneqq \big\{s+t k(s)\mid s\in \widetilde{\Gamma}_I^{\varepsilon,\ell}\,,\; t\in [0,\varepsilon \mathtt{d}(s))\big\}\,. 
        \end{align} 
        \end{subequations}
      Then, due to \eqref{eq:limsup_case2.8.2} and \eqref{eq:grad_dist_hat}, we have that $\smash{\nabla\widehat{\textrm{dist}}(\cdot,\partial\Omega)}=n_{\ell}$~in~$\smash{\widetilde{\Sigma}_{I}^{\varepsilon,\ell}}+B_\varepsilon^d(0)$~for~all~$\ell=1,\ldots ,L$, so that $n_\varepsilon=n_\ell$ in $\smash{\widetilde{\Sigma}_{I}^{\varepsilon,\ell}}$~for~all~$\ell=1,\ldots ,L$, which implies that\enlargethispage{5mm}\vspace{-1mm}
        \begin{align}\widetilde{\mathtt{d}}_{\varepsilon}=\widetilde{\mathtt{d}}\quad \text{ in }\widetilde{\Sigma}_{I}^{\varepsilon}\coloneqq\bigcup_{\ell=1}^L{\widetilde{\Sigma}_{I}^{\varepsilon,\ell}}\,.\label{eq:limsup_case2.10}
        \end{align}\vspace{-5mm}

        \begin{figure}[H]
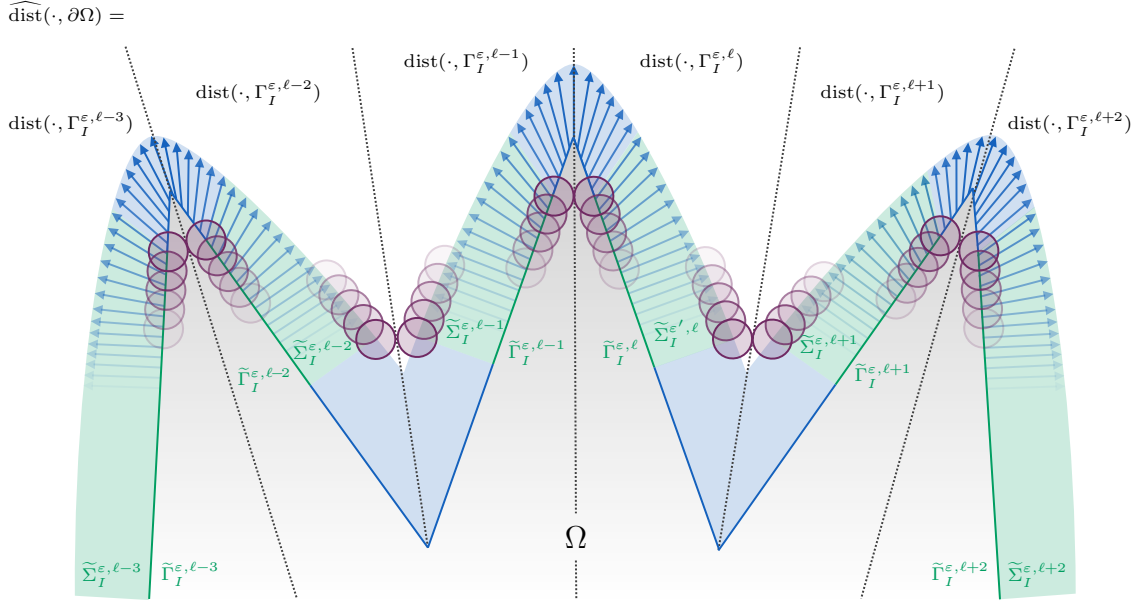

            \centering

  
\tikzset {_gvbm46y6a/.code = {\pgfsetadditionalshadetransform{ \pgftransformshift{\pgfpoint{0 bp } { 0 bp }  }  \pgftransformrotate{-90 }  \pgftransformscale{2 }  }}}
\pgfdeclarehorizontalshading{_wua27sj6k}{150bp}{rgb(0bp)=(0.89,0.89,0.89);
rgb(37.5bp)=(0.89,0.89,0.89);
rgb(37.5bp)=(0.86,0.86,0.86);
rgb(37.5bp)=(0.82,0.82,0.82);
rgb(62.5bp)=(1,1,1);
rgb(100bp)=(1,1,1)}
\tikzset{every picture/.style={line width=0.75pt}} 

\hspace*{-2.5mm}

            \caption{Schematic diagram of the construction in the proof of Lemma \ref{lem:limsup_case2}: \textit{(a)} local boundary parts $\widetilde{\Gamma}_I^{\varepsilon',\ell}$, $\ell=1,\ldots,L$, (\textcolor{shamrockgreen}{green lines}) (\textit{cf}.\ \eqref{eq:limsup_case2.8.1}) \textit{(b)}  local insulating layers $\widetilde{\Sigma}_{I}^{\varepsilon',\ell}$, $\ell=1,\ldots,L$, (\textcolor{shamrockgreen!50!white}{light green areas}) (\textit{cf}.\ \eqref{eq:limsup_case2.8.2}); \textit{(c)} medial axis $\textup{Med}(\partial\Omega)$ (\textcolor{darkgray}{dashed dark gray lines}); \textit{(d)} translations of the ball $B_\varepsilon^d(0)$ (\textcolor{byzantium}{purple discs}).}
            \label{fig:limsup_case2}
        \end{figure}
        \newpage

        \noindent Since, due to \eqref{eq:limsup_case2.8.1}, the truncated $\Sigma_I^{\varepsilon}\setminus \widetilde{\Sigma}_{I}^{\varepsilon}$ insulating layer has thickness proportional to $\varepsilon$ in two directions,
        there exists a constant $c_{\Omega}>0$, which depends only on the Lipschitz regularity~of~$\Gamma_I$, such that
        \begin{align}\label{eq:limsup_case2.11}
            \vert \Sigma_I^{\varepsilon}\setminus \widetilde{\Sigma}_{I}^{\varepsilon}\vert\leq c_{\Omega}\varepsilon^2\,.
        \end{align}
       Next, we establish that for $d\leq 4$, there holds
       \begin{align}\label{eq:limsup_case2.12}
            \tfrac{1}{\varepsilon}\|\overline{v}-u_\infty\|_{\smash{\Sigma_{I}^{\varepsilon}\setminus\widetilde{\Sigma}_{I}^{\varepsilon}}}^2\to 0\quad (\varepsilon\to 0^+)\,.
       \end{align}
       To this end, we distinguish the cases $d\in \{3,4\}$ and $d=2$:

       $\bullet$  \emph{Case $d\in \{3,4\}$.} In this case, 
       due to $H^1(\Omega)\hookrightarrow L^{\smash{\frac{2d}{d-2}}}(\Omega)$ with  $\frac{1}{2}=\frac{d-2}{2d}+\frac{1}{d}$ and \eqref{eq:limsup_case2.11}, we find that
       \begin{align*}
           \tfrac{1}{\varepsilon}\|\overline{v}-u_\infty\|_{\smash{\Sigma_{I}^{\varepsilon}\setminus\widetilde{\Sigma}_{I}^{\varepsilon}}}^2
           &\leq \tfrac{1}{\varepsilon}\|\overline{v}-u_\infty\|_{\frac{2d}{d-2},\smash{\Sigma_{I}^{\varepsilon}\setminus\widetilde{\Sigma}_{I}^{\varepsilon}}}^2\vert \Sigma_{I}^{\varepsilon}\setminus\widetilde{\Sigma}_{I}^{\varepsilon}\vert^{\frac{2}{d}}
            \\&\leq c_{\Omega}^2\varepsilon^{\smash{\frac{4}{d}-1}}\|\overline{v}-u_\infty\|_{\frac{2d}{d-2},\smash{\Sigma_{I}^{\varepsilon}\setminus\widetilde{\Sigma}_{I}^{\varepsilon}}}^2
            \to 0\quad(\varepsilon\to0^+)\,.
        \end{align*}

      $\bullet$  \emph{Case $d=2$.} In this case, 
       due to $H^1(\Omega)\hookrightarrow L^s(\Omega)$ with $\frac{1}{2}=\frac{1}{s}+\frac{s-2}{s2}$ for all $s>2$ and \eqref{eq:limsup_case2.11}, for  $s\ge 4$, due to $\frac{2s-4}{s}\ge 1$, we find that
       \begin{align*}
           \tfrac{1}{\varepsilon}\|\overline{v}-u_\infty\|_{\smash{\Sigma_{I}^{\varepsilon}\setminus\widetilde{\Sigma}_{I}^{\varepsilon}}}^2
           &\leq \tfrac{1}{\varepsilon}\|\overline{v}-u_\infty\|_{s,\smash{\Sigma_{I}^{\varepsilon}\setminus\widetilde{\Sigma}_{I}^{\varepsilon}}}^2\vert \Sigma_{I}^{\varepsilon}\setminus\widetilde{\Sigma}_{I}^{\varepsilon}\vert^{\frac{s-2}{s}}
            \\&\leq c_{\Omega}^2\varepsilon^{\smash{\frac{2s-4}{s}-1}}\|\overline{v}-u_\infty\|_{s,\smash{\Sigma_{I}^{\varepsilon}\setminus\widetilde{\Sigma}_{I}^{\varepsilon}}}^2
            \to 0\quad (\varepsilon\to 0^+)\,.
        \end{align*}
        From \eqref{eq:limsup_case2.12} together with \eqref{eq:limsup_case2.4.2}, in turn,  we infer that  
        \begin{align}\label{eq:limsup_case2.13}
        \begin{aligned}
            \varepsilon\|\nabla \widetilde{\mathtt{d}}_{\varepsilon}(\overline{v}-u_\infty)\|_{\Sigma_{I}^{\varepsilon}}^2&=\varepsilon\|\nabla \widetilde{\mathtt{d}}(\overline{v}-u_\infty)\|_{\widetilde{\Sigma}_{I}^{\varepsilon}}^2+\varepsilon\|\nabla \widetilde{\mathtt{d}}_{\varepsilon}(\overline{v}-u_\infty)\|_{\smash{\Sigma_{I}^{\varepsilon}\setminus\widetilde{\Sigma}_{I}^{\varepsilon}}}^2
            \\&\leq \varepsilon\|\nabla \widetilde{\mathtt{d}}(\overline{v}-u_\infty)\|_{\widetilde{\Sigma}_{I}^{\varepsilon}}^2+\smash{\tfrac{c_n^2}{\varepsilon}}\|\overline{v}-u_\infty\|_{\smash{\Sigma_{I}^{\varepsilon}\setminus\widetilde{\Sigma}_{I}^{\varepsilon}}}^2
            \to 0\quad (\varepsilon\to 0^+)\,.
            \end{aligned}
        \end{align}  
       Next, let the desired recovery sequence
        $v_\varepsilon\in L^2(\mathbb{R}^d)$,  for a.e.\ $x\in \Omega$, be defined by
        \begin{align*}
             v_\varepsilon(x)
             \coloneqq\begin{cases}
                \overline{v}(x)\varphi_\varepsilon(x)+u_{\infty}(x) (1-\varphi_\varepsilon(x))&\text{ if }x\in \Omega^{\varepsilon}_{I}\,,\\
                v(x)&\text{ else}\,.
            \end{cases}
        \end{align*}
        which,  by construction and $\varphi_\varepsilon|_{\Omega^{\varepsilon}_{I}}\in H^{1,\infty}(\Omega^{\varepsilon}_{I})$ with \eqref{eq:limsup_case2.6.2},\eqref{eq:limsup_case2.6.3}, satisfies $v_\varepsilon|_{\Omega^{\varepsilon}_{I}}\in H^1(\Omega^{\varepsilon}_{I})$~with\vspace{-3.5mm}
        \begin{subequations} \label{eq:limsup_case2.14}
        \begin{alignat}{2}  
            v_\varepsilon&=v&&\quad\text{ a.e.\ in }\mathbb{R}^d\setminus \Sigma_{I}^{\varepsilon} \,,\label{eq:limsup_case2.14.1}\\ 
            v_\varepsilon&=u_D&&\quad\text{ a.e.\ on }\smash{\Gamma_D} \,,\label{eq:limsup_case2.14.2}\\
            v_{\varepsilon}-u_{\infty}&=\tfrac{1}{1+\beta \widetilde{\mathtt{d}}_{\varepsilon}}\{\overline{v}-u_{\infty}\}&&\quad\text{ a.e.\ on }\smash{\Gamma_{I}^{\varepsilon}} \,.\label{eq:limsup_case2.14.3}
        \end{alignat}
        \end{subequations}
        Moreover, by the convexity of $(t\mapsto t^2)\colon \mathbb{R}\to \mathbb{R}$ and \eqref{eq:limsup_case2.7}, for fixed, but arbitrary $\delta\in (0,1)$, we have that
        \begin{align}\label{eq:limsup_case2.15}
        \begin{aligned} 
            \vert \nabla v_{\varepsilon}\vert^2&\leq \tfrac{1}{\delta}\vert \nabla \varphi_{\varepsilon}(\overline{v}-u_{\infty})\vert^2+\tfrac{1}{1-\delta}\vert \varphi_{\varepsilon}\nabla \overline{v}+(1-\varphi_{\varepsilon})\nabla u_{\infty}\vert^2
            \\&\leq \tfrac{1}{\delta}\big\{\tfrac{1}{\delta}\tfrac{\beta^2}{\varepsilon^2(1+\beta\widetilde{\mathtt{d}}_{\varepsilon})^2}\tfrac{\widetilde{\mathtt{d}}_{\varepsilon}^2}{\widetilde{\mathtt{d}}^2}\{1+\varepsilon\|R_\varepsilon\|_{\infty,\Sigma_{I}^{\varepsilon}}\}^2+\tfrac{1}{1-\delta}\tfrac{\beta^2\|\mathtt{d}\|_{\infty,\Gamma_{I}}^4}{\mathtt{d}_{\min}^4}\{\vert\nabla \widetilde{\mathtt{d}}_{\varepsilon}\vert+\vert\nabla \mathtt{d}\vert\}^2\big\}\vert\overline{v}-u_{\infty}\vert^2\\&\quad+\smash{\tfrac{1}{1-\delta}\{\vert\nabla \overline{v}\vert+\vert\nabla u_{\infty}\vert\}^2}\,.
            \end{aligned}
        \end{align}
        In particular, due to \eqref{eq:limsup_case2.14.1}, $\vert \Sigma_{I}^{\varepsilon}\vert\to 0$ $(\varepsilon\to0^+)$, and $\vert v_{\varepsilon}\vert \leq \vert v\vert +\vert u_{\infty}\vert$ a.e.\ in $\mathbb{R}^d$ (\textit{cf}.\ \eqref{eq:limsup_case2.6.1}), Lebesgue's dominated convergence theorem yields that
        \begin{align*}
            v_\varepsilon\to v\quad \text{ in }L^2(\mathbb{R}^d)\quad (\varepsilon\to\smash{0^+})\,.
        \end{align*}
        In addition, as a direct consequence of \eqref{eq:limsup_case2.14.1},\eqref{eq:limsup_case2.14.2}, we obtain
        \begin{align}\label{eq:limsup_case2.16}
             \smash{\overline{E}}^{\mathtt{d}}_\varepsilon(v_\varepsilon)= \tfrac{\lambda}{2}\|\nabla v\|_{\Omega}^2-(f,v)_{\Omega}-\langle g,v\rangle_{\smash{H^{\smash{\frac{1}{2}}}(\Gamma_N)}}+\tfrac{\varepsilon}{2}\|\nabla v_{\varepsilon}\|_{\smash{\Sigma^{\varepsilon}_{I}}}^2+\tfrac{\beta}{2}\|v_\varepsilon-u_{\infty}\|_{\Gamma^{\varepsilon}_{I}}^2\,,
        \end{align}
        so that it is left to treat the limit superior of the last two terms on the right-hand side~of~\eqref{eq:limsup_case2.16}. For the latter, it is sufficient establish that  
        \begin{subequations}\label{eq:limsup_case2.17}
        \begin{align}\label{eq:limsup_case2.17.1}
             \limsup_{\varepsilon\to \smash{\smash{0^+}}}{\big\{\tfrac{\varepsilon}{2}\|\nabla v_{\varepsilon}\|_{\smash{\Sigma^{\varepsilon}_{I}}}^2\big\}}&\leq \tfrac{\beta}{2}\|(\beta \widetilde{\mathtt{d}})^{\smash{\frac{1}{2}}}(1+\beta \widetilde{\mathtt{d}})^{-1}\{v-u_{\infty}\}\|_{\Gamma_{I}}^2\,,\\
             \limsup_{\varepsilon\to \smash{0^+}}{\big\{\tfrac{\beta}{2}\|v_{\varepsilon}-u_{\infty}\|_{\Gamma^{\varepsilon}_{I}}^2\big\}}&\leq \tfrac{\beta}{2}\|(1+\beta \widetilde{\mathtt{d}})^{-1}\{v-u_{\infty}\}\|_{\Gamma_{I}}^2\,,\label{eq:limsup_case2.17.2}
        \end{align}
        \end{subequations} 
        which jointly imply that
        \begin{align}\label{eq:limsup_case2.18}
             \limsup_{\varepsilon\to \smash{0^+}}{\big\{\tfrac{\varepsilon}{2}\|\nabla v_{\varepsilon}\|_{\smash{\Sigma^{\varepsilon}_{I}}}^2+\tfrac{\beta}{2}\|v_{\varepsilon}-u_{\infty}\|_{\Gamma^{\varepsilon}_{I}}^2\big\}}\leq \tfrac{\beta}{2}\|(1+\beta \widetilde{\mathtt{d}})^{-\smash{\frac{1}{2}}}\{v-u_{\infty}\}\|_{\Gamma_{I}}^2\,.
        \end{align}
 
        Therefore, let us next establish the $\limsup$-estimates \eqref{eq:limsup_case2.17.1} and \eqref{eq:limsup_case2.17.2} separately:

        \emph{ad \eqref{eq:limsup_case2.17.1}.}  Using \eqref{eq:limsup_case2.15},
            Lemma \ref{lem:Lebesgue_boundary_limit}\eqref{lem:Lebesgue_boundary_limit.0}, and \eqref{eq:limsup_case2.14.1}, because $\delta\in (0,1)$ was~chosen~arbitrarily, 
            we find that
        \begin{align*}
            \begin{aligned} 
             \limsup_{\varepsilon\to \smash{0^+}}{\big\{\tfrac{\varepsilon}{2}\|\nabla v_{\varepsilon}\|_{\smash{\Sigma^{\varepsilon}_{I}}}^2\big\}}&\smash{\overset{\eqref{eq:limsup_case2.15}}{\leq}}  \limsup_{\varepsilon\to \smash{0^+}}{\big\{\tfrac{1}{2\varepsilon}\tfrac{1}{\delta^2}\|\beta(1+\beta\widetilde{\mathtt{d}}_{\varepsilon})^{-1}\widetilde{\mathtt{d}}_{\varepsilon}\widetilde{\mathtt{d}}^{-1}\{\overline{v}-u_{\infty}\}\|_{\smash{\Sigma^{\varepsilon}_{I}}}^2\big\}}\\[-1mm]&\quad\hspace*{1.5mm}+
             \limsup_{\varepsilon\to \smash{0^+}}{\big\{\tfrac{\varepsilon}{2}\tfrac{1}{\delta(1-\delta)}\tfrac{\beta^2\|\mathtt{d}\|_{\infty,\Gamma_{I}}^4}{\mathtt{d}_{\min}^4}\|\{\vert \nabla \widetilde{\mathtt{d}}_{\varepsilon}\vert+\vert\nabla \mathtt{d}\vert\}\{\overline{v}-u_{\infty}\}\|_{\smash{\Sigma^{\varepsilon}_{I}}}^2\big\}}
             \\&\quad\hspace*{1.5mm}+\limsup_{\varepsilon\to \smash{0^+}}{\big\{\tfrac{\varepsilon}{2}\tfrac{1}{1-\delta}\{\|\nabla \overline{v} \|_{\smash{\Sigma^{\varepsilon}_{I}}}+\|\nabla u_\infty\|_{\smash{\Sigma^{\varepsilon}_{I}}}\}^2\big\}} 
              \\&\hspace*{1.5mm}\leq \limsup_{\varepsilon\to \smash{0^+}}{\big\{\tfrac{1}{\delta^2}\tfrac{\beta}{2\varepsilon}
             \|\beta^{\smash{\frac{1}{2}}}(1+\beta \widetilde{\mathtt{d}})^{-1}\{ \overline{v}-u_{\infty}\}\|_{\smash{\Sigma^{\varepsilon}_{I}}}^2\big\}}
             \\&\hspace*{1mm}\smash{\overset{\eqref{lem:Lebesgue_boundary_limit.0}}{=}}\tfrac{1}{\delta^2}\tfrac{\beta}{2}\|(\beta\widetilde{\mathtt{d}})^{\smash{\frac{1}{2}}}(1+\beta \widetilde{\mathtt{d}})^{-1}\{v-u_{\infty}\}\|_{\Gamma_{I}}^2
             \\&\hspace*{1.5mm}\to \tfrac{\beta}{2}\|(\beta\widetilde{\mathtt{d}})^{\smash{\frac{1}{2}}}(1+\beta \widetilde{\mathtt{d}})^{-1}\{v-u_{\infty}\}\|_{\Gamma_{I}}^2\quad (\delta \to 1^-)\,,
             \end{aligned} 
        \end{align*} 
        where we used in the second inequality that, due to \eqref{eq:limsup_case2.10}, we have that
        \begin{align*}
             \|(1+\beta \widetilde{\mathtt{d}}_\varepsilon)^{-1}\widetilde{\mathtt{d}}_\varepsilon\widetilde{\mathtt{d}}^{-1}\{ \overline{v}-u_{\infty}\}\|_{\smash{\Sigma^{\varepsilon}_{I}}}&\leq  \|(1+\beta \widetilde{\mathtt{d}})^{-1}\{ \overline{v}-u_{\infty}\}\|_{\widetilde{\Sigma}^{\varepsilon}_{I}}\\&\quad+ \|(1+\beta \widetilde{\mathtt{d}}_\varepsilon)^{-1}\widetilde{\mathtt{d}}_\varepsilon\widetilde{\mathtt{d}}^{-1}\{ \overline{v}-u_{\infty}\}\|_{\smash{\Sigma^{\varepsilon}_{I}\setminus\widetilde{\Sigma}^{\varepsilon}_{I}}}\
             \\&\leq \|(1+\beta \widetilde{\mathtt{d}})^{-1}\{ \overline{v}-u_{\infty}\}\|_{\smash{\Sigma^{\varepsilon}_{I}}}+ \tfrac{\|\mathtt{d}\|_{\infty,\Gamma_{I}}}{\mathtt{d}_{\min}}\|\overline{v}-u_{\infty}\|_{\smash{\Sigma^{\varepsilon}_{I}\setminus\widetilde{\Sigma}^{\varepsilon}_{I}}}\,,
        \end{align*}
        together with \eqref{eq:limsup_case2.12}.

         \emph{ad \eqref{eq:limsup_case2.17.2}.} Using \eqref{eq:limsup_case2.14.3}, the approximative transformation formula (\textit{cf}.\ Lemma \ref{lem:approx_trans_formula2}),~and~that
         \begin{align*}
             \{\overline{v}-u_{\infty}\}(\cdot+\varepsilon\mathtt{d}k)\to \overline{v}-u_{\infty}=v-u_{\infty}\quad \text{ in }L^2(\Gamma_I)\quad (\varepsilon\to \smash{0^+})\,,
         \end{align*}
         which, similar to \eqref{eq:liminf_case1.6}, using Corollary \ref{lem:poincare_int}\eqref{lem:poincare_1}, follows from\enlargethispage{6mm}
        \begin{align*}
            \|\{\overline{v}-u_{\infty}\}(\cdot+\varepsilon \mathtt{d} k)-\{\overline{v}-u_{\infty}\}\|_{\smash{\Gamma_{I}}}^2\leq  \tfrac{\varepsilon\mathtt{d}_{\min}}{\kappa-\varepsilon\|\mathtt{d}\|_{\infty,\smash{\Gamma_{I}}}\|R_{\varepsilon}\|_{\infty,\smash{D_{I}^{\varepsilon}}}}\|\nabla\{ \overline{v}-u_{\infty}\}\|_{\smash{\Sigma_{I}^{\varepsilon}}}^2\to 0\quad (\varepsilon\to \smash{0^+})\,,
        \end{align*}
       setting $\smash{\widetilde{\Gamma}_{I}^{\varepsilon}\coloneqq\bigcup_{\ell=1}^L{\widetilde{\Gamma}_{I}^{\ell,\varepsilon}}}$ and using \eqref{eq:limsup_case2.11}, we find that
        \begin{align*}
            \begin{aligned} 
            \limsup_{\varepsilon\to \smash{0^+}}{\big\{\tfrac{\beta}{2}\|v_{\varepsilon}-u_{\infty}\|_{\Gamma^{\varepsilon}_{I}}^2\big\}}&= \limsup_{\varepsilon\to \smash{0^+}}{\big\{\tfrac{\beta}{2}\|(1+\beta \widetilde{\mathtt{d}}_{\varepsilon})^{-1}\{\overline{v}-u_{\infty}\}(\cdot+\varepsilon \mathtt{d} k)\|_{\Gamma_{I}}^2\big\}}
            \\&\leq \limsup_{\varepsilon\to \smash{0^+}}{\big\{\tfrac{\beta}{2}\|(1+\beta \widetilde{\mathtt{d}})^{-1}\{\overline{v}-u_{\infty}\}(\cdot+\varepsilon \mathtt{d} k)\|_{\widetilde{\Gamma}_{I}^{\varepsilon}}^2\big\}}
            \\[-0.5mm]&\quad+\limsup_{\varepsilon\to \smash{0^+}}{\big\{\tfrac{\beta}{2}\|\{\overline{v}-u_{\infty}\}(\cdot+\varepsilon \mathtt{d} k)\|_{\Gamma_{I}\setminus\widetilde{\Gamma}_{I}^{\varepsilon}}^2\big\}}
            \\&\leq \limsup_{\varepsilon\to \smash{0^+}}{\big\{\tfrac{\beta}{2}\|(1+\beta \widetilde{\mathtt{d}})^{-1}\{\overline{v}-u_{\infty}\}(\cdot+\varepsilon \mathtt{d} k)\|_{\Gamma_{I}}^2\big\}}
            \\[-0.5mm]&\quad+\limsup_{\varepsilon\to \smash{0^+}}{\big\{\tfrac{\beta}{2}\|\{\overline{v}-u_{\infty}\}(\cdot+\varepsilon \mathtt{d} k)-\{v-u_{\infty}\}\|_{\Gamma_{I}}^2\big\}}
            \\[-0.5mm]&\quad+\limsup_{\varepsilon\to \smash{0^+}}{\big\{\beta\|v-u_{\infty}\|_{\Gamma_{I}\setminus\widetilde{\Gamma}_{I}^{\varepsilon}}^2\big\}}
            \\&\leq \tfrac{\beta}{2}\big\|(1+\beta \widetilde{\mathtt{d}})^{-1}\{v-u_{\infty}\}\|_{\Gamma_{I}}^2\,.
            \end{aligned}
        \end{align*} 
        In summary, from \eqref{eq:limsup_case2.17.1} and \eqref{eq:limsup_case2.17.2}, it follows  \eqref{eq:limsup_case2.18}, which together~with~\eqref{eq:limsup_case2.16} confirms the claimed $\limsup$-estimate for the Case \hyperlink{Case 1}{1}. 
    \end{proof}\newpage
  
	{\setlength{\bibsep}{0pt plus 0.0ex}\small
		
		\bibliographystyle{aomplain}
		\bibliography{references}
		
	}
	
\end{document}